 \theoremstyle{plain}
\newtheorem{theorem}{Theorem}[section]
\newtheorem{lemma}[theorem]{Lemma}
\theoremstyle{definition}
\newtheorem{definition}[theorem]{Definition}
\newtheorem{corollary}[theorem]{Corollary}
\newtheorem{proposition}[theorem]{Proposition}
\newtheorem{problem}[theorem]{Problem}
\theoremstyle{remark}
\newtheorem{remark}[theorem]{Remark}
\numberwithin{equation}{section}
\begin{document}

\title[Note on some prescribed problems]
{An extension of prescribed problems on the conformal classes of complete metrics}


\author{Rirong Yuan} \address{School of Mathematics, South China University of Technology, Guangzhou 510641, China}
\email{yuanrr@scut.edu.cn}


\date{}

\dedicatory{}

\begin{abstract}
We solve prescribed problems for modified Schouten tensors in the conformal classes of smooth complete metrics, which extends the results obtained in prequel \cite{yuan-PUE1}. The key ingredient is to confirm the uniform ellipticity of operators under an   assumption, which is sharp as shown by  obstructions from topology and function theory.

\end{abstract}

\maketitle


  
  \section{Introduction}  

  Let $(M,g)$ be a smooth connected   Riemannian manifold of dimension $n$  with Levi-Civita connection $\nabla$. 
Let $\partial M$ denote the boundary of $M$,  $\bar M=M\cup\partial M$.  (Notice $\bar M=M$ if $\partial M=\emptyset$).  
Let  ${Ric}_g$ and   ${R}_g$ denote the  Ricci and scalar curvature of the  metric $g$, respectively. 
For $n\geq3$ we  
denote the modified Schouten tensor by   
\begin{equation}
	\begin{aligned}
		A_{{g}}^{\tau,\alpha}=\frac{\alpha}{n-2} \left({Ric}_{g}-\frac{\tau}{2(n-1)}   {R}_{g}\cdot {g}\right), \,\, \alpha=\pm1,  \mbox{  }\tau \in \mathbb{R}. \nonumber
	\end{aligned}
\end{equation}
When $\tau=\alpha=1$, it is the Schouten tensor 
$$A_g=\frac{1}{n-2} \left({Ric}_g-\frac{1}{2(n-1)}{R_g}\cdot g\right).$$ 

Given a smooth positive 
function $\psi$ on $M$, it is natural to consider the problem of finding
a smooth complete metric $\tilde{g}=e^{2{u}}g$ satisfying
\begin{equation}
	\label{main-equ1}
	\begin{aligned}
		f(\lambda(\tilde{g}^{-1}A_{\tilde{g}}^{\tau,\alpha}))=\psi.
	\end{aligned}
\end{equation} 
Here as in \cite{CNS3}, $f$ is a \textit{smooth},   \textit{symmetric}   function defined in an \textit{open},  \textit{symmetric} and \textit{convex} cone $\Gamma\subset\mathbb{R}^n$ with  vertex at the origin, $\partial \Gamma\neq \emptyset$, and
$$\Gamma_n:=\left\{\lambda=(\lambda_1,\cdots,\lambda_n)\in \mathbb{R}^n: \mbox{ each } \lambda_i>0\right\}\subseteq\Gamma.$$
 We call $\Gamma$ a type 1 cone if $(0,\cdots,0,1)\in\partial\Gamma$; otherwise $\Gamma$ is a type 2 cone.

The problem  was studied extensively. The most important case is the scalar curvature equation
  ($f=\sigma_1$), which is closely related to Yamabe problem proved by   Aubin \cite{Aubin1976},  Schouten \cite{Schoen1984} and Trudinger \cite{Trudinger1968}.
A singular  Yamabe problem was studied by Loewner-Nirenberg \cite{Loewner1974Nirenberg} on spheres, and further extended by  Aviles-McOwen  \cite{Aviles1988McOwen} to general closed Riemannian manifolds. 
In contrast with 
the resolution of
Yamabe problem on closed Riemannian manifolds, 
the 
complete noncompact version of
Yamabe problem 
is not always solvable as shown by
Jin \cite{Jin1988}.  Consequently, 
one could only expect  
the solvability of prescribed curvature problem in the conformal class of complete metrics
under proper additional assumptions. When imposing fairly strong restrictions to the asymptotic ratio of prescribed functions and
curvature 
of background manifolds, 
Aviles-McOwen \cite{Aviles1988McOwen2}  and Jin \cite{Jin1993} investigated prescribed scalar curvature equation on negatively curved complete noncompact Riemannian manifolds.

The fully nonlinear equation \eqref{main-equ1}  on closed manifolds has been investigated by many mathematicians over the past years since the work of \cite{Gursky2003Viaclovsky}, in which the assumption  $\tau<1$, $\alpha=-1$ plays important roles. When the background manifold is compact   with boundary, this is a fully nonlinear version of Loewner-Nirenberg problem. The 
case  $f=\sigma_k^{1/k}$ was studied by Guan \cite{Guan2008IMRN} and Gursky-Streets-Warren \cite{Gursky-Streets-Warren2011} for negative Ricci tensor, and by Li-Sheng \cite{Li2011Sheng}
for $\tau>n-1$, $\alpha=1$. 
When  $(M,g)$ is a complete noncompact manifold, the equation \eqref{main-equ1} with $f=\sigma_k^{1/k}$   was considered   by 
Fu-Sheng-Yuan \cite{FuShengYuan}  for $\alpha=-1$, $\tau<1$,  imposing certain restrictions to the asymptotic ratio of prescribed functions and
curvature 
of background manifolds.

Their results were extended by    \cite{yuan-PUE1}, in which the author introduced  $\kappa_\Gamma$ for $\Gamma $ 
\begin{equation}\label{kappa1-Gamma}	\begin{aligned}	 {\kappa}_{\Gamma}=\max \left\{k: ({\overbrace{0,\cdots,0}^{k-\mathrm{entries}}},
		{\overbrace{1,\cdots, 1}^{(n-k)-\mathrm{entries}}})\in \Gamma \right\}	\end{aligned}\end{equation}
	in order to measure the partial uniform ellipticity of fully nonlinear equations. 
	More precisely, the author solved the prescribed problems in the conformal class of smooth complete metrics under the   assumptions that 
\begin{equation}\label{concave}	\begin{aligned}
		f \text { is concave in } \Gamma,	
\end{aligned}\end{equation} 
\begin{equation}
	\label{homogeneous-1-buchong2}
	\begin{aligned}
		f >0 \mbox{ in }  \Gamma, \,\mbox{ } f  =0 \mbox{ on } \partial\Gamma,
	\end{aligned}
\end{equation}
\begin{equation}
	\label{homogeneous-1-mu}
	\begin{aligned}
		f(t\lambda)=t^\varsigma f(\lambda), \mbox{ } \forall \lambda\in\Gamma, 
		\mbox{  for some } 0<\varsigma \leq1,
	\end{aligned}
\end{equation} 
\begin{equation}
	\label{tau-alpha}
	\begin{cases}
		\tau<1 \,&\mbox{ if } \alpha=-1, \\ 
		\tau>1+ {(n-2)}/{(1+n\kappa_\Gamma\vartheta_{\Gamma})}\,&\mbox{ if } \alpha=1.
	\end{cases}
\end{equation}
Henceforth  
$\vartheta_\Gamma$ is given by 
 \eqref{theta1-gamma} below.

\begin{remark}
	A somewhat surprising fact to us is that the following condition is not required throughout this paper
	\begin{equation}
		\label{elliptic}
		\begin{aligned}
			f_i(\lambda):=\frac{\partial f}{\partial\lambda_i}(\lambda)>0 \mbox{ in } \Gamma, \mbox{ } \forall 1\leq i\leq n. \nonumber
		\end{aligned}
	\end{equation}
\end{remark}

The  subject  of this article is to extend the results.
To this end, for $\Gamma$ we denote $\varrho_\Gamma$  the constant with
\begin{equation}
	\label{varrho1-Gamma}
	\begin{aligned}
		(1,\cdots,1,1-\varrho_\Gamma)\in\partial \Gamma.
	\end{aligned}
\end{equation} 
In this paper we solve the prescribed problems, replacing the condition \eqref{tau-alpha} by 
\begin{equation}
	\label{tau-alpha-sharp}
	\begin{cases}
		\tau<1 \,&\mbox{ if } \alpha=-1,\\ 
		\tau>1+ (n-2)\varrho_\Gamma^{-1} \,&\mbox{ if } \alpha=1.
	\end{cases}
\end{equation} 
 The case $\tau=1$, $\alpha=-1$ is   allowed when $\Gamma$ is of type 2. Moreover, 
this is the optimal condition 
in the sense that the equation \eqref{main-equ1} is uniformly elliptic at any admissible metrics. 

\begin{remark}
	\label{RK1-1}
	 For $\Gamma=\Gamma_k$,  
	 $\varrho_{\Gamma_k}=\frac{n}{k}$. Moreover,  using Theorem \ref{yuan-k+1} we can check that $$\varrho_\Gamma\geq 1+n\kappa_\Gamma\vartheta_\Gamma\geq (1-\kappa_\Gamma\vartheta_\Gamma)^{-1}.$$
	 So the condition \eqref{tau-alpha-sharp} is broader than \eqref{tau-alpha}.
\end{remark}


The notions of admissible,  pseudo-admissible,  quasi-admissible, and maximal metrics are given  as in  the following:

\begin{definition}
	\label{def1-admissible}
	
	For the equation \eqref{main-equ1}, we say $g$ is \textit{admissible}   if   \begin{equation}\begin{aligned}\lambda(g^{-1}A_{{g}}^{\tau,\alpha})\in\Gamma \mbox{ in  }\bar M.  \nonumber
	\end{aligned}    \end{equation}  
	Similarly, we call $g$ a \textit{pseudo-admissible} metric if
	\begin{equation}\begin{aligned}
			\lambda(g^{-1}A_{g}^{\tau,\alpha}) \in \bar{\Gamma}   \mbox{ in } 
			\bar M. \nonumber
	\end{aligned}    \end{equation} 
	Meanwhile, 
	we say a metric $g$ is \textit{quasi-admissible} if  
	\begin{equation}\label{assump1-metric}\begin{aligned}	\lambda(g^{-1}A_{g}^{\tau,\alpha}) \in \bar{\Gamma} \mbox{ in } \bar M, 	
			\mbox{ and }
					\lambda(g^{-1}A_{g}^{\tau,\alpha}) \in \Gamma \mbox{ at some } p_0\in \bar M. \nonumber
			\end{aligned}\end{equation}
			Moreover, we say $\tilde{g}=e^{2u}g$ is a \textit{maximal} admissible metric to the equation \eqref{main-equ1}, 
			if
			$u\geq w$ in $M$ for any admissible conformal metric $e^{2w}g$ satisfying the same equation.

			Accordingly,  we have notions of \textit{admissible, pseudo-admissible, quasi-admissible} and \textit{maximal} solutions for more general fully nonlinear elliptic 	
			equations.
		\end{definition} 
	
	Building on Proposition \ref{key-lemma1}
	via partial uniform ellipticity, 
	together with the results 
	in prequel \cite{yuan-PUE1}, we obtain the following results.



The first result is to
 solve 
\eqref{main-equ1} on closed 
manifolds with quasi-admissible metric.
\begin{theorem}
	\label{thm2-metric}
Suppose $(f,\Gamma)$ satisfies \eqref{concave}, \eqref{homogeneous-1-buchong2}, \eqref{homogeneous-1-mu}, and assume
	$(\alpha,\tau)$ satisfies \eqref{tau-alpha-sharp}. 
	Assume that $M$ is a closed connected   manifold of dimension $n\geq 3$ 
	with a $C^2$ quasi-admissible Riemannian metric $g$.
	Then for any $0<\psi\in C^\infty(M)$,  there is a unique smooth conformal admissible metric   $\tilde{g} $ satisfying
	\eqref{main-equ1}.
\end{theorem}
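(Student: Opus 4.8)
The plan is to recast the conformal equation \eqref{main-equ1} on the closed manifold $M$ as a fully nonlinear PDE for the conformal factor $u$ and then run the continuity method, with the decisive new input being the partial uniform ellipticity encoded by Proposition~\ref{key-lemma1}. Recall that under $\tilde g = e^{2u}g$ the modified Schouten tensor transforms as
\begin{equation}
A_{\tilde g}^{\tau,\alpha} = A_g^{\tau,\alpha} - \alpha\,\nabla^2 u + \alpha\, du\otimes du - \frac{\alpha}{2}\Big(|\nabla u|_g^2 + \frac{\tau-1}{n-2}\,\Delta_g u\cdot\text{(trace term)}\Big)g, \nonumber
\end{equation}
so that $\lambda(\tilde g^{-1}A_{\tilde g}^{\tau,\alpha})$ is the eigenvalue vector of an expression affine in $\nabla^2 u$ and quadratic in $\nabla u$, rescaled by $e^{-2u}$. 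Because $f$ is homogeneous of degree $\varsigma$ by \eqref{homogeneous-1-mu}, the equation becomes $F(\nabla^2 u,\nabla u,u)=\psi e^{2\varsigma u}$ for a concave operator $F$ defined on the admissible set. The first step is therefore to write this out carefully and verify, using \eqref{tau-alpha-sharp} together with Proposition~\ref{key-lemma1}, that along any admissible path the linearized operator is \emph{uniformly} elliptic once we know a $C^0$ bound — this is exactly where the sharp threshold $\tau>1+(n-2)\varrho_\Gamma^{-1}$ (resp. the type-2 allowance at $\tau=1$, $\alpha=-1$) is needed, and it is the crux of the whole argument.

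Next I would set up the continuity method. Since $g$ is only quasi-admissible, not admissible, I expect the argument from \cite{yuan-PUE1} to be invoked: one perturbs $g$ to a genuinely admissible metric or equivalently solves a family of equations $f(\lambda(\tilde g^{-1}A_{\tilde g}^{\tau,\alpha})) = \psi_t$ with $\psi_t$ interpolating from a solvable model (for which the existence of an admissible subsolution is clear near the point $p_0$ where strict admissibility holds) to the target $\psi$; the openness of the solvable set follows from the uniform ellipticity just established plus the implicit function theorem on $C^{2,\beta}$, using that the degree-$\varsigma$ homogeneity with $\varsigma\le1$ makes the zeroth-order term in $u$ have the correct sign for invertibility. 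The substantive work is the a priori estimates needed for closedness: (i) a $C^0$ estimate, obtained from the maximum principle applied at extrema of $u$ using $f>0$ in $\Gamma$, $f=0$ on $\partial\Gamma$, and the sign of $\tau-1$ relative to $\alpha$; (ii) a global gradient estimate $|\nabla u|\le C$; (iii) a global estimate on $\nabla^2 u$, reducing to an estimate on $\Delta u$ by admissibility and concavity in the standard Guan–Gursky–Streets–Warren–Li–Sheng manner; and (iv) Evans–Krylov plus Schauder to upgrade to $C^\infty$. Steps (i)–(iii) are where uniform ellipticity is used repeatedly — without it the second-order estimate degenerates — so Proposition~\ref{key-lemma1} is doing the heavy lifting throughout.

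The main obstacle, and the genuinely new point over \cite{yuan-PUE1}, is establishing the a priori $C^2$ estimate with constants depending only on $\|\psi\|_{C^2}$, $\inf\psi$, $(M,g)$, and the \emph{uniform} ellipticity constant coming from \eqref{tau-alpha-sharp} — rather than from the older, more restrictive \eqref{tau-alpha}. Concretely: one must show that the set of admissible metrics satisfying \eqref{main-equ1} stays in a region of the cone $\Gamma$ on which $\sum_i f_i(\lambda)$ is bounded below by a positive constant and, simultaneously, $f$ itself is bounded between positive constants — the sharp constraint $\varrho_\Gamma$ is precisely the quantity that guarantees the relevant eigenvalue vector is pushed into such a region once $\tau$ exceeds the threshold. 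I expect the proof to quote Proposition~\ref{key-lemma1} to get exactly this, then feed it into the maximum-principle arguments for the gradient and double-derivative bounds. Once all the estimates are in hand, closedness of the continuity parameter set follows, the method runs to $t=1$, and uniqueness is a consequence of the maximum principle (comparing two admissible solutions at the maximum of their difference and using strict monotonicity of $\psi e^{2\varsigma u}$ in $u$), completing the proof of Theorem~\ref{thm2-metric}.
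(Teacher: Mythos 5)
Your overall skeleton --- use Proposition~\ref{key-lemma1} to convert \eqref{main-equ1} into a uniformly elliptic equation and then run standard fully nonlinear elliptic theory --- does match the paper's strategy, but with two caveats. First, the paper does not redo the a priori estimates or the continuity method at all: once \eqref{tau-alpha-sharp} is in force, \eqref{main-equ1} is rewritten as the Schouten-tensor equation \eqref{mainequ-02-0} for the pair $(\tilde f,\tilde\Gamma)$ of \eqref{map1}--\eqref{def-f}, which satisfies \eqref{fully-uniform2}, and the existence/uniqueness machinery is quoted wholesale from \cite{yuan-PUE1}. So the ``genuinely new point'' is not a sharper $C^2$ estimate, as you suggest; the analytic content of \eqref{tau-alpha-sharp} is entirely the one-line observation that $\varrho<\varrho_\Gamma$ forces $(0,\dots,0,1)\in\tilde\Gamma$, i.e.\ $\tilde\Gamma$ is of type 2.

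The genuine gap is your treatment of the quasi-admissibility hypothesis. You say one ``perturbs $g$ to a genuinely admissible metric or equivalently solves a family $\psi_t$ \dots for which the existence of an admissible subsolution is clear near the point $p_0$.'' Neither half of this works as stated: deforming $\psi$ does nothing to the cone condition $\lambda(g^{-1}A^{\tau,\alpha}_g)\in\Gamma$, which concerns the background metric, and an admissible metric ``near $p_0$'' is useless --- the existence theory requires a \emph{global} admissible conformal metric on all of $M$. Producing one from a metric that is strictly admissible only at a single point is the actual new geometric step (Proposition~\ref{thm2-construction}): one takes a Morse function $w$, uses the homogeneity lemma to move all critical points into the ball $\overline{B_{r_0/2}(p_0)}$ where strict admissibility holds, and sets $\underline u=e^{Nv}$ with $N\gg1$; outside the ball the gradient term contributes eigenvalues proportional to $(1,\dots,1,1-\varrho)+e^{Nv}(\gamma,\dots,\gamma,\gamma+\varrho)$, which lies in $\Gamma$ precisely because $\varrho<\varrho_\Gamma$. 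Note that \eqref{tau-alpha-sharp} is therefore used \emph{twice} --- once for uniform ellipticity and once to make this construction close up --- and your proposal accounts for neither the construction nor this second use of the hypothesis.
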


  
  Next we solve a fully nonlinear version of the Loewner-Nirenberg problem.

 \begin{theorem}
	\label{existence1-compact}
	In addition to \eqref{concave}, \eqref{homogeneous-1-buchong2}, \eqref{homogeneous-1-mu} and  
	\eqref{tau-alpha-sharp}, we assume  
	$(M,g)$ is a compact  connected Riemannian manifold of dimension $n\geq 3$ with smooth boundary and supports a $C^2$  compact pseudo-admissible conformal metric.
	Then for any $0<\psi\in C^\infty(\bar M)$, there exists at least one smooth complete  
	metric $\tilde{g}=e^{2u}g$  
	satisfying \eqref{main-equ1}.  
\end{theorem}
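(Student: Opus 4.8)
\textbf{Proof proposal for Theorem \ref{existence1-compact}.}

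The plan is to produce the complete metric as a limit of solutions to a sequence of Dirichlet problems on $M$ with boundary data tending to $+\infty$, following the classical Loewner--Nirenberg scheme as adapted to the fully nonlinear setting in \cite{FuShengYuan} and \cite{yuan-PUE1}. First I would fix a defining function for $\partial M$ and, for each large integer $m$, solve the Dirichlet problem $f(\lambda(\tilde g_m^{-1}A_{\tilde g_m}^{\tau,\alpha}))=\psi$ in $M$ with $\tilde g_m = e^{2u_m}g$ and $u_m = m$ on $\partial M$. Solvability of each such boundary value problem is where the hypotheses \eqref{concave}, \eqref{homogeneous-1-buchong2}, \eqref{homogeneous-1-mu} and the existence of a $C^2$ pseudo-admissible conformal background metric enter: after conformally changing $g$ to the pseudo-admissible metric (which only shifts $u$ by a fixed smooth function), one has a natural admissible subsolution available near $\partial M$ from the barrier construction, and the continuity method together with the a priori estimates from the prequel closes the argument. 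The key new input over \cite{yuan-PUE1} is that, under the sharp condition \eqref{tau-alpha-sharp}, Proposition \ref{key-lemma1} guarantees uniform ellipticity of \eqref{main-equ1} at every admissible metric, which is exactly what is needed to run the second-order and gradient estimates uniformly in $m$.

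Next I would establish the two-sided barriers that make the limit exist and be complete. For the upper barrier one uses that $\psi>0$ is bounded below on $\bar M$ together with the type of the cone: when $\alpha=1$ and $\tau$ satisfies \eqref{tau-alpha-sharp}, or when $\alpha=-1$, the structure of $A_{e^{2u}g}^{\tau,\alpha}$ under conformal change produces a differential inequality forcing $u_m$ to stay below a fixed function blowing up like $-\log(\text{dist}(\cdot,\partial M))$ near the boundary; this is the standard Loewner--Nirenberg upper bound and it is independent of $m$. For the lower barrier, monotonicity in the boundary data (which follows from the comparison principle, itself a consequence of ellipticity and concavity) shows $u_m$ is nondecreasing in $m$, so $u_m \geq u_{m_0}$ for $m\geq m_0$ on any fixed compact subset; combined with the asymptotic blow-up rate this yields a lower bound of the same type near $\partial M$. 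Together these give locally uniform $C^0$ bounds on $M$ and control the blow-up rate of $u_m$ near $\partial M$.

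With the $C^0$ bounds in hand I would invoke the interior a priori estimates: the local gradient estimate and the local second-order estimate for \eqref{main-equ1}, valid on compact subsets of $M$ with constants depending only on the $C^0$ bound there, on $\psi$, on $g$, and on the ellipticity constant furnished by Proposition \ref{key-lemma1}. Here is precisely where the sharp hypothesis \eqref{tau-alpha-sharp} is essential: without uniform ellipticity these local estimates degenerate. Concavity of $f$ then gives interior $C^{2,\gamma}$ bounds via Evans--Krylov, and Schauder bootstrapping upgrades to $C^\infty_{loc}$. Passing to a subsequence, $u_m \to u$ in $C^\infty_{loc}(M)$, and $\tilde g = e^{2u}g$ solves \eqref{main-equ1} in $M$; the lower barrier forces $u\to+\infty$ at $\partial M$ fast enough that $\tilde g$ is complete, since the induced distance to the boundary becomes infinite.

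I expect the main obstacle to be the construction and verification of the barriers near $\partial M$ that are uniform in $m$ and compatible with the cone condition \eqref{tau-alpha-sharp} in both the $\alpha=1$ and $\alpha=-1$ (including $\tau=1$, type 2) cases simultaneously --- one must track how the conformal transformation law of the modified Schouten tensor interacts with the eigenvalue cone $\Gamma$ and with $\varrho_\Gamma$ so that the candidate functions $\pm C\log(\text{dist})$ are genuinely super/subsolutions up to the boundary. The rest, namely the continuity method, the comparison principle, and the interior estimates, is by now standard given the results quoted from the prequel and Proposition \ref{key-lemma1}.
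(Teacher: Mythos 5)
Your overall scheme (exhaustion by Dirichlet problems with boundary data $m\to\infty$, barriers, comparison, interior estimates, Evans--Krylov) is the machinery of the prequel, but it is not the route this paper takes for Theorem \ref{existence1-compact}. The paper does no new PDE analysis here: under \eqref{tau-alpha-sharp} it rewrites \eqref{main-equ1} algebraically, via the substitution \eqref{map1}--\eqref{def-f} with $\varrho=\frac{n-2}{\tau-1}$, as the uniformly elliptic Schouten-tensor equation \eqref{mainequ-02-0}, where uniform ellipticity of $(\tilde f,\tilde\Gamma)$ is exactly Proposition \ref{key-lemma1} (the one-line observation that $\varrho<\varrho_\Gamma$ means $(1,\dots,1,1-\varrho)\in\Gamma$, i.e.\ $(0,\dots,0,1)\in\tilde\Gamma$, so $\tilde\Gamma$ is of type 2 and Lemma \ref{lemma5.11} applies). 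It then quotes Theorem \ref{existence1-compact-2} from \cite{yuan-PUE1} wholesale, so that the only remaining task is to produce an admissible conformal metric for the transformed problem. Your plan to rerun the whole Loewner--Nirenberg approximation directly for $A^{\tau,\alpha}_{\tilde g}$ would, if completed, reprove the prequel's theorem rather than use it; that is a much longer road, and your appeal to "a priori estimates from the prequel" is really an implicit appeal to the same reduction.

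The concrete gap is the step you dispose of with "one has a natural admissible subsolution available near $\partial M$ from the barrier construction." The hypothesis only provides a \emph{pseudo-admissible} metric, i.e.\ $\lambda(g^{-1}A^{\tau,\alpha}_g)\in\bar\Gamma$ on $\bar M$, possibly lying on $\partial\Gamma$ everywhere; no strict subsolution, global or near $\partial M$, comes for free from this, and without one neither the continuity method for the approximating Dirichlet problems nor the application of Theorem \ref{existence1-compact-2} can start. The paper devotes Proposition \ref{thm3-construction} (via Lemma \ref{lemma-diff-topologuy} and the computation \eqref{key3-2}--\eqref{key4} in Proposition \ref{thm2-construction}) to exactly this upgrade: one takes a Morse function $v$ on $\bar M$ without critical points, sets $\underline u=e^{Nv}$ with $N\gg1$, and checks that the gradient terms push $\lambda(g^{-1}V[\underline u])$ strictly into $\Gamma$, using that $(1,\dots,1,1-\varrho)\in\Gamma$ precisely because $\varrho<\varrho_\Gamma$. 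This is one of the two genuinely new ingredients of the proof (the other being Proposition \ref{key-lemma1}), and your proposal needs to supply it rather than assume it.
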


The assumption of pseudo-admissible metric can be further removed in some context.
\begin{theorem} 
	\label{existence1-compact-construction}
	In Theorem \ref{existence1-compact} the assumption on the existence of a compact  pseudo-admissible conformal metric can be removed 
	if $(\alpha,\tau)$ further satisfies 
		\begin{equation}
		\label{tau-alpha-4}
		\begin{cases}
			\tau\leq  2-\frac{2}{\varrho_\Gamma} \,& \mbox{ if } \alpha=-1,\\
			\tau\geq  2  \,& \mbox{ if } \alpha=1. 
		\end{cases}
	\end{equation}
	
\end{theorem}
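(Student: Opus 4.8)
The plan is to bootstrap Theorem~\ref{existence1-compact-construction} from Theorem~\ref{existence1-compact} by \emph{constructing}, from the given background metric $g$, an explicit conformal metric $\hat g=e^{2\phi}g$ that is compact pseudo-admissible, and then invoking the previous theorem. The natural Ansatz is $\phi = -\log\big(\epsilon^{-1}r\big)$ near $\partial M$ (equivalently $e^{2\phi}=\epsilon^2 r^{-2}g$), where $r$ is the distance to $\partial M$, interpolated smoothly to a constant in the interior with a large parameter; here the ``Loewner--Nirenberg'' model $r^{-2}g$ is the source of the near-boundary behavior. One computes the transformation law: under $\hat g=e^{2\phi}g$,
\begin{equation}
	\label{transf-law-plan}
	\Ric_{\hat g}=\Ric_g-(n-2)\big(\nabla^2\phi-d\phi\otimes d\phi\big)-\big(\Delta\phi+(n-2)|\nabla\phi|^2\big)g,
\end{equation}
and the analogous law for $R_{\hat g}$, so that the modified Schouten tensor $A_{\hat g}^{\tau,\alpha}$ becomes $\alpha$ times a combination of $\nabla^2\phi$, $d\phi\otimes d\phi$, $|\nabla\phi|^2 g$, and lower-order terms from $g$. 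For $\phi=-\log(\epsilon^{-1}r)$ one has $d\phi=-r^{-1}dr$ and $\nabla^2\phi = r^{-1}\nabla^2 r + r^{-2}\,dr\otimes dr$, so the dominant term as $r\to 0$ is of order $r^{-2}$, and its eigenvalues (with respect to $\hat g = \epsilon^2 r^{-2}g$, so the $r^{-2}$ scalings cancel) converge to $\epsilon^{-2}$-multiples of $(1,\dots,1,1-\varrho_\Gamma\cdot\tfrac{?}{?})$-type vectors whose location relative to $\Gamma$ is governed exactly by $\varrho_\Gamma$.

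The key step is the sign/cone computation showing the model metric lands in $\bar\Gamma$ precisely under the stated hypothesis \eqref{tau-alpha-4}. Write $\mu=\lambda\big(\hat g^{-1}A_{\hat g}^{\tau,\alpha}\big)$. For the exact Loewner--Nirenberg metric on a space form the eigenvalues of $\hat g^{-1}\big(\Ric_{\hat g}-\tfrac{\tau}{2(n-1)}R_{\hat g}\hat g\big)$ are constant; in the general case they are a leading constant vector plus $O(r)$ perturbation from $\Ric_g$, $\nabla^2 r$. One checks that this leading vector, up to positive scaling by $\tfrac{\alpha}{n-2}$, equals (for $\alpha=-1$) something of the form $c\,(1,\dots,1)+$ a multiple of the normal-direction basis vector, and the condition for it to lie in $\bar\Gamma_n\subseteq\bar\Gamma$ — respectively to lie in $\bar\Gamma$ using the definition \eqref{varrho1-Gamma} of $\varrho_\Gamma$ — reduces to the inequality on $\tau$. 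Concretely, $\tau\le 2-2/\varrho_\Gamma$ when $\alpha=-1$ is what makes the normal eigenvalue small enough that the vector $(1,\dots,1,1-\varrho_\Gamma)\in\partial\Gamma$ is not crossed; $\tau\ge 2$ when $\alpha=1$ plays the dual role (and makes $A_{\hat g}^{\tau,\alpha}$ nonnegative enough that $\mu\in\bar\Gamma_n$ outright). Away from $\partial M$, where $\phi$ is taken to be a large negative constant, $e^{2\phi}g$ has $\Ric$ and $R$ equal to those of $g$ rescaled, but the eigenvalue vector of $\hat g^{-1}A_{\hat g}^{\tau,\alpha}$ is unchanged by constant rescaling; so in the interior I must instead arrange pseudo-admissibility by a different device — either by first shrinking to a collar neighborhood of $\partial M$ and noting that on the compact core one can add a large multiple of $-\log$ of a defining-type function, or (cleaner) by using a smooth function $\phi$ that is $-\log(\epsilon^{-1}r)$ near $\partial M$ and such that $-\nabla^2\phi$ dominates on all of $M$ once $\epsilon$ is small and the $C^2$-norm of the interpolation is controlled; the standard trick is $\phi = -\log\big(\epsilon^{-1}(r+\delta\chi)\big)$ with $\chi$ a cutoff, making $\nabla^2\phi$ negative-definite-dominant everywhere. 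Then $\mu\in\bar\Gamma_n\subseteq\bar\Gamma$ (for $\alpha=-1$ after checking the normal direction via \eqref{tau-alpha-4}; for $\alpha=1$ directly), so $\hat g$ is a $C^2$ compact pseudo-admissible conformal metric.

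Having produced $\hat g$, I apply Theorem~\ref{existence1-compact} with $g$ replaced by $\hat g$ (legitimate since pseudo-admissibility is a conformal-class notion and $\hat g$ is in the same conformal class as the original $g$): for any $0<\psi\in C^\infty(\bar M)$ there is a smooth complete metric $\tilde g = e^{2u}\hat g = e^{2(u+\phi)}g$ solving \eqref{main-equ1}, which is exactly the desired conclusion. The main obstacle I anticipate is the second paragraph's cone bookkeeping — tracking signs through the transformation law \eqref{transf-law-plan} for both values of $\alpha$, verifying that the $r\to 0$ limit of the eigenvalue vector is controlled uniformly (so the $O(r)$ perturbation from the general background does not push $\mu$ out of $\bar\Gamma$ for small $r$), and handling the interior interpolation so that $\|\phi\|_{C^2}$ stays bounded while keeping $-\nabla^2\phi$ dominant; the constants $2-2/\varrho_\Gamma$ and $2$ in \eqref{tau-alpha-4} should fall out as precisely the thresholds for these inequalities, and the compatibility with \eqref{tau-alpha-sharp} (so that $\hat g$ is even pseudo-admissible in the relevant cone) needs to be checked, using Remark~\ref{RK1-1} and the inequality $\varrho_\Gamma\ge 1+n\kappa_\Gamma\vartheta_\Gamma$.
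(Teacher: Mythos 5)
There is a genuine gap, on two counts. First, your bootstrap is incompatible with the hypothesis you are feeding back into Theorem \ref{existence1-compact}: the metric $\hat g=\epsilon^2r^{-2}g$ near $\partial M$ blows up at the boundary, so it is a \emph{complete} metric on the interior rather than a \emph{compact} pseudo-admissible conformal metric on $\bar M$, which is what Theorem \ref{existence1-compact} requires; you therefore cannot ``invoke the previous theorem'' with this $\hat g$. Second, the ``key cone computation'' you defer does not produce the thresholds in \eqref{tau-alpha-4}. For $u=-\log(\epsilon^{-1}r)$ one has $\nabla^2u=-r^{-1}\nabla^2r+r^{-2}dr\otimes dr$ and $du\otimes du=r^{-2}dr\otimes dr$, so in \eqref{conformal-formula1} the two rank-one $O(r^{-2})$ terms $-\alpha\nabla^2u$ and $\alpha\,du\otimes du$ cancel exactly, and the leading term of $A^{\tau,\alpha}_{\hat g}$ is the isotropic tensor $\frac{\alpha(n\tau-2n+2)}{2(n-2)}r^{-2}g$. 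Hence admissibility of the Loewner--Nirenberg model near $\partial M$ reduces to the sign of $\alpha(n\tau-2n+2)$, which already follows from \eqref{tau-alpha-sharp}; neither $\varrho_\Gamma$ nor \eqref{tau-alpha-4} enters there, contrary to your expectation that the constants $2-2/\varrho_\Gamma$ and $2$ ``fall out'' of the boundary analysis. Condition \eqref{tau-alpha-4} is needed for the \emph{global} construction, which you leave as a vague ``different device''; making $-\nabla^2\phi$ ``negative-definite-dominant everywhere'' is not achievable on a general compact manifold with boundary, and is not the mechanism that works here.

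What the paper actually does (Proposition \ref{lemma5-main} and the computation following it) is a gradient-dominant, not Hessian-dominant, construction: by Lemma \ref{lemma-diff-topologuy} take $v\geq0$ with $|\nabla v|^2\geq a_0>0$ on all of $\bar M$ and set $\underline u=e^{Nv}$. For $N\gg1$ the first-order terms in $V[\underline u]$ dominate, and by \eqref{key3} their eigenvalue vector is $|\nabla v|^2\left[(1,\dots,1,1-\varrho)+e^{Nv}(\gamma,\dots,\gamma,\gamma+\varrho)\right]$ with $\gamma,\varrho$ as in \eqref{beta-gamma-A-3}. Since $(1,\dots,1,1-\varrho)\in\Gamma$, admissibility follows once $(\gamma,\dots,\gamma,\gamma+\varrho)\in\bar\Gamma$, i.e.\ condition \eqref{key-condition1-construct}; and the explicit formulas $\gamma=\frac{(n-2)(\tau-2)}{2(\tau-1)}$, $\gamma+\varrho=\frac{\tau(n-2)}{2(\tau-1)}$ show this holds exactly when $\tau\geq2$ (for $\alpha=1$) or $\tau\leq 2-\frac{2}{\varrho_\Gamma}$ (for $\alpha=-1$). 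This is precisely where \eqref{tau-alpha-4} is used, and it yields an admissible metric on the compact $\bar M$, after which one concludes as in Theorem \ref{existence1-compact}. To repair your argument, replace the Loewner--Nirenberg ansatz by this construction.
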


\begin{remark}
  In the case  $\kappa_\Gamma\leq n-3$,   we may verify  condition \eqref{tau-alpha-4} for $\alpha=1$ under the assumption \eqref{tau-alpha-sharp}. See Corollary \ref{coro1-verify}.
  \end{remark}

Finally we address a   fully nonlinear version of Yamabe problem 
 on a complete noncompact 
manifold, under the asymptotic assumption at infinity:
 There exists a \textit{complete pseudo-admissible} 
metric $\underline{g}=e^{2\underline{u}}g$ 
with 
\begin{equation}
	\label{key-assum1}
	\begin{aligned}
		{f(\lambda(\underline{g}^{-1}A_{\underline{g}}^{\tau,\alpha}))}   \geq  \Lambda_0 {\psi} \mbox{ holds uniformly in $M\setminus K_0$.}
	\end{aligned}
\end{equation}
Here $\Lambda_0$ is a uniform positive constant, and $K_0$ is a compact subset of $M$.

\begin{theorem}
	\label{thm1}
	Let $(M,g)$ be a complete noncompact Riemannian manifold of dimension $n\geq 3$. 
	Suppose $(f,\Gamma)$ satisfies   \eqref{concave}, \eqref{homogeneous-1-buchong2} and \eqref{homogeneous-1-mu}.
	Let $(\alpha,\tau)$ obey  \eqref{tau-alpha-sharp}.
	Given a positive smooth function $\psi$, we assume that
	$(M,g)$ carries a  $C^2$ complete pseudo-admissible conformal metric subject to \eqref{key-assum1}.
	Then there  
	exists 
	a unique smooth complete maximal conformal  admissible metric 
	satisfying 
	\eqref{main-equ1}. 
\end{theorem}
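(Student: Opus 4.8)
The plan is to reduce the problem to a Dirichlet problem on an exhaustion of $M$ and pass to the limit, using the partial uniform ellipticity furnished by Proposition \ref{key-lemma1} together with condition \eqref{tau-alpha-sharp}, exactly in the spirit of the prequel \cite{yuan-PUE1}. First I would fix a smooth exhaustion $K_0\subset\Omega_1\Subset\Omega_2\Subset\cdots$ of $M$ by relatively compact domains with smooth boundary. On each $\Omega_m$ I would solve the Dirichlet problem for \eqref{main-equ1} with boundary data forcing $u\to+\infty$ near $\partial\Omega_m$; the solvability of this compact-with-boundary problem is precisely what Theorem \ref{existence1-compact} (applied with the pseudo-admissible metric $\underline g$ restricted to $\bar\Omega_m$) provides, after checking that $\underline g|_{\bar\Omega_m}$ is a legitimate pseudo-admissible metric and that $\psi|_{\bar\Omega_m}>0$. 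This yields a sequence $u_m$ of admissible solutions on $\Omega_m$.

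The core analytic content is the a priori estimates that make the limiting procedure work. I would establish, on each fixed compact $K\Subset M$ and for all $m$ large, uniform $C^0$, $C^1$ and $C^2$ bounds for $u_m$ independent of $m$. The $C^0$ bound has two sides: the upper bound comes from comparison with the large boundary data together with the structure conditions \eqref{homogeneous-1-buchong2}--\eqref{homogeneous-1-mu}, which give a global barrier; the lower bound on $K$ uses the asymptotic hypothesis \eqref{key-assum1} — the complete pseudo-admissible metric $\underline g$ with $f(\lambda(\underline g^{-1}A_{\underline g}^{\tau,\alpha}))\ge\Lambda_0\psi$ on $M\setminus K_0$ serves as a subsolution near infinity, so the comparison principle forces $u_m\ge\underline u-C$ there, and interior considerations propagate this to $K$. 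Once $C^0$ is in hand, the gradient and second-order estimates follow from the now-standard machinery for these conformal equations, where the crucial point is that under \eqref{tau-alpha-sharp} the linearized operator is \emph{uniformly elliptic} at admissible metrics (this is exactly the assertion emphasized after \eqref{tau-alpha-sharp} and the content of Proposition \ref{key-lemma1} via $\varrho_\Gamma$); uniform ellipticity is what upgrades the concavity-based estimates to genuine Evans--Krylov $C^{2,\beta}$ bounds and then, by Schauder, to $C^\infty_{\mathrm{loc}}$ bounds on $K$.

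With locally uniform $C^\infty$ bounds, a diagonal/Arzelà--Ascoli argument extracts a subsequence $u_m\to u$ in $C^\infty_{\mathrm{loc}}(M)$, and $u$ solves \eqref{main-equ1} on all of $M$. Completeness of $\tilde g=e^{2u}g$ is obtained from the lower barrier: near infinity $u\ge\underline u-C$ and $\underline g=e^{2\underline u}g$ is complete, so $\tilde g$ is complete as well. For the \emph{maximal} property I would argue that if $e^{2w}g$ is any admissible solution then $w\le u_m+o(1)$ on $\Omega_m$ by the comparison principle (the large boundary values of the approximating solutions dominate $w$ near $\partial\Omega_m$), hence $w\le u$ in the limit; uniqueness of the maximal solution is then immediate. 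The main obstacle I anticipate is the lower $C^0$ bound together with completeness: one must leverage \eqref{key-assum1} delicately, since without the lower barrier near infinity the limit could degenerate (the metric could fail to be complete, or $u_m$ could drift to $-\infty$), and this is exactly where the hypothesis \eqref{key-assum1} is indispensable; the second, more technical obstacle is verifying that \eqref{tau-alpha-sharp} really does deliver uniform ellipticity with constants independent of the approximation parameter $m$, which is the reason the sharp constant $\varrho_\Gamma$ rather than the cruder quantity in \eqref{tau-alpha} is needed.
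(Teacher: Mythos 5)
Your proposal re-derives from scratch the exhaustion-plus-estimates machinery, whereas the paper's proof of Theorem \ref{thm1} is a short reduction: (i) Proposition \ref{thm4-construction} upgrades the given \emph{pseudo-admissible} metric to a genuinely \emph{admissible} complete conformal metric $\hat g$ satisfying $f(\lambda(\hat g^{-1}A^{\tau,\alpha}_{\hat g}))\geq\Lambda_1\psi$ on all of $M$; (ii) the linear change of variables \eqref{map1}--\eqref{def-f} converts \eqref{main-equ1} into the Schouten-tensor equation \eqref{mainequ-02-0}, which by Proposition \ref{key-lemma1} is \emph{fully} uniformly elliptic under \eqref{tau-alpha-sharp}; (iii) Theorem \ref{thm1-shouten} of the prequel is then quoted verbatim to produce the unique maximal complete admissible solution. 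Your outline is essentially a sketch of how that prequel theorem is itself proved, so as a strategy it is legitimate, but it duplicates work the paper deliberately black-boxes; what the paper's route buys is that all PDE analysis is confined to the already-established uniformly elliptic Schouten-tensor setting.

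There are two concrete gaps. First, you never address the passage from pseudo-admissible to admissible. Condition \eqref{key-assum1} together with \eqref{homogeneous-1-buchong2} forces $\underline g$ to be admissible only on $M\setminus K_0$; on the compact core $K_0$ the eigenvalues may lie on $\partial\Gamma$, where the equation degenerates, so $\underline g$ cannot directly serve as the global subsolution/lower barrier your comparison arguments need. The paper repairs this with the Morse-function construction of Proposition \ref{thm4-construction} (gluing a perturbation $e^{Nh}$ supported near $K_0$ while preserving completeness and the asymptotic inequality); your phrase ``interior considerations propagate this to $K$'' papers over exactly this step. Second, your appeals to the comparison principle (for the lower bound, for maximality, and for uniqueness) implicitly assume ellipticity of the original operator $f(\lambda(\tilde g^{-1}A^{\tau,\alpha}_{\tilde g}))$; but the paper expressly does \emph{not} assume $f_i>0$ in $\Gamma$, so comparison is only available after the reformulation \eqref{mainequ-02-0}, where Proposition \ref{key-lemma1} supplies uniform ellipticity. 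You invoke Proposition \ref{key-lemma1} for the Evans--Krylov step, but the reduction must be performed \emph{before} any barrier or uniqueness argument, not only at the regularity stage.
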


The paper is organized as follows. In Section \ref{sec2-prelimi} we summarize some notations, formulas, and results on Morse functions and partial uniform ellipticity. Also we present some existence results of fully nonlinear uniformly elliptic equations for Schouten tensor.
In Section \ref{sec3-proof} we complete the proof by constructing operators of uniform ellipticity under the assumption \eqref{tau-alpha-sharp}. 
In Section \ref{sec1-topo} we present some obstructions 
to indicate  that the assumption on $\tau$ we impose is sharp. 
In Section \ref{sec1-cones} we briefly discuss the cones and then prove some properties for  $\varrho_\Gamma$ and $\kappa_\Gamma$.

\medskip

\section{Preliminaries}
\label{sec2-prelimi}

\subsection{Notations and Formulas}

Given a cone $\Gamma$, we denote $\Gamma_\infty$ 
 the projection of $\Gamma$ to the subspace of former $n-1$ subscripts, that is   \begin{equation}
	\label{construct1-Gamma-infty}
	\begin{aligned}
		\Gamma_\infty:=\{(\lambda_{1}, \cdots, \lambda_{n-1}): (\lambda_{1}, \cdots, \lambda_{n-1},\lambda_n) \in \Gamma\}.
	\end{aligned}
\end{equation}

Let $e_1,...,e_n$ be a local frame on $M$.  
Denote
$$  \langle X,Y\rangle=g(X,Y),\,\  g_{ij}= \langle e_i,e_j\rangle,\,\   \{g^{ij} \} =  \{g_{ij} \}^{-1}.$$
Under Levi-Civita connection  of $(M,g)$, $\nabla_{e_i}e_j=\Gamma_{ij}^k e_k$, and $\Gamma_{ij}^k$ denote the Christoffel symbols.  
For simplicity we write 
$$\nabla_i=\nabla_{e_i}, \nabla_{ij}=\nabla_i\nabla_j-\Gamma_{ij}^k\nabla_k, 
\nabla_{ijk}=\nabla_i\nabla_{jk}-\Gamma_{ij}^l\nabla_{lk}-\Gamma^l_{ik}\nabla_{jl}, \mbox{ etc}.$$

Under the conformal change
$\tilde{g}=e^{2u}g$, one has  
 (see e.g. \cite{Besse1987} or \cite{Gursky2003Viaclovsky})
\begin{equation}
\label{conformal-formula1}
\begin{aligned}
A_{\tilde{g}}^{\tau,\alpha}
= A_{g}^{\tau,\alpha}
+\frac{\alpha(\tau-1)}{n-2}\Delta u g-\alpha  \nabla^2 u
+\frac{\alpha(\tau-2)}{2}|\nabla u|^2 g
+\alpha  du\otimes du. 
\end{aligned}
\end{equation}
 In particular, the Schouten tensor obeys
 \begin{equation}\label{conformal-formula2}\begin{aligned}A_{\tilde{g}} = A_{g} -\nabla^2 u	-\frac{1}{2}|\nabla u|^2 g+ du\otimes du. \end{aligned}\end{equation}
Throughout this paper, $\Delta u$, $\nabla^2 u$ and $\nabla u$ are the Laplacian, Hessian and gradient of $u$ with respect to $g$, respectively.
For simplicity,  we denote
\begin{equation}
\label{beta-gamma-A2}
\begin{aligned}
V[u]=\Delta u g -\varrho\nabla^2 u+\gamma |\nabla u|^2 g +\varrho du\otimes du+A,   
\end{aligned} 
\end{equation} 
\begin{equation}
\label{beta-gamma-A-3}
\begin{aligned}
\varrho=\frac{n-2}{\tau-1}, \mbox{ }
\gamma=\frac{(\tau-2)(n-2)}{2(\tau-1)}, \mbox{ } 
A=\frac{n-2}{\alpha(\tau-1)} A_{g}^{\tau,\alpha}.  
\end{aligned}
\end{equation}
Here  $V[u]=\frac{n-2}{\alpha(\tau-1)} A^{\tau,\alpha}_{\tilde{g}}, \mbox{ } \tilde{g}=e^{2u}g$.
Moreover, by the strightforward computation
\begin{equation}
	\label{computation1-add}
	\begin{aligned}
		V[\underline{u}+w]=\,&
		V[w]+\Delta \underline{u} g -\varrho\nabla^2 \underline{u}+\gamma |\nabla \underline{u}|^2 g +\varrho d\underline{u}\otimes d\underline{u} \\
		\,&+ 2\gamma  \langle \nabla w,\nabla \underline{u}\rangle g+\varrho (d\underline{u}\otimes dw+dw\otimes d\underline{u}). 
	\end{aligned}
\end{equation}

We summarize more useful notion. 
Let $\mathrm{Sec}_g$ stand for the sectional curvature of $g$, let
$G_g$ be the Einstein tensor 
$$G_g=Ric_g-\frac{R_g}{2} g.$$

In dimension three the Einstein tensor is closely related to sectional curvature.  
\begin{proposition}
	\label{prop1-einstein-sectional}
	Fix $x\in M^3$,  let $\Sigma\subset T_xM$ be a tangent $2$-plane,   $\vec{\bf n}\in T_xM$ the unit normal vector to $\Sigma$, then  
	\begin{equation} \label{sectional-einstein} \begin{aligned}
			G_g(\vec{\bf n},\vec{\bf n})=-\mathrm{Sec}_g(\Sigma). 
	\end{aligned}  \end{equation} 
\end{proposition}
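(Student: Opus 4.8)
The plan is to reduce everything to a computation in a single well-chosen orthonormal frame at the point $x$. First I would pick an orthonormal basis $e_1,e_2,e_3$ of $T_xM$ adapted to the given data, with $e_3=\vec{\bf n}$ and $\Sigma=\mathrm{span}\{e_1,e_2\}$; write $K_{ij}=\langle R_g(e_i,e_j)e_j,e_i\rangle$ for the sectional curvature of the coordinate $2$-plane $\mathrm{span}\{e_i,e_j\}$, so that $\mathrm{Sec}_g(\Sigma)=K_{12}$.

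The next step is to expand the Ricci and scalar curvatures in this frame, using only the definition of Ricci curvature as a trace of the Riemann tensor together with the standard symmetries: in dimension three this gives $Ric_g(e_3,e_3)=K_{13}+K_{23}$ and $R_g=\sum_{i=1}^{3} Ric_g(e_i,e_i)=2(K_{12}+K_{13}+K_{23})$. Substituting into $G_g=Ric_g-\tfrac12 R_g\,g$ and evaluating the quadratic form at $\vec{\bf n}=e_3$ then yields
\[
G_g(\vec{\bf n},\vec{\bf n})=Ric_g(e_3,e_3)-\tfrac12 R_g=(K_{13}+K_{23})-(K_{12}+K_{13}+K_{23})=-K_{12},
\]
which is exactly $-\mathrm{Sec}_g(\Sigma)$, as claimed. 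Since neither side depends on the choice of the remaining frame vectors, the identity is well defined.

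There is essentially no serious obstacle here; the statement is a three-dimensional coincidence. The only points that require a little care are fixing the sign and normalization conventions for $R_g$ and $\mathrm{Sec}_g$ consistently with the rest of the paper, and observing that it is precisely $\dim M=3$ that makes the two sectional curvatures appearing in $Ric_g(e_3,e_3)$ cancel against their occurrences inside $\tfrac12 R_g$, leaving only $-K_{12}$. In higher dimensions the analogous trace identity would retain extra sectional-curvature terms, so this is genuinely a low-dimensional phenomenon.
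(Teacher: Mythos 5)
Your computation is correct and is the standard trace identity the paper itself relies on (the paper states the proposition without proof, deferring to the observation in Gursky--Streets--Warren): with $Ric_g(e_3,e_3)=K_{13}+K_{23}$ and $R_g=2(K_{12}+K_{13}+K_{23})$ one gets $G_g(e_3,e_3)=-K_{12}$ exactly as you write. Nothing is missing, and your closing remark correctly identifies that the cancellation is special to dimension three.
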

\begin{remark}
	This formula motivates 
	Gursky-Streets-Warren  \cite{Gursky-Streets-Warren2010}  
	to prove that any Riemannian 3-manifold  with smooth boundary admits a complete conformal metric of ``almost negative"  
	curvature, via 
	a Monge-Amp\`ere type equation. 
\end{remark}

\subsection{Some result on Morse function}

The following lemma asserts that any compact manifold with boundary carries a Morse function without any critical point.    

\begin{lemma}
	\label{lemma-diff-topologuy}
	Let $(M,g)$
	be a compact connected Riemannian 
	manifold of dimension $n\geq 2$ with smooth boundary. Then there is a smooth function $v$ without any critical points, that is $d v\neq 0$.
\end{lemma}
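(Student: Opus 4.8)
The plan is to take a Morse function on $M$ adapted to the boundary and then remove every one of its critical points by pushing them out through $\partial M$. First I would fix a Morse function $f\colon M\to[0,1]$ with $f^{-1}(1)=\partial M$, with $f<1$ on the interior $\mathring M$, with $df\neq0$ along $\partial M$, and with pairwise distinct critical values; then all critical points $p_1,\dots,p_m$ of $f$ lie in $\mathring M$, and we may assume $m\geq1$ since otherwise $f$ already has no critical point. Since $M$ is connected and $\partial M\neq\emptyset$, the next step is to choose pairwise disjoint smoothly embedded arcs $\gamma_j\colon[0,1]\to M$ with $\gamma_j(0)=p_j$, $\gamma_j(1)\in\partial M$, with $\gamma_j\bigl((0,1]\bigr)$ disjoint from $\{p_1,\dots,p_m\}$, with $\gamma_j$ transverse to $\partial M$ only at $\gamma_j(1)$, and with $f\circ\gamma_j$ strictly monotone; such arcs are produced by flowing along a gradient-like vector field out of the (un)stable manifolds of the $p_j$, perturbing $f$ slightly if needed so that each such flow line reaches $\partial M$, and general position makes the finitely many arcs disjoint because $\dim M\geq2$. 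Finally I would fix pairwise disjoint tubular neighbourhoods $U_j$ of $\gamma_j$, each a half-ball meeting $\partial M$ in a disk around $\gamma_j(1)$.

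The key step is then local: inside each $U_j$ I would replace $f$ by a function $f_j$ that coincides with $f$ outside a slightly smaller tube around $\gamma_j$ and has no critical point in $U_j$. Concretely, in coordinates with $U_j\cong(-1,2)\times D^{n-1}$, $\gamma_j=(-1,1]\times\{0\}$, $p_j$ the origin, and $U_j\cap\partial M$ the part of the boundary near $\{1\}\times\partial D^{n-1}$, the function $f$ is already $t$-monotone (hence critical-point-free) near $\{-1\}\times D^{n-1}$ and near the side walls away from $\gamma_j(1)$, and one interpolates $f$ across $U_j$ through a $t$-monotone model that carries the critical behaviour of $f$ out of $U_j$ through the boundary disk around $\gamma_j(1)$. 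Setting $v=f$ on $M\setminus\bigcup_jU_j$ and $v=f_j$ on each $U_j$ then gives a well-defined smooth function on $M$ with $dv\neq0$ everywhere: on each $U_j$ because $f_j$ is a submersion, and off $\bigcup_jU_j$ because there $v=f$ while all critical points of $f$ lie in $\bigcup_jU_j$.

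I expect the main obstacle to be precisely this local step, namely the lemma that a single nondegenerate interior critical point lying on an arc to $\partial M$ can be cancelled inside a tubular neighbourhood of that arc without creating new critical points and without changing $f$ near the outer part of that neighbourhood's boundary -- informally, that a critical point can be slid through the boundary -- together with the normalization of $f$ near the arc and the care required at the corner where the tube meets $\partial M$; the construction of the monotone escape arcs in the previous step is the other place where the hypotheses $M$ connected and $\partial M\neq\emptyset$ are genuinely used. Alternatively, one can bypass the Morse argument by invoking the $h$-principle for submersions of open manifolds: the interior $\mathring M$ is homotopy equivalent to a CW complex of dimension at most $n-1$, so $T^{*}\mathring M$ carries a nowhere-vanishing section, which may be prescribed to equal $ds$ on a collar $\partial M\times(0,\epsilon)$ of the end (with $s$ the collar coordinate); the Gromov--Phillips theorem then yields a submersion $v\colon\mathring M\to\mathbb{R}$ that equals $s$ near that end, and extending $v$ by $0$ over $\partial M$ gives the desired critical-point-free function on all of $M$.
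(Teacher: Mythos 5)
Your first strategy---push each interior critical point out through $\partial M$ along an embedded arc and kill it in a tubular neighbourhood of that arc---is the right idea and is essentially a hands-on version of what the paper does, but the step you yourself flag as ``the main obstacle'' is exactly the step that is missing: you never construct the local model $f_j$ on the half-ball $U_j$ that agrees with $f$ near $\partial U_j\setminus\partial M$ and has no critical points, you only assert that one can ``interpolate $f$ across $U_j$ through a $t$-monotone model.'' That assertion is the entire content of the lemma localized to a half-ball, so as written the argument is incomplete precisely where it matters. The paper avoids all local analysis with a global device: form the double $X$ of $M$, take a Morse function $w$ on $X$, and use Milnor's homogeneity lemma to produce a diffeomorphism $h$ of $X$, isotopic to the identity, carrying every critical point of $w$ that lies in $\bar M$ to a prescribed point of $X\setminus\bar M$ while fixing the remaining ones; then $v=w\circ h^{-1}\big|_{\bar M}$ has no critical points. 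If you want to rescue your tube-by-tube version, the cleanest way to do the local step is exactly this: work in the double and pull back by a diffeotopy supported near the arc, rather than trying to write down an interpolation.

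Your alternative h-principle argument contains a genuine error. You cannot in general prescribe the nowhere-vanishing section of $T^{*}\mathring M$ (hence the submersion $v$) to equal $ds$ on a full collar of $\partial M$: if $v$ agrees with the collar coordinate near all of $\partial M$, then $\nabla v$ is nowhere tangent to $\partial M$, and Poincar\'e--Hopf for manifolds with boundary forces the zero set of $\nabla v$ to have total index $\pm\chi(M)$. For $M=D^{n}$, or any $M$ with $\chi(M)\neq 0$, this is nonzero, so such a $v$ must have a critical point; concretely, no smooth function on $D^{2}$ equal to $1-|x|^{2}$ near $\partial D^{2}$ is free of critical points. The obstruction you are implicitly trivializing is the relative Euler class in $H^{n}(M,\partial M)$, which is \emph{not} killed by the fact that $\mathring M$ retracts onto an $(n-1)$-complex. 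Any correct proof must allow $dv$ to become tangent to $\partial M$ somewhere (equivalently, $v|_{\partial M}$ must be allowed critical points), which is what happens both in the paper's construction and in a corrected version of your first argument; and without any boundary control, the Phillips submersion on the open interior need not extend smoothly to $\bar M$ at all.
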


\begin{proof} 
	The construction  is more or less standard in differential topology.
	Let $X$ be the double of $M$. Let $w$ be a smooth Morse function on $X$ with the critical set $\{p_i\}_{i=1}^{m+k}$, among which $p_1,\cdots, p_m$ are all the critical points  being in $\bar M$. 
	Pick $q_1, \cdots, q_m\in X\setminus \bar M$ but not the critical point of $w$. By homogeneity lemma 
	(see \cite{Milnor-1997}), 
	one can find a diffeomorphism
	$h: X\to X$, which is smoothly isotopic to the identity, such that 
	\begin{itemize}
		\item $h(p_i)=q_i$, $1\leq  i\leq  m$.
		\item $h(p_i)=p_i$, $m+1\leq  i\leq  m+k$.
	\end{itemize}
	Then $v=w\circ h^{-1}\big|_{\bar M}$ is the desired 
	function.
	
\end{proof}

\subsection{On the nonlinear operators}

 Let $\kappa_\Gamma$ be as in \eqref{kappa1-Gamma}. 
As in \cite{yuan-PUE1} for $\Gamma$ we introduce 
$\vartheta_\Gamma$  as in the following:
\begin{equation}
	\label{theta1-gamma}
	\vartheta_\Gamma=
	\begin{cases}  
		1/n, \,& \Gamma=\Gamma_n,\\
\underset{(-\alpha_1,\cdots,-\alpha_{\kappa_\Gamma}, \alpha_{\kappa_\Gamma+1},\cdots, \alpha_n)\in \Gamma;\mbox{ } \alpha_i>0}{	\sup}\frac{\alpha_1/n}{\sum_{i=\kappa_\Gamma+1}^n \alpha_i-\sum_{i=2}^{\kappa_\Gamma}\alpha_i}, \,& \Gamma\neq\Gamma_n. 
	\end{cases} 
\end{equation}

 For any concave and symmetric function $f$ obeying 
   \begin{equation}	
 	\label{addistruc}	
 	\begin{aligned}
 	 \lim_{t\rightarrow+\infty} f(t\lambda)>f(\mu), \,\, \forall \lambda,\mbox{ }  \mu\in\Gamma, 
 	\end{aligned}
 \end{equation}
the author  \cite{yuan-PUE1}   proved the following result concerning partial uniform ellipticity.
\begin{theorem} [\cite{yuan-PUE1}]
	\label{yuan-k+1}
	Suppose \eqref{concave} and \eqref{addistruc} hold. Then for any $\lambda\in \Gamma$ with
	$\lambda_1 \leq \cdots \leq\lambda_n$,
	\begin{enumerate}
		\item[$(\bf 1)$]   $f_i(\lambda)\geq 0,   \mbox{ } \forall 1\leq i\leq n, $ 
		$\sum_{i=1}^n f_i(\lambda)>0.$
		\item[$(\bf 2)$]  $f_{{i}}(\lambda) \geq n    \vartheta_{\Gamma}f_1(\lambda) \geq \vartheta_{\Gamma} \sum_{j=1}^{n}f_j(\lambda),  \mbox{ } \forall  1\leq i\leq 1+ 	\kappa_{\Gamma}.$
	\end{enumerate}
 Moreover, the assertion of $(\kappa_\Gamma+1)$-uniform ellipticity
 cannot be improved.
	
\end{theorem}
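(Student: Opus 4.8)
The plan is to distill from concavity \eqref{concave} and the growth hypothesis \eqref{addistruc} two structural facts about the partial derivatives $f_i$, and then to deduce $(\mathbf 1)$ and $(\mathbf 2)$ from them by elementary manipulations that exploit the ordering of the $f_i$. First I would observe that for every $w\in\Gamma$ the function $t\mapsto f(tw)$ is concave on $(0,+\infty)$ and, by \eqref{addistruc}, has limit strictly greater than $f(w)$ as $t\to+\infty$; a concave function on a ray that is not eventually decreasing is non-decreasing, so $t\mapsto f(tw)$ is non-decreasing, and differentiating at $t=1$ gives the Euler-type inequality $\sum_{i=1}^n w_i f_i(w)\ge 0$. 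Next I would upgrade this to
\[
\sum_{i=1}^n v_i\,f_i(\lambda)\ \ge\ 0\qquad\text{for all }\lambda,v\in\Gamma .
\]
Indeed, concavity gives $f(tv)\le f(\lambda)+\sum_i(tv_i-\lambda_i)f_i(\lambda)$ for $t>0$, hence $\sum_i v_i f_i(\lambda)\ge t^{-1}\bigl(f(tv)-f(\lambda)+\sum_i\lambda_i f_i(\lambda)\bigr)\ge t^{-1}\bigl(f(tv)-f(\lambda)\bigr)$; since $t\mapsto f(tv)$ is non-decreasing (so $f(tv)\ge f(v)$ for $t\ge1$), the right-hand side has non-negative $\liminf$ as $t\to+\infty$, forcing the ($t$-independent) left-hand side to be non-negative. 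Both facts use only \eqref{concave} and \eqref{addistruc}, and in particular not the sign of $f$ nor \eqref{homogeneous-1-buchong2}, \eqref{homogeneous-1-mu}.

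With these in hand, $(\mathbf 1)$ is quick. Applying the second fact to $v=e_i+\varepsilon(1,\dots,1)\in\Gamma_n\subseteq\Gamma$ and letting $\varepsilon\to0^+$ yields $f_i(\lambda)\ge0$. Comparing $\lambda$ with the point $\lambda'$ obtained by interchanging its $i$th and $j$th coordinates, along the segment joining them, gives $(\lambda_j-\lambda_i)\bigl(f_i(\lambda)-f_j(\lambda)\bigr)\ge0$, so $\lambda_1\le\cdots\le\lambda_n$ forces $f_1(\lambda)\ge\cdots\ge f_n(\lambda)$; and $\sum_i f_i(\lambda)>0$, for if it vanished then every $f_i(\lambda)=0$, making $\lambda$ an interior critical point, hence a global maximum, of the concave $f$ on $\Gamma$, contradicting \eqref{addistruc}. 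For $(\mathbf 2)$, the inequality $n\vartheta_\Gamma f_1(\lambda)\ge\vartheta_\Gamma\sum_j f_j(\lambda)$ is immediate from $f_1(\lambda)\ge f_j(\lambda)$. The substance is $f_{1+\kappa_\Gamma}(\lambda)\ge n\vartheta_\Gamma f_1(\lambda)$, the remaining indices $i\le 1+\kappa_\Gamma$ then following from the ordering. When $\Gamma=\Gamma_n$ this is the equality $f_1(\lambda)=1\cdot f_1(\lambda)$, since $\vartheta_{\Gamma_n}=1/n$. When $\Gamma\ne\Gamma_n$, fix a competitor $v=(-\alpha_1,\dots,-\alpha_{\kappa_\Gamma},\alpha_{\kappa_\Gamma+1},\dots,\alpha_n)\in\Gamma$ with all $\alpha_i>0$ as in \eqref{theta1-gamma}; the second fact gives $\sum_{i>\kappa_\Gamma}\alpha_i f_i(\lambda)\ge\sum_{i\le\kappa_\Gamma}\alpha_i f_i(\lambda)$, and estimating $f_i(\lambda)\le f_{1+\kappa_\Gamma}(\lambda)$ for $i\ge\kappa_\Gamma+1$, $f_i(\lambda)\ge f_{1+\kappa_\Gamma}(\lambda)$ for $2\le i\le\kappa_\Gamma$, while keeping the term $\alpha_1 f_1(\lambda)$, produces
\[
f_{1+\kappa_\Gamma}(\lambda)\Bigl(\sum_{i=\kappa_\Gamma+1}^{n}\alpha_i-\sum_{i=2}^{\kappa_\Gamma}\alpha_i\Bigr)\ \ge\ \alpha_1\, f_1(\lambda).
\]
The parenthesis is positive, for averaging $v$ over permutations shows $\bigl(\tfrac1n\sum_i v_i\bigr)(1,\dots,1)\in\Gamma$, which forces $\sum_i v_i>0$, i.e.\ $\sum_{i>\kappa_\Gamma}\alpha_i>\alpha_1+\sum_{i=2}^{\kappa_\Gamma}\alpha_i$; dividing, using $f_1(\lambda)>0$, and taking the supremum over admissible $v$ gives $f_{1+\kappa_\Gamma}(\lambda)\ge n\vartheta_\Gamma f_1(\lambda)$ by \eqref{theta1-gamma}.

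For the sharpness statement I would exhibit, for each cone, an admissible pair $(f,\Gamma)$ and a sequence $\lambda^{(m)}\in\Gamma$ along which $f_{\kappa_\Gamma+2}(\lambda^{(m)})/f_1(\lambda^{(m)})\to0$, which rules out extending the conclusion to the $(\kappa_\Gamma+2)$-nd derivative; the model case is $f=\sigma_k^{1/k}$ on $\Gamma_k$ (where $\kappa_{\Gamma_k}=n-k$), with $\lambda^{(m)}=(1,\dots,1,m,\dots,m)$ consisting of $n-k+1$ ones and $k-1$ copies of $m$, along which a direct computation gives $f_{\kappa_{\Gamma_k}+2}(\lambda^{(m)})/f_1(\lambda^{(m)})\to0$ as $m\to\infty$. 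I expect the main obstacle to be the second gradient fact, specifically the passage $t\to+\infty$: \eqref{addistruc} constrains $f$ only along rays through the origin, whereas the natural comparison point drifts away from such a ray, so the argument must be routed through $f(tv)$ and the monotonicity of $t\mapsto f(tv)$ rather than through $f(\lambda+tv)$ directly. Once that inequality is available, the rest is bookkeeping with the ordering $f_1(\lambda)\ge\cdots\ge f_n(\lambda)$ and the definition \eqref{theta1-gamma} of $\vartheta_\Gamma$.
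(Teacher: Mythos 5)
Your argument is correct, and it is essentially the argument of the cited prequel \cite{yuan-PUE1} (the present paper only quotes Theorem \ref{yuan-k+1} without proof): the two pillars — the gradient inequality $\sum_i v_i f_i(\lambda)\ge 0$ for all $\lambda,v\in\Gamma$ obtained from concavity plus \eqref{addistruc}, and the monotone ordering $f_1(\lambda)\ge\cdots\ge f_n(\lambda)$ — are exactly how $(\mathbf 1)$ and $(\mathbf 2)$ are derived there, with the test vectors $(-\alpha_1,\cdots,-\alpha_{\kappa_\Gamma},\alpha_{\kappa_\Gamma+1},\cdots,\alpha_n)$ feeding directly into the definition \eqref{theta1-gamma} of $\vartheta_\Gamma$. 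The only soft spot is the sharpness claim, where your $\sigma_k^{1/k}$ computation exhibits failure of $(\kappa_\Gamma+2)$-uniform ellipticity only for the cones $\Gamma_k$ (hence for every value of $\kappa_\Gamma$, which is the intended reading), not for an arbitrary cone.
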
 





In particular, we have the following conclusion.
\begin{lemma} [\cite{yuan-PUE1}]
\label{lemma5.11}
Suppose $(f,\Gamma)$ obeys  \eqref{concave} and
\eqref{addistruc}.  
Then the following 
are equivalent: 
\begin{itemize}
	\item $\Gamma$ is of type 2. 
	 That is $\Gamma_\infty=\mathbb{R}^{n-1}$. 
	\item  There is a uniform constant $\theta$ such that 
	\begin{equation}
		\label{fully-uniform2}
		\begin{aligned}
			f_{i}(\lambda)\geq  \theta\sum_{j=1}^n f_j(\lambda)>0 	\mbox{ in } \Gamma,	\,\, \forall 1\leq i\leq  n.
		\end{aligned}
	\end{equation}
\end{itemize}
\end{lemma}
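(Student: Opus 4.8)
The plan is to prove the two implications separately, using Theorem \ref{yuan-k+1} as the main engine. Note first that the equivalence $\Gamma\text{ of type 2}\iff\Gamma_\infty=\mathbb R^{n-1}$ is essentially by definition: $\Gamma$ is of type 2 means $(0,\dots,0,1)\notin\partial\Gamma$, hence $(0,\dots,0,1)\in\Gamma$ (as $\Gamma$ is open), and since $\Gamma$ is an open convex cone containing $\Gamma_n$ and symmetric, $(0,\dots,0,1)\in\Gamma$ forces all permutations, convex combinations and positive scalings into $\Gamma$; a short argument shows this is equivalent to $\Gamma_\infty=\mathbb R^{n-1}$. Equivalently, in the language of \eqref{kappa1-Gamma}, type 2 is the statement $\kappa_\Gamma\geq 1$. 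I would dispose of this first as a preliminary remark.

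For the implication $\Gamma\text{ of type 2}\Rightarrow$ \eqref{fully-uniform2}: assume $\kappa_\Gamma\geq 1$. Order $\lambda_1\leq\cdots\leq\lambda_n$. Part $(\mathbf 2)$ of Theorem \ref{yuan-k+1} applies with the range $1\leq i\leq 1+\kappa_\Gamma$; since $\kappa_\Gamma\geq 1$ this range includes $i=1$ and $i=2$, but more to the point, for a symmetric concave $f$ and any $i$, concavity forces $f_j(\lambda)\leq f_i(\lambda)$ whenever $\lambda_j\geq\lambda_i$, i.e. the gradient components are ordered oppositely to the $\lambda_i$. Hence $f_1(\lambda)=\min_i f_i(\lambda)$. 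By $(\mathbf 2)$, $f_1(\lambda)\geq\vartheta_\Gamma\sum_{j=1}^n f_j(\lambda)$, and by $(\mathbf 1)$ the right side is strictly positive (as $\sum f_j>0$ and $\vartheta_\Gamma>0$ when $\kappa_\Gamma\geq1$, which needs a brief check from \eqref{theta1-gamma}: the supremum there is over a nonempty set and is positive). Therefore every $f_i(\lambda)\geq f_1(\lambda)\geq\vartheta_\Gamma\sum_j f_j(\lambda)>0$, and we may take $\theta=\vartheta_\Gamma$. Since the ordering of the $\lambda_i$ was arbitrary and $f$ is symmetric, \eqref{fully-uniform2} holds on all of $\Gamma$.

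For the converse, suppose $\Gamma$ is of type 1, i.e. $(0,\dots,0,1)\in\partial\Gamma$; I claim \eqref{fully-uniform2} fails. Consider $\mu^{(s)}=(s,\dots,s,1)$ for small $s>0$; then $\mu^{(s)}\in\Gamma$ and $\mu^{(s)}\to(0,\dots,0,1)\in\partial\Gamma$ as $s\to 0^+$. By \eqref{homogeneous-1-buchong2}-style reasoning — actually here we only have \eqref{concave} and \eqref{addistruc}, but Theorem \ref{yuan-k+1} still gives $f_i\geq 0$ and concavity gives monotonicity of the gradient — the key point is that at the boundary point $(0,\dots,0,1)$ the outward conormal to $\partial\Gamma$ must be the vector $e_n$ (up to scaling), because $(0,\dots,0,1)$ lies on the coordinate hyperplanes $\{\lambda_i=0\}$ for $i<n$ and $\partial\Gamma$ is a convex hypersurface through it contained in $\{\lambda_n\geq 0,\ \sum\lambda_i\geq 0\}$; supporting-hyperplane and symmetry considerations pin the conormal direction down to $e_n$. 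Consequently $f_i(\mu^{(s)})/\sum_j f_j(\mu^{(s)})\to 0$ for $i<n$ as $s\to0^+$, contradicting any uniform lower bound $\theta>0$. I would make this rigorous by using concavity to compare $f(\mu^{(s)})$ with directional derivatives and invoking $(\mathbf 1)$ of Theorem \ref{yuan-k+1} together with the fact that $f$ vanishes or degenerates in the appropriate directions near $\partial\Gamma$; alternatively one can argue directly that $\vartheta_\Gamma=0$ when $\kappa_\Gamma=0$ is impossible — in fact for type 1 cones $\kappa_\Gamma=0$, so $(\mathbf 2)$ of Theorem \ref{yuan-k+1} is vacuous and provides no uniform bound, and one must instead exhibit the degeneration explicitly as above.

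The main obstacle I anticipate is the converse direction: showing that uniform ellipticity genuinely \emph{fails} for type 1 cones rather than merely observing that Theorem \ref{yuan-k+1} stops guaranteeing it. This requires producing an explicit sequence $\lambda^{(s)}\in\Gamma$ along which the ratio $\min_i f_i(\lambda^{(s)})/\sum_j f_j(\lambda^{(s)})\to 0$, and the cleanest route is to exploit that the conormal to $\partial\Gamma$ at $(0,\dots,0,1)$ points in the $e_n$-direction, forcing $f_n$ to dominate the other $f_i$'s near that boundary point; making the conormal claim precise (it follows from convexity of $\Gamma$, the inclusion $\Gamma_n\subseteq\Gamma$, and symmetry) is the one place where care is needed.
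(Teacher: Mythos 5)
Your proof breaks down at the very first step: you identify ``type 2'' with $\kappa_\Gamma\geq 1$, but by \eqref{kappa1-Gamma} a type 2 cone is one with $(0,\dots,0,1)\in\Gamma$, i.e.\ $\kappa_\Gamma=n-1$ (for instance $\Gamma_2\subset\mathbb{R}^3$ has $\kappa_{\Gamma_2}=1$ yet is of type 1). This error forces you into a wrong workaround in the forward direction. With the ordering $\lambda_1\leq\cdots\leq\lambda_n$, the anti-monotonicity of the gradient of a symmetric concave function gives $f_1(\lambda)\geq f_2(\lambda)\geq\cdots\geq f_n(\lambda)$, so $f_1$ is the \emph{largest} derivative, not the smallest (this is also why the inequality $nf_1\geq\sum_j f_j$ in part $(\mathbf 2)$ of Theorem \ref{yuan-k+1} is consistent). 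Your chain ``every $f_i\geq f_1\geq\vartheta_\Gamma\sum_j f_j$'' is therefore false, and if it were true it would establish \eqref{fully-uniform2} for \emph{every} cone, including $\Gamma_n$ with $f=\sigma_n^{1/n}$, where uniform ellipticity manifestly fails (take $\lambda=(\varepsilon,1,\dots,1)$ and let $\varepsilon\to0$). The correct forward argument is immediate once the characterization is fixed: type 2 means $\kappa_\Gamma=n-1$, so part $(\mathbf 2)$ of Theorem \ref{yuan-k+1} applies to the full range $1\leq i\leq 1+\kappa_\Gamma=n$ of indices, and \eqref{fully-uniform2} holds with $\theta=\vartheta_\Gamma>0$.

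The converse direction also has a gap. The claim that the conormal to $\partial\Gamma$ at $(0,\dots,0,1)$ is $e_n$ is unsubstantiated and generally wrong (for $\Gamma_n$ the normal cone at that point is spanned by $e_1,\dots,e_{n-1}$), so the asserted degeneration $f_i/\sum_j f_j\to0$ for $i<n$ does not follow. More seriously, the implication ``\eqref{fully-uniform2} $\Rightarrow$ type 2'' cannot be deduced from \eqref{concave} and \eqref{addistruc} alone for an \emph{individual} $f$: the pair $f=\sigma_1$, $\Gamma=\Gamma_n$ satisfies both hypotheses and \eqref{fully-uniform2} with $\theta=1/n$, yet $\Gamma_n$ is of type 1. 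The converse must be read through the additional structure carried over from the prequel (e.g.\ $f$ degenerating on $\partial\Gamma$ as in \eqref{homogeneous-1-buchong2}), or as the sharpness clause of Theorem \ref{yuan-k+1}: for a type 1 cone the $(\kappa_\Gamma+1)$-uniform ellipticity cannot be improved, i.e.\ some admissible $f$ violates \eqref{fully-uniform2}. As written, your argument neither supplies such an $f$ nor invokes that sharpness statement.
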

 
 We may verify \eqref{addistruc} in the assumptions imposed in main results.
   \begin{lemma}[\cite{yuan-PUE1}]
 	\label{lemma2.3}
 	
 	Assume,  in addition to \eqref{concave}, that 
 	$\sup_\Gamma f=+\infty$ and
 	\begin{equation}
 		\label{addistruc-0}
 		\begin{aligned}
 			\lim_{t\rightarrow+\infty} f(t\lambda)>-\infty, \mbox{  } \forall\lambda\in\Gamma.
 		\end{aligned}
 	\end{equation}
 	Then $(f,\Gamma)$ satisfies \eqref{addistruc}.
 \end{lemma}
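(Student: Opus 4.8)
The plan is to prove the \emph{a priori} stronger statement that $\lim_{t\to+\infty}f(t\lambda)=+\infty$ for every $\lambda\in\Gamma$; since $f$ is a (finite) real-valued function on $\Gamma$, this yields \eqref{addistruc} at once. Only the concavity \eqref{concave} and the fact that $\Gamma$ is an open convex cone will be used; symmetry plays no role here.

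First I would record the one-variable observation that underlies everything. For $\lambda\in\Gamma$ set $\phi_\lambda(t):=f(t\lambda)$; this is smooth and concave on $(0,+\infty)$ by \eqref{concave}, while \eqref{addistruc-0} says $\lim_{t\to+\infty}\phi_\lambda(t)>-\infty$. A concave function on a half-line whose limit at infinity is finite must be nondecreasing — if $\phi_\lambda'(t_0)<0$ then, $\phi_\lambda'$ being nonincreasing, $\phi_\lambda$ would tend to $-\infty$. Hence $\lim_{t\to+\infty}f(t\lambda)=\sup_{t>0}f(t\lambda)=:f_\infty(\lambda)\in(-\infty,+\infty]$, this value depends only on the ray through $\lambda$, and in particular $f(\lambda)\le f_\infty(\lambda)$.

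Now fix an arbitrary $\lambda\in\Gamma$ and, using $\sup_\Gamma f=+\infty$, pick $\mu_k\in\Gamma$ with $f(\mu_k)\to+\infty$. Since $\Gamma$ is open we may choose $\epsilon_k>0$ so small that $\lambda-\epsilon_k\mu_k\in\Gamma$ and, crucially, $\epsilon_k|\mu_k|\to0$. Writing $t\lambda$ as the midpoint of the two points $2t(\lambda-\epsilon_k\mu_k)$ and $2t\epsilon_k\mu_k$ of $\Gamma$ and applying concavity,
\[
f(t\lambda)\ \ge\ \tfrac12 f\big(2t(\lambda-\epsilon_k\mu_k)\big)+\tfrac12 f\big(2t\epsilon_k\mu_k\big)\ \ge\ \tfrac12 f(\lambda-\epsilon_k\mu_k)+\tfrac12 f\big(2t\epsilon_k\mu_k\big)\qquad(t\ge\tfrac12),
\]
the last step by the monotonicity of $\phi_{\lambda-\epsilon_k\mu_k}$. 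Letting $t\to+\infty$ gives $f_\infty(\lambda)\ge\tfrac12 f(\lambda-\epsilon_k\mu_k)+\tfrac12 f_\infty(\mu_k)\ge\tfrac12 f(\lambda-\epsilon_k\mu_k)+\tfrac12 f(\mu_k)$; then letting $k\to+\infty$, continuity of $f$ together with $\epsilon_k|\mu_k|\to0$ forces $f(\lambda-\epsilon_k\mu_k)\to f(\lambda)$ while $f(\mu_k)\to+\infty$, so $f_\infty(\lambda)=+\infty$, as desired.

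The only delicate point — and where the first step does its real work — is the ordering of the two limits in the last display: $\epsilon_k$ must be shrunk fast enough that the base point $\lambda-\epsilon_k\mu_k$ stays in $\Gamma$ with $f(\lambda-\epsilon_k\mu_k)$ bounded, yet for each fixed $k$ the tail $f(2t\epsilon_k\mu_k)$ is still allowed to blow up as $t\to+\infty$; the monotonicity of $f$ along rays is exactly what legitimizes this decoupling. Everything else is routine.
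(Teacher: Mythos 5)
Your argument is correct, and I could not find a gap. The two ingredients are sound: (i) for each $\lambda\in\Gamma$ the function $t\mapsto f(t\lambda)$ is concave (composition of a concave function with an affine map into the convex cone), and a concave function on a half-line with limit $>-\infty$ at infinity must be nondecreasing, so $f(\lambda)\le f_\infty(\lambda):=\lim_{t\to+\infty}f(t\lambda)$; (ii) the midpoint decomposition $t\lambda=\tfrac12\,2t(\lambda-\epsilon_k\mu_k)+\tfrac12\,2t\epsilon_k\mu_k$ keeps both summands in $\Gamma$ and, combined with the ray-monotonicity from (i) applied to the base point $\lambda-\epsilon_k\mu_k$, lets you pass $t\to\infty$ first and $k\to\infty$ second, yielding $f_\infty(\lambda)=+\infty$, which is strictly stronger than \eqref{addistruc}. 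Note that the paper itself supplies no proof of this lemma --- it is quoted from the prequel \cite{yuan-PUE1} --- so I cannot certify that your route coincides with the original one; but your argument is self-contained, uses only concavity, openness and the cone structure (symmetry is indeed not needed), and the one delicate point you flag (the order of the two limits, legitimized by monotonicity along the ray through $\lambda-\epsilon_k\mu_k$) is handled correctly.
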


\subsection{Some geometric conclusions}
\label{subsec1-geometric-conclusion}

In \cite{yuan-PUE1} the author proved the following results.

\begin{theorem}[\cite{yuan-PUE1}]
	\label{existence1-compact-2}
	Suppose $f$  satisfies \eqref{concave}, \eqref{homogeneous-1-buchong2}, \eqref{homogeneous-1-mu}  and 
	\eqref{fully-uniform2} in $\Gamma$.
	Let $(M,g)$ be a compact connected Riemannian manifold of dimension $n\geq  3$ with smooth boundary and support a $C^2$  
	conformal metric 
	with $\lambda(-g^{-1}A_{\underline{g}})\in \Gamma$ in $\bar M$.
	Then for any $0<\psi\in C^\infty(\bar M)$, there exists at least one smooth  
	complete  
	metric $\tilde{g}=e^{2u}g$ 
	satisfying 
	${f}(\lambda(-\tilde{g}^{-1}A_{\tilde{g}}))= \psi$
	and  $\lambda(-{g}^{-1}A_{\tilde{g}})\in \Gamma$
	in $M$.
	
		\end{theorem}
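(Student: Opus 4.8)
The plan is to turn the equation into a uniformly elliptic fully nonlinear PDE that degenerates at $\partial M$, solve Dirichlet problems with boundary data tending to $+\infty$, and pass to the limit. By \eqref{conformal-formula2}, writing $\tilde g=e^{2u}g$ and
\[
W[u]:=-A_{\tilde g}=\nabla^2 u+\tfrac12|\nabla u|^2 g-du\otimes du-A_g ,
\]
one has $\lambda(-\tilde g^{-1}A_{\tilde g})=e^{-2u}\lambda(g^{-1}W[u])$, so by \eqref{homogeneous-1-mu} the equation $f(\lambda(-\tilde g^{-1}A_{\tilde g}))=\psi$ is equivalent to $f(\lambda(g^{-1}W[u]))=e^{2\varsigma u}\psi$, whose right-hand side is strictly increasing in $u$; hence the comparison principle is at our disposal. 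Pick $\underline u$ with $e^{2\underline u}g=\underline g$, so $\lambda(g^{-1}W[\underline u])=\lambda(-g^{-1}A_{\underline g})\in\Gamma$ on $\bar M$. Since $W[\cdot]$ is unchanged under $u\mapsto u+\mathrm{const}$, since $f>0$ on the compact admissible set $\{\lambda(g^{-1}W[\underline u]):x\in\bar M\}$ while $f=0$ on $\partial\Gamma$ by \eqref{homogeneous-1-buchong2}, the function $\underline w:=\underline u-C$ is, for $C$ large, an admissible subsolution with $\underline w<0$ on $\partial M$.

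For each $R>0$ I would solve $f(\lambda(g^{-1}W[u_R]))=e^{2\varsigma u_R}\psi$ in $M$ with $u_R=R$ on $\partial M$ by the continuity method. The $C^0$ bounds are cheap: $u_R\ge\underline w$ by comparison, while at an interior maximum $x_0$ one has $\nabla u_R(x_0)=0$ and $\nabla^2 u_R(x_0)\le 0$, so $W[u_R](x_0)=\nabla^2 u_R(x_0)-A_g(x_0)\le -A_g(x_0)$; by Weyl's inequality the ordered eigenvalues satisfy $\lambda(g^{-1}W[u_R](x_0))\le\lambda(-g^{-1}A_g(x_0))$, and since $\lambda(g^{-1}W[u_R](x_0))\in\Gamma$ and $\Gamma+\overline{\Gamma_n}\subseteq\Gamma$, monotonicity of $f$ (from \eqref{fully-uniform2}) together with $f=0$ on $\partial\Gamma$ yield $e^{2\varsigma u_R(x_0)}\psi(x_0)\le C_1:=\sup_{\bar M}\{f(\lambda(-g^{-1}A_g)):\lambda(-g^{-1}A_g)\in\Gamma\}<\infty$, so $\max_{\bar M}u_R\le\max(R,C_0)$ with $C_0$ independent of $R$. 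The remaining interior and boundary gradient and second-order estimates, followed by the Evans--Krylov estimate, rely on \eqref{concave} and, decisively, on the full uniform ellipticity \eqref{fully-uniform2} (equivalent by Lemma \ref{lemma5.11} to $\Gamma$ being of type $2$); together with the admissible subsolution $\underline w$ this is the standard package giving $u_R\in C^\infty(\bar M)$.

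The comparison principle forces $u_R\le u_{R'}$ for $R\le R'$, so $u_R\nearrow u_\infty$ pointwise with $u_\infty\ge\underline w$. To keep $u_\infty$ finite in the interior $M\setminus\partial M$ I would build a global supersolution $\bar u$ that blows up at $\partial M$: with $r=\mathrm{dist}(\cdot,\partial M)$, the metric $e^{2(-\log r+\phi)}g$ is asymptotically hyperbolic, with $-\tilde g^{-1}A_{\tilde g}\to\tfrac12 e^{-2\phi}\,\mathrm{Id}$ as $r\to0$, so choosing $\phi$ large near $\partial M$ makes $-\log r+\phi$ a supersolution in a collar $\{r<\delta\}$; patching it (via a maximum) with a large constant on the rest of $M$ gives $\bar u$, finite in the interior and $+\infty$ at $\partial M$. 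The region $\{\bar u\le R\}$ is compactly contained in $M\setminus\partial M$, on its boundary $u_R\le R=\bar u$, so comparison gives $u_R\le\bar u$ there, while $u_R\le R<\bar u$ on $\{\bar u>R\}$ once $R\ge C_0$; hence $u_R\le\bar u$ everywhere and $u_\infty<\infty$ in $M\setminus\partial M$. Interior gradient, Hessian, Evans--Krylov and Schauder estimates then upgrade the convergence to $C^\infty_{\mathrm{loc}}(M\setminus\partial M)$, and $u_\infty$ is a smooth admissible solution there.

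Finally, for completeness I would construct, for small $s>0$, an admissible collar subsolution of Loewner--Nirenberg type $\underline v_s=-\log(r+s^2)+\phi$ on $\{r<\delta\}$, with $\phi$ chosen (small near $\partial M$, so that the leading curvature exceeds $\psi$) so that $\underline v_s\le\underline w$ on $\{r=\delta\}$ and $\underline v_s\le -2\log s=u_{R(s)}$ on $\partial M$, where $R(s):=-2\log s\to\infty$. Comparison in $\{r<\delta\}$ gives $u_\infty\ge u_{R(s)}\ge\underline v_s$, and evaluating at points with $r=s$ shows $u_\infty\ge -\log r-C$ for all small $r$, so $\int_0^\delta e^{u_\infty}\,dr=\infty$, i.e.\ $\tilde g=e^{2u_\infty}g$ is complete. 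The main obstacle is not the scheme but the a priori estimates underneath it — the interior and boundary gradient bounds and, above all, the second-order (Hessian) estimate for this non-divergence, boundary-degenerate equation whose gradient terms $\tfrac12|\nabla u|^2g-du\otimes du$ obstruct the naive maximum-principle argument; this is exactly where \eqref{concave} and \eqref{fully-uniform2} are used in an essential way, and a secondary technical point is the rigorous application of the comparison principle against the patched, possibly non-admissible, supersolution, which must be carried out in the viscosity sense.
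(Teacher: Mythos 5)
This theorem is imported from the prequel \cite{yuan-PUE1}; the present paper gives no proof of it, so the comparison can only be against the standard architecture, which is exactly what you reproduce: rewrite the equation via \eqref{conformal-formula2} as a uniformly elliptic concave equation $f(\lambda(g^{-1}W[u]))=e^{2\varsigma u}\psi$ with monotone zeroth-order dependence, solve Dirichlet problems with boundary data $R\nearrow+\infty$ using the translated subsolution $\underline u-C$, obtain monotone convergence by comparison, control the limit from above by barriers blowing up at $\partial M$, and prove completeness with Loewner--Nirenberg-type lower barriers $-\log(r+s^2)+\phi$. Your reductions, the interior $C^0$ bound via Weyl monotonicity at an interior maximum, and the completeness argument are all correct, and you correctly identify \eqref{fully-uniform2} (equivalently, $\Gamma$ of type $2$ by Lemma \ref{lemma5.11}) as the hypothesis that makes the scheme run.

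Two caveats. First, the decisive content of the cited proof is precisely what you defer: the interior and boundary gradient and second-order estimates for this equation, whose gradient terms $\tfrac12|\nabla u|^2g-du\otimes du$ are not handled by any off-the-shelf ``standard package''; under \eqref{concave} and \eqref{fully-uniform2} they do hold, but they are the theorem's real work, not a citation. Second, your upper barrier is patched as a \emph{maximum} of two supersolutions, and the maximum of two viscosity supersolutions of a degenerate elliptic equation is in general \emph{not} a supersolution (it is the minimum that is); the crossing set breaks the argument. The standard repair avoids patching altogether: for each $x_0$ with $\rho=\tfrac12\,\mathrm{dist}(x_0,\partial M)$, build a radial supersolution $w_\rho$ on $B_\rho(x_0)$ with $w_\rho=+\infty$ on $\partial B_\rho(x_0)$; since $u_R-w_\rho\to-\infty$ at $\partial B_\rho(x_0)$, comparison needs no boundary information and yields $u_R(x_0)\le w_\rho(x_0)$, a bound depending only on $\mathrm{dist}(x_0,\partial M)$. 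With that substitution, and with the a priori estimates supplied, your outline is the proof.
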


		\begin{theorem}[\cite{yuan-PUE1}]
			\label{thm1-shouten}
			Suppose $(f,\Gamma)$  satisfies \eqref{concave}, \eqref{homogeneous-1-buchong2}, \eqref{homogeneous-1-mu} and 
			\eqref{fully-uniform2}.
			Let $(M,g)$ be a complete noncompact Riemannian manifold of dimension $n\geq 3$ and with a $C^2$   complete  conformal  metric $\underline{g}$ subject to 
			\begin{equation}
				\label{key-assum2-2}
				\begin{aligned}
					f(\lambda(-\underline{g}^{-1} A_{\underline{g}})) \geq  \Lambda_1  \psi, \,\, \lambda(-g^{-1}A_{\underline{g}})\in \Gamma,  \mbox{ } \mbox{ in } M  
				\end{aligned}
			\end{equation}   
			where $0<\psi\in C^\infty(M)$ and $\Lambda_1$ is a uniform positive constant.
			Then there  
			exists 
			a unique 
			smooth   maximal   complete  metric $\tilde{g}=e^{2u}g$ satisfying 
			$f(\lambda(-\tilde{g}^{-1} A_{\tilde{g}})) =\psi$ and $\lambda(- {g}^{-1} A_{\tilde{g}}) \in\Gamma.$
		\end{theorem}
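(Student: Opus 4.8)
The plan is to reduce Theorem \ref{thm1} to Theorem \ref{thm1-shouten} by exploiting the algebraic identity
$V[u]=\frac{n-2}{\alpha(\tau-1)}A^{\tau,\alpha}_{\tilde g}$ and by constructing, out of the cone $\Gamma$ in which $\lambda(\tilde g^{-1}A^{\tau,\alpha}_{\tilde g})$ is required to lie, a new cone $\tilde\Gamma$ together with a new operator $\tilde f$ in $\tilde\Gamma$ that is \emph{uniformly elliptic} under hypothesis \eqref{tau-alpha-sharp}. The first step is the change of variables. From \eqref{beta-gamma-A2}--\eqref{beta-gamma-A-3}, finding $\tilde g=e^{2u}g$ with $f(\lambda(\tilde g^{-1}A^{\tau,\alpha}_{\tilde g}))=\psi$ is, for $\tau\neq 1$, equivalent to $f\!\left(\tfrac{\alpha(\tau-1)}{n-2}\lambda(g^{-1}V[u])\right)=\psi$, i.e. to an equation of the form $F(\lambda(g^{-1}V[u]))=\psi$ with $F(\mu)=f\!\left(\tfrac{\alpha(\tau-1)}{n-2}\mu\right)$, defined and concave on the dilated/reflected cone on which $\tfrac{\alpha(\tau-1)}{n-2}\mu\in\Gamma$. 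The operator $V[u]=\Delta u\, g-\varrho\nabla^2 u+\gamma|\nabla u|^2 g+\varrho\, du\otimes du+A$ differs from the Schouten-type operator $-\nabla^2 u-\tfrac12|\nabla u|^2g+du\otimes du+A_g$ precisely through the trace term $\Delta u\, g$ and the rescaling of the Hessian by $\varrho=\tfrac{n-2}{\tau-1}$; this is exactly the structure that Proposition \ref{key-lemma1} (invoked in the excerpt) is designed to handle.

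Next I would carry out the key \textbf{uniform ellipticity} step. Writing the linearized operator, its principal part acts on a symmetric matrix $\eta$ by $L(\eta)=\sum_i F_i\big((\tr\eta)\,\delta_{ij}-\varrho\,\eta_{ij}\big)$-type expression; diagonalizing, the coefficient attached to the direction $e_k$ is $\sum_{i\ne k}F_i-(\varrho-1)F_k=\sum_i F_i-\varrho F_k$. Uniform ellipticity amounts to a lower bound $\sum_i F_i-\varrho F_k\geq \theta\sum_i F_i$ for all $k$, uniformly on the relevant cone. Using Theorem \ref{yuan-k+1}(\textbf{2}) one has $F_k\leq \tfrac{1}{n\vartheta_\Gamma}F_1\cdot(\cdots)$ only for the small indices; the decisive point is that the weight $\varrho$ appearing geometrically is $\tfrac{n-2}{\tau-1}$, while the cone produces a comparison constant governed by $\varrho_\Gamma$ via \eqref{varrho1-Gamma}: the boundary point $(1,\dots,1,1-\varrho_\Gamma)\in\partial\Gamma$ is exactly what pins down how large $\sum_i F_i/F_k$ can be forced to be. Condition \eqref{tau-alpha-sharp}, $\tau>1+(n-2)\varrho_\Gamma^{-1}$ for $\alpha=1$ (so $0<\varrho<\varrho_\Gamma$) and $\tau<1$ for $\alpha=-1$ (so $\varrho<0$, where the Hessian term has the favorable sign and ellipticity is automatic, with the type-2 case at $\tau=1$ handled through Lemma \ref{lemma5.11}), is precisely the threshold making $\sup_{\text{cone}} \varrho F_k/\sum_i F_i<1$. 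So $(F,\tilde\Gamma)$ satisfies \eqref{fully-uniform2}, after replacing $F$ by $F^{1/\varsigma}$-type normalization if needed to retain \eqref{homogeneous-1-mu}, \eqref{homogeneous-1-buchong2}. One also checks that \eqref{concave}, \eqref{homogeneous-1-buchong2}, \eqref{homogeneous-1-mu} are inherited by $(F,\tilde\Gamma)$ and that \eqref{addistruc} holds via Lemma \ref{lemma2.3} (noting $\sup_\Gamma f=+\infty$ follows from \eqref{homogeneous-1-mu} together with $\varsigma>0$).

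Then I would \textbf{transfer the hypotheses and conclusion}. The complete pseudo-admissible metric $\underline g=e^{2\underline u}g$ with $f(\lambda(\underline g^{-1}A^{\tau,\alpha}_{\underline g}))\geq\Lambda_0\psi$ on $M\setminus K_0$ translates, after the change of variables and a standard cutoff/modification on the compact set $K_0$ (using that $\lambda(\underline g^{-1}A^{\tau,\alpha}_{\underline g})\in\bar\Gamma$ there, so the subsolution inequality can only improve after subtracting a large concave bump — here I would use \eqref{computation1-add} to control $V[\underline u+w]$ and choose $w$ so that the extra gradient terms are absorbed), into exactly the subsolution hypothesis \eqref{key-assum2-2} for the pair $(F,\tilde\Gamma)$ with some $\Lambda_1>0$. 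Applying Theorem \ref{thm1-shouten} yields a unique smooth maximal complete metric $\hat g=e^{2\hat u}g$ solving $F(\lambda(g^{-1}V[\hat u]))=\psi$ with $\lambda(g^{-1}V[\hat u])\in\tilde\Gamma$; unwinding the identity $V[\hat u]=\tfrac{n-2}{\alpha(\tau-1)}A^{\tau,\alpha}_{\hat g}$ gives $f(\lambda(\hat g^{-1}A^{\tau,\alpha}_{\hat g}))=\psi$ and admissibility, and maximality is preserved because the change of variables $u\mapsto u$ is order-preserving (it is the identity on $u$; the monotonicity of $V$ in the relevant direction is exactly what Proposition \ref{key-lemma1} guarantees). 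Uniqueness of the maximal solution follows from the comparison principle for the uniformly elliptic equation, as in \cite{yuan-PUE1}.

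\textbf{Main obstacle.} The crux is the uniform ellipticity estimate: showing that $\varrho_\Gamma$ — defined purely through the single boundary point $(1,\dots,1,1-\varrho_\Gamma)$ — is genuinely the sharp constant controlling $\sup \varrho F_k/\sum_i F_i$ over all admissible $\lambda$, not merely an upper bound coming from one test direction. This requires combining the concavity-driven gradient inequalities of Theorem \ref{yuan-k+1} with a careful cone-geometry argument (the content of Proposition \ref{key-lemma1}), and it is where the condition \eqref{tau-alpha-sharp} must be used in its exact form. The secondary technical point is the cutoff modification near $K_0$ producing a global pseudo-admissible subsolution for the transformed equation without destroying completeness or the asymptotic inequality.
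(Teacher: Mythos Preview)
You have mistaken which theorem you are proving. The statement in question, Theorem~\ref{thm1-shouten}, is not proved in this paper at all: it is quoted from the prequel \cite{yuan-PUE1} (see Subsection~\ref{subsec1-geometric-conclusion}) and used as a black box. Your proposal, by its own first sentence, is a plan to reduce Theorem~\ref{thm1} to Theorem~\ref{thm1-shouten}, treating the latter as an established input. That is a proof sketch for Theorem~\ref{thm1}, not for Theorem~\ref{thm1-shouten}.

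For what it is worth, your outline of how Theorem~\ref{thm1} follows from Theorem~\ref{thm1-shouten} does match the paper's argument in Section~\ref{sec3-proof}: the linear change of variables \eqref{map1}--\eqref{def-f} together with Proposition~\ref{key-lemma1} converts \eqref{main-equ1} under hypothesis \eqref{tau-alpha-sharp} into the uniformly elliptic Schouten-type equation \eqref{mainequ-02-0}; then Proposition~\ref{thm4-construction} upgrades the pseudo-admissible metric satisfying \eqref{key-assum1} only outside $K_0$ to a global complete admissible metric $\hat g$ satisfying \eqref{key-assum1-2}, which is exactly the input \eqref{key-assum2-2} needed to invoke Theorem~\ref{thm1-shouten}. (One minor difference: the paper's uniform-ellipticity step is shorter than your gradient computation --- it simply observes that $\varrho<\varrho_\Gamma$ gives $(1,\dots,1,1-\varrho)\in\Gamma$, hence $(0,\dots,0,1)\in\tilde\Gamma$, so $\tilde\Gamma$ is of type~2 and Lemma~\ref{lemma5.11} applies.) So your reasoning is sound and aligned with the paper --- just aimed at the wrong target. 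A genuine proof of Theorem~\ref{thm1-shouten} itself requires the interior a~priori estimates, the exhaustion of $M$ by compact domains, and the comparison/maximality arguments carried out in \cite{yuan-PUE1}, none of which appear in your proposal.
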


 \medskip

\section{Proof of main results}
\label{sec3-proof}

\subsection{Analytic optimal condition and construction of uniformly elliptic operators}
\label{sec1-AOC}

 
 From \eqref{conformal-formula1}   equation  \eqref{main-equ1} ($\tau\neq1$) can be reduced to an equation  of the form
\begin{equation}
	\label{equ0-0} 
	\begin{aligned} 
			{f}(\lambda[g^{-1}(\Delta u  g -\varrho\nabla^2 u+A(x,u,\nabla u))])=\psi(x,u,\nabla u).  	\end{aligned}
\end{equation} 
We draw the optimal analytic condition  under which 
the equation \eqref{equ0-0} is of uniform ellipticity. 

 Given a cone $\Gamma$, we have $\varrho_\Gamma$ that is defined in \eqref{varrho1-Gamma}. Take  a constant $\varrho$ satisfying
 \begin{equation}
 	\label{assumption-4}
 	\begin{aligned} 
 		\varrho<  \varrho_{\Gamma} \mbox{ and } \varrho\neq 0.
 	\end{aligned}
 \end{equation}
So $\varrho<n$. We let  
\begin{equation}	 	\begin{aligned}  		\mu_i=\frac{1}{n-\varrho}
		\left(\sum_{j=1}^n \lambda_j -\varrho\lambda_i \right), 
		\mbox{ i.e.  }
		\lambda_i=\frac{1}{\varrho}\left(\sum_{j=1}^n \mu_j-(n-\varrho)\mu_i\right),	\end{aligned}  \end{equation} 
\begin{equation}
	\label{map1}
	\begin{aligned}
		\tilde{\Gamma}  =
		\left\{(\lambda_1,\cdots,\lambda_n): 
		\lambda_i=\frac{1}{\varrho}\left(\sum_{j=1}^n \mu_j-(n-\varrho)\mu_i\right),
		\mbox{ } (\mu_1,\cdots,\mu_n)\in \Gamma 
		\right\}.
	\end{aligned}
\end{equation} 
 We can check that $\tilde{\Gamma}$ is also an open symmetric convex cone in $\mathbb{R}^n$. Note  that $\sum_{i=1}^n\lambda_i=\sum_{i=1}^n\mu_i$,  we know $\tilde{\Gamma}\subseteq\Gamma_1$. For any $\lambda\in\tilde{\Gamma}$ there exists a unique $\mu\in\Gamma$ such that
$$\lambda_i=\frac{1}{\varrho}\left(\sum_{j=1}^n \mu_j-(n-\varrho)\mu_i\right).$$
One has a symmetric concave function $\tilde{f}$ on  $\tilde{\Gamma}$
as follows:
\begin{equation}\label{def-f}\begin{aligned}  \tilde{f}(\lambda)= f(\mu). 
\end{aligned}\end{equation} 
 
In practice, 
Theorem \ref{yuan-k+1} (or 
Lemma \ref{lemma5.11})
gives the following 
analytic optimal condition, under which 
\eqref{equ0-0}  is uniformly elliptic at each admissible functions.

\begin{proposition} 
	\label{key-lemma1}
Let $(f,\Gamma)$ satisfy \eqref{concave} and \eqref{addistruc}. Given a constant $\varrho$ satisfying \eqref{assumption-4},
	we define $(\tilde{f},\tilde{\Gamma})$  as  in \eqref{map1} and \eqref{def-f}. 
	Then  
		$\tilde{f}$ is of uniform ellipticity in $\tilde{\Gamma}$, that is 
		there exists a uniform positive constant $\theta$ such that 
		\begin{equation}	 
			\label{FUE-1}
			\begin{aligned}
				\frac{\partial \tilde{f}}{\partial \lambda_i}(\lambda) \geq \theta \sum_{j=1}^n \frac{\partial \tilde{f}}{\partial \lambda_j}(\lambda) >0, \,\, \forall \lambda\in\tilde{\Gamma},   \mbox{ } \forall 1\leq i\leq n. \nonumber
			\end{aligned}
		\end{equation}
 
\end{proposition}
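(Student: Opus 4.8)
The plan is to translate the uniform ellipticity of $\tilde f$ on $\tilde\Gamma$ into a statement about $f$ on $\Gamma$ via the linear change of variables, and then invoke Theorem \ref{yuan-k+1} together with Lemma \ref{lemma5.11}. First I would record the linear map. Writing $\lambda = L\mu$ with $\lambda_i = \frac{1}{\varrho}\bigl(\sum_j \mu_j - (n-\varrho)\mu_i\bigr)$, one checks that $L$ is symmetric, invertible (its eigenvalues are $1$ on the all-ones direction and $-\frac{n-\varrho}{\varrho}\neq 0$ on its orthogonal complement), so $L^{-1}=L^{*}=L^{T}$ is given by the inverse formula $\mu_i = \frac{1}{n-\varrho}\bigl(\sum_j\lambda_j-\varrho\lambda_i\bigr)$ already displayed, and $\tilde\Gamma = L(\Gamma)$ is again an open symmetric convex cone — symmetry because $L$ commutes with coordinate permutations. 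Since $\tilde f(\lambda)=f(L^{-1}\lambda)$, the chain rule gives $\frac{\partial\tilde f}{\partial\lambda_i}(\lambda)=\sum_k f_k(\mu)\,\frac{\partial\mu_k}{\partial\lambda_i}=\frac{1}{n-\varrho}\bigl(\sum_{k=1}^n f_k(\mu)-\varrho f_i(\mu)\bigr)$, where $\mu=L^{-1}\lambda\in\Gamma$. Thus everything reduces to bounding $\sum_k f_k(\mu)-\varrho f_i(\mu)$ from below by a fixed multiple of $\sum_k f_k(\mu)$, uniformly over $\mu\in\Gamma$ and $1\le i\le n$.

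Next I would split into two cases according to the type of $\Gamma$. If $\Gamma$ is of type $2$, then by Lemma \ref{lemma5.11} (whose hypotheses \eqref{concave}, \eqref{addistruc} are exactly what we assume) there is $\theta_0>0$ with $f_i(\mu)\ge \theta_0\sum_k f_k(\mu)>0$ for all $i$; since also $f_i(\mu)\le\sum_k f_k(\mu)$ (all $f_k\ge0$ by Theorem \ref{yuan-k+1}(1)), we get for $\varrho>0$ that $\sum_k f_k - \varrho f_i \ge (1-\varrho)\sum_k f_k$ when $\varrho\le 1$ and $\ge(1-\varrho+\varrho\theta_0\cdot\text{stuff})$... more carefully, $\sum_k f_k-\varrho f_i\ge \sum_k f_k-\varrho\sum_k f_k$ is useless if $\varrho\ge1$, so instead use $f_i\le(1-(n-1)\theta_0)\sum_k f_k$, giving $\sum_k f_k-\varrho f_i\ge\bigl(1-\varrho(1-(n-1)\theta_0)\bigr)\sum_k f_k$; combined with the lower bound $f_i\ge\theta_0\sum f_k$ this also handles $\varrho<0$, and the positive constant one extracts depends only on $\theta_0,\varrho,n$. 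For $\varrho<0$ the bound $\sum_k f_k-\varrho f_i=\sum_k f_k+|\varrho|f_i\ge\sum_k f_k>0$ is immediate regardless of type, so that subcase needs no type assumption at all.

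The substantive case is $\varrho>0$ with $\Gamma$ of type $1$. Here $\varrho_\Gamma$ is finite and the hypothesis \eqref{assumption-4} reads $0<\varrho<\varrho_\Gamma$. The key is to show $\kappa_\Gamma\ge 1$ forces enough positivity: order $\mu_1\le\cdots\le\mu_n$, so $f_1(\mu)\ge\cdots\ge f_n(\mu)$ by concavity and symmetry, hence the worst index is $i=1$ and it suffices to bound $\sum_k f_k(\mu)-\varrho f_1(\mu)$ below. I expect this to follow from Theorem \ref{yuan-k+1}(2), which gives $f_i(\mu)\ge n\vartheta_\Gamma f_1(\mu)$ for $1\le i\le 1+\kappa_\Gamma$; the precise link between $\varrho_\Gamma$ and the ellipticity constant is Remark \ref{RK1-1}, namely $\varrho_\Gamma\ge 1+n\kappa_\Gamma\vartheta_\Gamma$. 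Using this, $\sum_{k=1}^n f_k(\mu)\ge f_1(\mu)+\sum_{k=2}^{1+\kappa_\Gamma}f_k(\mu)\ge(1+n\kappa_\Gamma\vartheta_\Gamma)f_1(\mu)\ge\varrho\,f_1(\mu)$ is not quite a strict uniform gap, so one wants the strict inequality $\varrho<\varrho_\Gamma\le 1+n\kappa_\Gamma\vartheta_\Gamma$... actually Remark \ref{RK1-1} gives $\varrho_\Gamma\ge1+n\kappa_\Gamma\vartheta_\Gamma$ which is the wrong direction for this, so the honest route is: directly, for $\mu$ ordered increasingly, $(\mu_2,\dots,\mu_n)$ scaled appropriately lies in $\Gamma$ with the first $\varrho$-weighted combination controlled, but cleaner is to argue by the geometry of $\partial\Gamma$ — since $(1,\dots,1,1-\varrho_\Gamma)\in\partial\Gamma$ and $\varrho<\varrho_\Gamma$, the ray direction relevant to $\sum f_k-\varrho f_1$ stays a fixed distance inside $\Gamma$, so the continuous positive function $\mu\mapsto\frac{\sum_k f_k(\mu)-\varrho f_1(\mu)}{\sum_k f_k(\mu)}$, being scale-invariant, attains a positive minimum $\theta$ on the compact slice $\{\sum\mu_k=1\}\cap\overline\Gamma$ after checking it does not degenerate on $\partial\Gamma$ using part (2) of Theorem \ref{yuan-k+1} near the boundary faces. \textbf{The main obstacle} is precisely this last point: showing the ratio is bounded away from $0$ uniformly up to $\partial\Gamma$, where some $f_k$ may blow up or vanish; this is where the sharp constant $\varrho_\Gamma$ (rather than the cruder $1+n\kappa_\Gamma\vartheta_\Gamma$ of \eqref{tau-alpha}) must be used, and I would handle it by a compactness/normalization argument combined with the boundary behaviour encoded in \eqref{homogeneous-1-buchong2} and Theorem \ref{yuan-k+1}(2), reducing the type-1 estimate to the inequality $f_1(\mu)+\cdots+f_{1+\kappa_\Gamma}(\mu)\ge c\,(\text{rest})$ that holds because $(0,\dots,0,1,\dots,1)$ with $\kappa_\Gamma$ zeros lies in $\Gamma$ while $\varrho<\varrho_\Gamma$ keeps us off the corresponding wall.
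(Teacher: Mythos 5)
Your setup is right: the chain--rule identity $\frac{\partial\tilde f}{\partial\lambda_i}(\lambda)=\frac{1}{n-\varrho}\bigl(\sum_{k}f_k(\mu)-\varrho f_i(\mu)\bigr)$ is correct, and it correctly reduces the claim to a uniform lower bound on $\sum_k f_k(\mu)-\varrho f_1(\mu)$ in terms of $\sum_k f_k(\mu)$, with $\mu_1\le\cdots\le\mu_n$. But the proof does not close in the substantive case ($\varrho>0$, $\Gamma$ of type 1), and you say so yourself: the ``compactness/normalization argument combined with the boundary behaviour'' is exactly the step where the hypothesis $\varrho<\varrho_\Gamma$ must enter, and you never carry it out. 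As written, the compact-slice argument fails because $f$ and its gradient need not extend continuously to $\partial\Gamma$, and the quantitative input you have from Theorem \ref{yuan-k+1}(2) involves $1+n\kappa_\Gamma\vartheta_\Gamma$, which by Remark \ref{RK1-1} is a \emph{lower} bound for $\varrho_\Gamma$ --- the wrong direction, as you yourself noticed mid-argument. The type 2 subcase is also not clean: the constant $\theta_0$ of Lemma \ref{lemma5.11} is not quantitatively tied to $\varrho_\Gamma$, so $1-\varrho\bigl(1-(n-1)\theta_0\bigr)$ need not be positive for all $\varrho<\varrho_\Gamma$.

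The missing idea is to apply Lemma \ref{lemma5.11} to $(\tilde f,\tilde\Gamma)$ rather than to $(f,\Gamma)$. Since the change of variables is linear and invertible, $(\tilde f,\tilde\Gamma)$ inherits \eqref{concave} and \eqref{addistruc} from $(f,\Gamma)$, so by that lemma the asserted uniform ellipticity of $\tilde f$ is \emph{equivalent} to the purely geometric statement that $\tilde\Gamma$ is of type 2, i.e.\ $(0,\dots,0,1)\in\tilde\Gamma$. The hypothesis then enters in one line: $\varrho<\varrho_\Gamma$ together with \eqref{varrho1-Gamma} gives, by convexity and openness of $\Gamma$, that $(1,\dots,1,1-\varrho)\in\Gamma$, and the image of this vector under the map \eqref{map1} is $(0,\dots,0,n-\varrho)$, a positive multiple of $(0,\dots,0,1)$. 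That is the paper's entire proof; no estimate on the $(f,\Gamma)$ side is needed. This viewpoint also makes the sharpness transparent: at $\varrho=\varrho_\Gamma$ the vector $(0,\dots,0,1)$ lands on $\partial\tilde\Gamma$, so $\tilde\Gamma$ becomes type 1 and full uniform ellipticity fails by the converse direction of Lemma \ref{lemma5.11}.
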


\begin{proof}
	Since $\varrho<\varrho_\Gamma$, we get $(1,\cdots,1,1-\varrho)\in\Gamma$. This is equivalent to $(0,\cdots,0,1)\in\tilde{\Gamma}$, as required.
\end{proof}

As a result, 
one can  reduce  prescribed curvature equation \eqref{main-equ1} 
to a uniformly elliptic equation for conformal deformation of Schouten tensor. To do this, let $(\tilde{f},\tilde{\Gamma})$  be as in \eqref{map1}-\eqref{def-f},  let 
$\varrho=\frac{n-2}{\tau-1}$ be as in \eqref{beta-gamma-A-3}.
We can check  
\begin{equation}
	\label{check-1}
	\begin{aligned}
		\mathrm{tr}\left(g^{-1}(-A_g)\right)g
		-\varrho \left(-A_g\right)
		=  \frac{n-2}{\alpha(\tau-1)}A_{g}^{\tau,\alpha}.
		\nonumber
	\end{aligned}
\end{equation}
Thus,  when $f$ is  homogeneous of degree $\mathrm{\varsigma}$, 
equation
\eqref{main-equ1} is equivalent to 
\begin{equation}
	\label{mainequ-02-0}
	\begin{aligned}
  \tilde{f}(\lambda(-{g}^{-1}A_{\tilde{g}})) = 
		\left( \frac{n-2}{\alpha(n\tau+2-2n)}\right)^\mathrm{\varsigma} \psi e^{2\mathrm{\varsigma} u},
	\end{aligned}
\end{equation} 
in which $\tilde{f}$ is of fully uniform ellipticity in $\tilde{\Gamma}$, according to Proposition \ref{key-lemma1}. 

 \begin{remark}
 	Let $\varrho_\Gamma$ be as in \eqref{varrho1-Gamma}.
	Clearly $1\leq \varrho_\Gamma\leq n$. In addition,
	\begin{itemize}
		\item  $\varrho_\Gamma=1$ if and only if $\Gamma=\Gamma_n$.
		\item 
		$\varrho_{\Gamma}=n$ if and only if $\Gamma=\Gamma_1$.
	\end{itemize}
	
	
\end{remark}

\begin{remark}
	\label{remark-3.3}
	Due to  the obstructions to the existence of  complete metrics with (uniformly)
	 positive scalar curvature, one could not expect the solvability of 
	\begin{equation}
		\label{equ2-Schouten}
		\begin{aligned}
			\tilde{f}(\lambda(\tilde{g}^{-1} A_{\tilde{g}})) =\psi  
		\end{aligned}
	\end{equation} in the conformal class of complete admissible metrics. 
	On the other hand, 
	the Dirichlet problem for \eqref{equ2-Schouten} was solved by Guan \cite{Guan2007AJM}, given an admissible subsolution.
\end{remark}

\subsection{Construction of admissible metrics}
\label{subsec1-construction-admissiblemetric}

According to Proposition \ref{key-lemma1}, 
 Theorems \ref{existence1-compact-2} and \ref{thm1-shouten}, it requires only to construct admissible conformal metrics.
The constructions are given in \cite{yuan-PUE1}. We present them here for completeness.

Given a $C^2$-smooth  function $w$ on $M$, we 
denote the critical set by 
$\mathcal{C}(w)=\{x\in  M: dw(x)=0\}.$
Also we use the notation denoted in \eqref{beta-gamma-A2} and \eqref{beta-gamma-A-3}.

\begin{proposition}
	\label{thm2-construction}
	Let $(\alpha,\tau)$ satisfy 
	\eqref{tau-alpha-sharp}.
	Assume $M$ is a closed connected manifold of dimension $n\geq 3$ and suppose a  $C^2$ {\em quasi-admissible} Riemannian metric $g$.  
	Then there exists an admissible metric being conformal to $g$.
\end{proposition}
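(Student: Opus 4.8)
The goal is to produce, on a closed manifold carrying a quasi-admissible metric $g$, a genuinely admissible conformal metric $\tilde g = e^{2u}g$, i.e.\ one with $\lambda(g^{-1}A^{\tau,\alpha}_{\tilde g})\in\Gamma$ everywhere. Using \eqref{conformal-formula1} with $\varrho=\frac{n-2}{\tau-1}$ as in \eqref{beta-gamma-A-3}, this amounts to finding $u$ with $\lambda(g^{-1}V[u])\in\Gamma$ on all of $M$, where $V[u]=\Delta u\, g-\varrho\nabla^2 u+\gamma|\nabla u|^2 g+\varrho\, du\otimes du + A$. The point of the assumption \eqref{tau-alpha-sharp} is precisely that it forces the sign of $\varrho$ and the size constraint $\varrho<\varrho_\Gamma$ (for $\alpha=1$) or $\varrho<0$, i.e.\ $\varrho$ effectively $<\varrho_\Gamma$ after reflection (for $\alpha=-1$); this is what makes the linear-in-$\nabla^2 u$ part of $V$ behave like (a reflection of) the conformal Schouten operator that is fully uniformly elliptic on $\tilde\Gamma$ by Proposition~\ref{key-lemma1}.

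\textbf{Step 1: reduce to adding a large multiple of a good test function.} Since $g$ is quasi-admissible, $\lambda(g^{-1}A)\in\bar\Gamma$ on $\bar M$ and $\lambda(g^{-1}A)\in\Gamma$ at some $p_0$. I would look for $u = s\,\phi$ with $s>0$ large and $\phi$ a carefully chosen fixed $C^2$ function, and examine $V[s\phi]$. The quadratic terms $\gamma s^2|\nabla\phi|^2 g+\varrho s^2\, d\phi\otimes d\phi$ dominate for large $s$ wherever $d\phi\neq0$: their eigenvalues are $s^2$ times a fixed symmetric endomorphism whose eigenvalues (in an orthonormal frame diagonalizing $d\phi\otimes d\phi$) are $\gamma|\nabla\phi|^2$ with multiplicity $n-1$ and $(\gamma+\varrho)|\nabla\phi|^2$ in the gradient direction. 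One checks — this is where \eqref{tau-alpha-sharp} enters quantitatively — that $\bigl(\gamma+\varrho,\gamma,\dots,\gamma\bigr)$, normalized, is a positive multiple of $(1-\varrho,1,\dots,1)$ (for $\alpha=1$) or of its reflection landing inside $\Gamma$ (for $\alpha=-1$, where $\tau<1$ makes $\varrho<0$ and $\Gamma\supseteq\Gamma_n$ forces $-e_n$-type vectors in); hence this leading tensor has eigenvalue vector in $\Gamma$ at every point where $d\phi\neq0$, and the remaining linear terms $s(\Delta\phi\,g-\varrho\nabla^2\phi)$ plus the bounded term $A$ are a lower-order $O(s)$ perturbation that cannot leave the open cone $\Gamma$ for $s$ large.

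\textbf{Step 2: handle the critical set of $\phi$.} The construction breaks down where $d\phi=0$, so $\phi$ must be chosen with controlled critical set. On a closed manifold one cannot take $\phi$ critical-point-free (unlike Lemma~\ref{lemma-diff-topologuy}, which needs a boundary). Instead I expect to use the quasi-admissibility: near the point $p_0$ where $\lambda(g^{-1}A)\in\Gamma$ (open condition, so on a neighborhood $U$ of $p_0$), admissibility of $g$ itself already holds, and one only needs to fix it away from $U$; there one can take $\phi$ with no critical points, glued to a constant on $U$ via a cutoff, so that on the transition region $\phi$ is still nonconstant with $d\phi\neq0$ where needed, and on $U$ one uses instead that $\lambda(g^{-1}A)$ is already interior and that the $s^2$-terms vanish there but the contribution of $A$ keeps $\lambda$ in $\Gamma$. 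More carefully, I would build $\phi$ so that $\mathcal C(\phi)\subset U$ and combine: on $M\setminus U$, Step~1 gives $\lambda(g^{-1}V[s\phi])\in\Gamma$; on $U$, one arranges $V[s\phi]$ to remain a small perturbation of $A$, using that $\phi$ is (near-)constant there and that $\lambda(g^{-1}A)$ sits in the open cone $\Gamma$ on $\bar U$.

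\textbf{Main obstacle.} The delicate point is the matching on the critical set / transition region: one needs a single $u$ (one $s$) that simultaneously works where the gradient dominates and where it degenerates, and the degeneracy locus is forced to be nonempty by the topology of a closed manifold. The quantitative core — verifying that the normalized leading eigenvalue vector $(\gamma+\varrho,\gamma,\dots,\gamma)$ lies in $\Gamma$ exactly under \eqref{tau-alpha-sharp}, in both the $\alpha=1$ case (via $\varrho<\varrho_\Gamma\Leftrightarrow(1,\dots,1,1-\varrho)\in\Gamma$) and the $\alpha=-1$ case (via $\tau<1$ and $\Gamma_n\subseteq\Gamma$, including the borderline $\tau=1$ when $\Gamma$ is type~2, where $\varrho=\infty$ is replaced by the direct statement $\Delta\phi\,g-\nabla^2\phi$-type terms and Lemma~\ref{lemma5.11}) — is a computation I would expect to be routine once the cone identities for $\varrho_\Gamma$ are in hand, but it is the conceptual heart that explains why \eqref{tau-alpha-sharp} is the sharp threshold.
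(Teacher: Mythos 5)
Your overall strategy --- push the critical set of the test function into a neighbourhood of $p_0$ where $\lambda(g^{-1}A)$ is already interior to $\Gamma$, and let the gradient terms of $V$ dominate elsewhere --- is the same as the paper's (which performs the localization with a Morse function and Milnor's homogeneity lemma rather than a cutoff). However, the quantitative core of your Step 1 fails, and with it the linear ansatz $u=s\phi$. For $u=s\phi$ the leading term of $V[u]$ as $s\to\infty$ is $s^2\bigl(\gamma|\nabla\phi|^2g+\varrho\,d\phi\otimes d\phi\bigr)$, whose eigenvalue vector is $s^2|\nabla\phi|^2(\gamma,\dots,\gamma,\gamma+\varrho)=s^2|\nabla\phi|^2\,\gamma\,(1,\dots,1,\tfrac{\tau}{\tau-2})$. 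This is \emph{not} a positive multiple of $(1,\dots,1,1-\varrho)$ as you assert, and under \eqref{tau-alpha-sharp} alone it need not lie in $\bar\Gamma$. Concretely, for $n=3$, $\alpha=1$, $\Gamma=\mathcal{P}_2=\{\lambda:\lambda_i+\lambda_j>0,\ i\neq j\}$ (so $\varrho_\Gamma=2$) and $\tau=1.6$, condition \eqref{tau-alpha-sharp} holds but $(\gamma,\gamma,\gamma+\varrho)=(-\tfrac13,-\tfrac13,\tfrac43)\notin\bar\Gamma$; likewise for $\alpha=-1$, $\Gamma=\Gamma_n$ and $0<\tau<1$ one has $\gamma+\varrho=\tfrac{\tau(n-2)}{2(\tau-1)}<0$. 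Membership of $(\gamma,\dots,\gamma,\gamma+\varrho)$ in $\bar\Gamma$ is exactly the extra hypothesis \eqref{key-condition1-construct} of Proposition \ref{lemma5-main}, verified under \eqref{tau-alpha-4}, which is strictly stronger than \eqref{tau-alpha-sharp}. So your argument, as written, proves the proposition only under \eqref{tau-alpha-4}.

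The repair is the paper's exponential ansatz $\underline{u}=e^{Nv}$ with $v\le-1$. Then $\nabla^2\underline{u}=Ne^{Nv}\nabla^2v+N^2e^{Nv}\,dv\otimes dv$, so the \emph{linear} part $\Delta\underline{u}\,g-\varrho\nabla^2\underline{u}$ itself contributes the quadratic-in-gradient tensor $N^2e^{Nv}\bigl(|\nabla v|^2g-\varrho\,dv\otimes dv\bigr)$, whose eigenvalue vector $N^2e^{Nv}|\nabla v|^2(1,\dots,1,1-\varrho)$ lies in $\Gamma$ precisely because $\varrho<\varrho_\Gamma$. The genuinely quadratic terms $\gamma|\nabla\underline{u}|^2g+\varrho\,d\underline{u}\otimes d\underline{u}$ are of order $N^2e^{2Nv}$, hence smaller by the factor $e^{Nv}\le e^{-N}$, and the Hessian terms are of order $Ne^{Nv}$; both are absorbed using the openness of $\Gamma$ for $N\gg1$. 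This is the content of \eqref{key3-2}--\eqref{key4}, and it is exactly the step at which \eqref{tau-alpha-sharp}, rather than \eqref{tau-alpha-4}, suffices. Your Step 2 (localizing the degeneracy of $d v$ into the good ball) is sound in spirit and matches the paper's use of the homogeneity lemma.
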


\begin{proof}

	By 
	the openness of $\Gamma$ and the assumption 
	of quasi-admissible metric, there exists a uniform positive constant $r_0$
	such that  
	\begin{equation}
		\label{key1-metric}
		\begin{aligned}
			\lambda(g^{-1}A)\in \Gamma \mbox{ in } \overline{B_{r_0}(p_0)}.
		\end{aligned}
	\end{equation}
	
	Take a smooth Morse function $w$ with
	the critical set  
	$$\mathcal{C}(w)=\{p_1,\cdots, p_m, p_{m+1}\cdots p_{m+k}\}$$
	among which $p_1,\cdots, p_m$ are all the critical points  being in $M\setminus \overline{B_{r_0/2}(p_0)}$. 
	Pick $q_1, \cdots, q_m\in {B_{r_0/2}(p_0)}$ but not the critical point of $w$. By the homogeneity lemma,
	one can find a diffeomorphism
	$h: M\to M$, which is smoothly isotopic to the identity, such that 
	\begin{itemize}
		\item $h(p_i)=q_i$, $1\leq i\leq m$.
		\item $h(p_i)=p_i$, $m+1\leq i\leq m+k$.
	\end{itemize}
	Then we obtain a   Morse function 
	\begin{equation}
		\label{Morse1-construction}
		\begin{aligned} 
			v=w\circ h^{-1}.
		\end{aligned}
	\end{equation} 
	One can check that 
	\begin{equation}
		\label{key2}
		\begin{aligned}
			\mathcal{C}(v)=\{q_1,\cdots, q_m, p_{m+1}\cdots p_{m+k}\}\subset \overline{B_{r_0/2}(p_0)}.
		\end{aligned}
	\end{equation}
	
	Next we complete the proof. Assume $v\leq -1$.   	Take $\underline{u}=e^{Nv},$ $\underline{g}=e^{2\underline{u}}g,$
	then \begin{equation}
		\label{key3-2}
		\begin{aligned}
			V[\underline{u}]= A+
			N^2e^{Nv}\left((\Delta v g-\varrho\nabla^2v)/N+(1+\gamma e^{Nv})|\nabla v|^2 g+\varrho (e^{Nv}-1)dv\otimes dv\right). 
		\end{aligned}
	\end{equation}

	Notice that  
	\begin{equation}
		\label{key3}
		\begin{aligned}
			\,&\lambda(g^{-1} ((1+\gamma e^{Nv})|\nabla v|^2 g+\varrho (e^{Nv}-1)dv\otimes dv ))\\
			=\,& |\nabla v|^2  \left[(1, \cdots, 1, 1-\varrho)+  e^{Nv}  (\gamma,\cdots,\gamma,\gamma+\varrho)\right].
		\end{aligned}
	\end{equation} 
  Thus
	\begin{equation}
		\label{key4}
		\begin{aligned}
			(1, \cdots, 1, 1-\varrho)+e^{Nv}(\gamma,\cdots,\gamma,\gamma+\varrho)\in \Gamma  \mbox{ for  } N\gg1.
		\end{aligned}
	\end{equation}
	(Notice $v\leq -1$ and $\Gamma$ is open).
	
	\noindent{\bf Case 1}: $x\in \overline{B_{r_0}(p_0)}$. By \eqref{key1-metric} and the openness of $\Gamma$, 
	\begin{equation}
		\begin{aligned}
			\lambda(g^{-1}(A+
			N e^{Nv} (\Delta v g-\varrho\nabla^2v)) )\in \Gamma \mbox{ in } \overline{B_{r_0}(p_0)}.
		\end{aligned}
	\end{equation}
	Combining  \eqref{key3} and \eqref{key4}, 
	\begin{equation}
		\begin{aligned}
			\lambda(g^{-1}V[\underline{u}])\in\Gamma \mbox{ in } \overline{B_{r_0}(p_0)}.  \nonumber
		\end{aligned}
	\end{equation}
	
	\noindent{\bf Case 2}:  $x\notin \overline{B_{r_0}(p_0)}$. By \eqref{key2} there is a uniform positive constant $m_0$ such that $|\nabla v|^2\geq m_0$ in $M\setminus\overline{B_{r_0}(p_0)}$. By \eqref{key3}, \eqref{key4}, 
	and the openness of $\Gamma$, as well as the existence of quasi-admissible metric 
	\begin{equation}
		\begin{aligned}
			\lambda(g^{-1}V[\underline{u}])\in\Gamma \mbox{ in } M\setminus\overline{B_{r_0}(p_0)}.  \nonumber
		\end{aligned}
	\end{equation}
	This completes the proof.
\end{proof}


Based on Lemma \ref{lemma-diff-topologuy}, we have an analogue of Proposition \ref{thm2-construction} for manifolds with boundary.
\begin{proposition}
	\label{thm3-construction}
	Let $(\alpha,\tau)$ satisfy 
	\eqref{tau-alpha-sharp}.
	Suppose $(M,g)$ is a compact connected Riemannian manifold with smooth boundary and carries a $C^2$ {\em pseudo-admissible} conformal metric. 
	Then there is an admissible conformal metric.
\end{proposition}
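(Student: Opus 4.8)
The plan is to reproduce the argument of Proposition~\ref{thm2-construction}, but with Lemma~\ref{lemma-diff-topologuy} playing the role of the homogeneity lemma. The advantage is that the function supplied by Lemma~\ref{lemma-diff-topologuy} has \emph{no} critical point at all, so the local analysis near a critical set (``Case~1'' in the proof of Proposition~\ref{thm2-construction}) disappears, and only the ``Case~2'' computation, now valid on all of $\bar M$, remains.

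First I would normalize. Admissibility of a conformal metric $\tilde g=e^{2u}g$ depends only on $\tilde g$, and any conformal metric to the given pseudo-admissible metric $e^{2\underline u_0}g$ is again a conformal metric to $g$; so, using \eqref{computation1-add}, we may replace $g$ by that pseudo-admissible metric and assume $\lambda(g^{-1}A)\in\bar\Gamma$ on $\bar M$, where $A=\frac{n-2}{\alpha(\tau-1)}A_g^{\tau,\alpha}$ as in \eqref{beta-gamma-A-3}. Condition \eqref{tau-alpha-sharp} guarantees $\tau\neq1$, that $\alpha(\tau-1)>0$ in both cases (hence $\lambda(g^{-1}A)\in\bar\Gamma$ is precisely pseudo-admissibility), and that $\varrho=\frac{n-2}{\tau-1}$ satisfies \eqref{assumption-4}; in particular $(1,\cdots,1,1-\varrho)\in\Gamma$, just as in the proof of Proposition~\ref{key-lemma1}.

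Next, Lemma~\ref{lemma-diff-topologuy} furnishes a smooth $v$ on $\bar M$ with $dv\neq0$ everywhere; compactness gives $m_0>0$ with $|\nabla v|^2\geq m_0$ on $\bar M$, and after subtracting a constant we may take $v\leq-1$. Fix $N$ large and set $\underline u=e^{Nv}$, $\underline g=e^{2\underline u}g$. Then \eqref{key3-2} gives
\[
 V[\underline u]=A+N^2e^{Nv}\Bigl((\Delta v\,g-\varrho\nabla^2 v)/N+(1+\gamma e^{Nv})|\nabla v|^2 g+\varrho(e^{Nv}-1)\,dv\otimes dv\Bigr),
\]
and, as in \eqref{key3}, the eigenvalues of $g^{-1}$ acting on the last two terms are $|\nabla v|^2\bigl[(1,\cdots,1,1-\varrho)+e^{Nv}(\gamma,\cdots,\gamma,\gamma+\varrho)\bigr]$. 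Since $v\leq-1$, these vectors lie, for $N\gg1$, within $o(1)$ of the compact set $\{t\,(1,\cdots,1,1-\varrho):m_0\leq t\leq\max_{\bar M}|\nabla v|^2\}\subset\Gamma$, hence at a uniform positive distance from $\partial\Gamma$; adding the term $(\Delta v\,g-\varrho\nabla^2 v)/N$, which is $O(1/N)$ in operator norm, keeps the eigenvalues of $g^{-1}$ times the whole parenthesized tensor in $\Gamma$, uniformly on $\bar M$, for $N$ large. Multiplying by the positive scalar $N^2e^{Nv}$ and using $\lambda(g^{-1}A)\in\bar\Gamma$ together with the standard fact $\{T:\lambda(T)\in\bar\Gamma\}+\{T:\lambda(T)\in\Gamma\}\subseteq\{T:\lambda(T)\in\Gamma\}$ (from convexity of $\Gamma$ and $\Gamma_n\subseteq\Gamma$), we obtain $\lambda(g^{-1}V[\underline u])\in\Gamma$ on $\bar M$, i.e.\ $\underline g$ is admissible.

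The hard part, such as it is, will be only the uniformity — in $N$ and over $\bar M$ — of the distance-to-$\partial\Gamma$ estimate; this is routine since $\bar M$ is compact and $|\nabla v|$, $|\nabla^2 v|$, $|A|$ are bounded. Compared with Proposition~\ref{thm2-construction} the proof is in fact easier: the absence of critical points removes both the homogeneity lemma and the ball $B_{r_0}(p_0)$, leaving only the globally valid ``Case~2'' computation, so no genuinely new difficulty arises.
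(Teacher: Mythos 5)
Your proof is correct and is exactly the argument the paper intends: Proposition \ref{thm3-construction} is stated as the analogue of Proposition \ref{thm2-construction} obtained by substituting the critical-point-free function of Lemma \ref{lemma-diff-topologuy} for the Morse function, which collapses the argument to the global ``Case 2'' computation via \eqref{key3-2}--\eqref{key4} (compare also the proof of Proposition \ref{lemma5-main}). Your normalization $v\leq -1$, the verification that \eqref{tau-alpha-sharp} yields $\varrho<\varrho_\Gamma$ and $\alpha(\tau-1)>0$, and the use of $\bar\Gamma+\Gamma\subseteq\Gamma$ to absorb the pseudo-admissible term $A$ all match the paper's scheme.
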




Next, we construct a complete noncompact admissible metric satisfying an asymptotic property.
\begin{proposition}\label{thm4-construction} 
	Let   $(f,\Gamma)$ and $(\alpha,\tau)$ be as in Theorem \ref{thm1}. 
	Suppose $(M,g)$ is a complete noncompact Riemannian manifold.  Given a positive smooth function $\psi$, we assume $(M,g)$ carries a $C^2$  \textit{complete pseudo-admissible} conformal metric subject to \eqref{key-assum1}. Then there is a  complete noncompact admissible conformal metric $\hat{g}$ with  
	\begin{equation}
		\label{key-assum1-2}
		\begin{aligned}
			{f(\lambda(\hat{g}^{-1}A_{\hat{g}}^{\tau,\alpha}))}   \geq \Lambda_1 {\psi} \mbox{ in } M 
		\end{aligned}
	\end{equation}
	for some positive constant $\Lambda_1$. Moreover, $\hat{u}\geq \underline{u}-C_0$ for some constant $C_0>0$, where  $\underline{u}$ is as in \eqref{key-assum1}.
\end{proposition}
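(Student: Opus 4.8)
The plan is to leave the prescribed asymptotic metric $\underline g=e^{2\underline u}g$ unchanged near infinity, where it is automatically strictly admissible, and to repair it only on a compact neighbourhood of $K_0$ by the localized exponential deformation used in the proof of Proposition \ref{thm2-construction}. First I record the observation that, by \eqref{key-assum1} and \eqref{homogeneous-1-buchong2}, $\lambda(\underline g^{-1}A_{\underline g}^{\tau,\alpha})\in\Gamma$ throughout $M\setminus K_0$: there $f(\lambda(\underline g^{-1}A_{\underline g}^{\tau,\alpha}))\geq\Lambda_0\psi>0$ while $f=0$ on $\partial\Gamma$, so the pseudo-admissible spectrum cannot touch $\partial\Gamma$. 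Thus only $K_0$ needs attention. Fix a connected compact domain $\Omega\subset M$ with smooth boundary, an open set $U$ with $K_0\subset U\subset\bar U\subset\mathrm{int}\,\Omega$, a cut-off $\eta\in C^\infty_c(\mathrm{int}\,\Omega)$ with $0\leq\eta\leq1$ and $\eta\equiv1$ on $\bar U$, and, via Lemma \ref{lemma-diff-topologuy}, a function $v\in C^\infty(M)$ with $dv\neq0$ on $\bar\Omega$ normalized so that $v\leq-1$ on $\Omega$. The candidate is $\hat g=e^{2\phi}\underline g$, $\hat u=\underline u+\phi$, with $\phi=\epsilon\,\eta\,e^{Nv}\geq0$; since $\phi\geq0$ this already gives $\hat u\geq\underline u$ and $\hat g\geq\underline g$, so $\hat g$ is complete and noncompact.

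Repeating the computation \eqref{beta-gamma-A2}--\eqref{key4} with $\underline g$ as the background metric (so that $\hat g^{-1}A_{\hat g}^{\tau,\alpha}$ is a positive multiple of $\underline g^{-1}V_{\underline g}[\phi]$), one gets on $\bar U$, where $\eta\equiv1$,
\begin{equation}
\underline g^{-1}V_{\underline g}[\phi]=\tfrac{n-2}{\alpha(\tau-1)}\underline g^{-1}A_{\underline g}^{\tau,\alpha}+\epsilon N^2e^{Nv}\,\underline g^{-1}\!\left(\tfrac1N(\Delta_{\underline g}v\,\underline g-\varrho\nabla^2_{\underline g}v)+(1+\gamma\epsilon e^{Nv})|\nabla_{\underline g}v|^2\underline g+\varrho(\epsilon e^{Nv}-1)\,dv\otimes dv\right),\nonumber
\end{equation}
with $\varrho=\frac{n-2}{\tau-1}$ and $\gamma=\frac{(\tau-2)(n-2)}{2(\tau-1)}$. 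By \eqref{tau-alpha-sharp} one has $\varrho<\varrho_\Gamma$ and $\varrho\neq0$, hence $(1,\dots,1,1-\varrho)\in\Gamma$; since $|\nabla_{\underline g}v|^2$ is pinched between positive constants on the compact set $\bar U$ and $v\leq-1$ makes $\epsilon e^{Nv}\leq e^{-N}$ for all $\epsilon\in(0,1]$, the $\underline g^{-1}$-spectrum of the parenthesis converges uniformly on $\bar U$, as $N\to\infty$, to $|\nabla_{\underline g}v|^2(1,\dots,1,1-\varrho)$, which ranges in a compact subset of $\Gamma$. Hence I can fix $N=N_0$ so large that this spectrum lies in $\Gamma$ on $\bar U$ for every $\epsilon\in(0,1]$; since the first summand has spectrum in $\bar\Gamma$ by pseudo-admissibility, and the symmetric tensors with spectrum in $\Gamma$ form an open convex cone (so adding a $\bar\Gamma$-spectrum tensor to a $\Gamma$-spectrum tensor yields a $\Gamma$-spectrum tensor), $\lambda(\underline g^{-1}V_{\underline g}[\phi])\in\Gamma$ on $\bar U$.

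With $N=N_0$ frozen, $\phi=\epsilon\,\eta\,e^{N_0v}$ is $\epsilon$ times a fixed smooth function, so on the compact set $\bar\Omega\setminus U\subset M\setminus K_0$ — where $\lambda(\underline g^{-1}A_{\underline g}^{\tau,\alpha})$ stays at a uniform positive distance from $\partial\Gamma$ — we have $\|\underline g^{-1}V_{\underline g}[\phi]-\frac{n-2}{\alpha(\tau-1)}\underline g^{-1}A_{\underline g}^{\tau,\alpha}\|_{C^0}=O(\epsilon)$, so $\lambda(\underline g^{-1}V_{\underline g}[\phi])\in\Gamma$ there once $\epsilon=\epsilon_0$ is small; and on $M\setminus\Omega$, $\phi\equiv0$ so $\hat g=\underline g$ is strictly admissible. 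Therefore $\lambda(\hat g^{-1}A_{\hat g}^{\tau,\alpha})\in\Gamma$ on all of $M$, i.e.\ $\hat g$ is admissible. Finally \eqref{key-assum1-2}: outside $\Omega$, $f(\lambda(\hat g^{-1}A_{\hat g}^{\tau,\alpha}))=f(\lambda(\underline g^{-1}A_{\underline g}^{\tau,\alpha}))\geq\Lambda_0\psi$; on the compact $\bar\Omega$, $f(\lambda(\hat g^{-1}A_{\hat g}^{\tau,\alpha}))$ is continuous and strictly positive, hence $\geq\delta_1>0$, while $\psi\leq\max_{\bar\Omega}\psi$, so $f(\lambda(\hat g^{-1}A_{\hat g}^{\tau,\alpha}))\geq(\delta_1/\max_{\bar\Omega}\psi)\,\psi$ there. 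Taking $\Lambda_1=\min\{\Lambda_0,\,\delta_1/\max_{\bar\Omega}\psi\}$ and $C_0=1$ completes the proof.

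The crux is that one deformation must do two jobs simultaneously: near $K_0$ it must push the merely pseudo-admissible spectrum $\bar\Gamma$ strictly into $\Gamma$ — this is exactly where the sharp condition \eqref{tau-alpha-sharp}, i.e.\ $\varrho<\varrho_\Gamma$, is indispensable, since it is what makes the limiting direction $(1,\dots,1,1-\varrho)$ interior to $\Gamma$ — while on the transition annulus of the cut-off and beyond it must not damage the already-optimal asymptotic data of $\underline g$. These are reconciled by the hierarchical choice of parameters ($N$ large first, uniformly in $\epsilon$, for the estimate on $\bar U$; then $\epsilon$ small, with $N$ frozen, for the annulus) together with the fact that the annulus and the exterior are precisely where $\underline g$ is already uniformly strictly admissible, so an $O(\epsilon)$ perturbation there is harmless.
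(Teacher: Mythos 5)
Your proof is correct and follows essentially the same route as the paper's: reduce to the case $\underline{u}=0$ (i.e.\ take $\underline{g}$ as background), observe that \eqref{key-assum1} together with $f=0$ on $\partial\Gamma$ forces strict admissibility of $\underline{g}$ off $K_0$, and then repair the metric near $K_0$ by the cut-off exponential deformation of a critical-point-free function, where $\varrho<\varrho_\Gamma$ guarantees the limiting spectral direction $(1,\dots,1,1-\varrho)$ lies in $\Gamma$. The only cosmetic difference is your two-parameter hierarchy ($N$ large first, then $\epsilon$ small on the transition annulus), whereas the paper places the cutoff inside the exponent and lets a single large $N$ handle both regions, since $N^2e^{Nh}\to 0$ wherever $h\leq -1$.
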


\begin{proof}
	Without loss of generality, by \eqref{computation1-add} one only consider the case $\underline{u}=0$. 
	Let $K_0$ be  as in \eqref{key-assum1} the compact subset.  
	From \eqref{key-assum1}, \eqref{homogeneous-1-buchong2} and the positivity of $\psi$, we know $g$ is admissible when restricted to $M\setminus K_0$.  
	
	Pick  two $n$-dimensional compact 
	submanifolds $M_1$, $M_2$   with smooth boundary  and with $K_0\subset\subset M_1\subset\subset M_2$. 
	Let $v$ be a smooth 
	function  with $dv\neq0$ and 
	$v\leq 0$ 
	on $\bar M_2$ (by Lemma \ref{lemma-diff-topologuy}). Similar to the proof of Proposition \ref{thm2-construction}, 
	as in Proposition \ref{thm3-construction}, 
	$e^{2\underline{w}}g$ 
	is 
	admissible on $\bar M_2$ if  $\underline{w}=e^{t(v-1)}$, $t\gg1$. 
	Choose a cutoff function satisfying
	\begin{equation}
		\begin{aligned}
			\zeta\in C^{\infty}_0(M_2), \, 0\leq\zeta\leq 1 \mbox{ and } \zeta\Big|_{M_1}=1. \nonumber
		\end{aligned}
	\end{equation}   
	Take $\hat{g}=e^{2\hat{u}}g$ where $\hat{u}=e^{Nh}$,
	\begin{equation}
		h=
		\begin{cases}
			\zeta  v-1\,& \mbox{ if } x\in M_2,\\
			-1 \,& \mbox{ otherwise.}\nonumber
		\end{cases}
	\end{equation} 
	Notice $g$ is admissible in $M\setminus K_0$, and $\hat{g}$ is admissible when restricted to $M_1\cup (M\setminus M_2)$.
	Similar to the proof of Proposition \ref{thm2-construction}, we can check for $N\gg1$ that $\hat{g}$ is an  admissible metric and also satisfies \eqref{key-assum1-2}.
	
\end{proof}

The pseudo-admissible metric assumption imposed in Proposition \ref{thm3-construction} can be further removed  in some cases.
\begin{proposition}
\label{lemma5-main}
Let $(M,g)$ be a compact connected Riemannian manifold with smooth boundary.
Given $(\alpha,\tau)$ satisfying 
\eqref{tau-alpha-sharp}.
Let  $\gamma$ and $\varrho$ be as in  \eqref{beta-gamma-A-3}. Assume  
\begin{equation}
	\label{key-condition1-construct}
	(\gamma, \cdots,\gamma,\gamma+\varrho)\in\bar{\Gamma}.
\end{equation}
Then 
there exists a smooth conformal  admissible metric  
on $\bar M$.
\end{proposition}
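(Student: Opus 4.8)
The plan is to produce the required metric by the same blow-up recipe that underlies Propositions~\ref{thm2-construction} and \ref{thm3-construction}, but with the role of the auxiliary function reversed so that the conformal factor \emph{dominates}, rather than merely perturbs, the background curvature. Recall from \eqref{beta-gamma-A2}--\eqref{beta-gamma-A-3} that $V[u]=\tfrac{n-2}{\alpha(\tau-1)}A^{\tau,\alpha}_{e^{2u}g}$; under \eqref{tau-alpha-sharp} the scalar $\tfrac{\alpha(\tau-1)}{n-2}$ is positive, so it suffices to exhibit $\underline u\in C^\infty(\bar M)$ with $\lambda(g^{-1}V[\underline u])\in\Gamma$ throughout $\bar M$. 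By Lemma~\ref{lemma-diff-topologuy} I would pick a smooth $v$ on $\bar M$ with $dv\neq0$ everywhere and, after adding a constant, assume $v\geq1$; then set $\underline u=e^{Nv}$, $\underline g=e^{2\underline u}g$, with $N\gg1$ to be fixed. In Propositions~\ref{thm2-construction}--\ref{thm3-construction} one takes $v\leq-1$, so $e^{Nv}\to0$ and a pseudo-admissible background is needed to absorb $A$; here $\underline u=e^{Nv}$ blows up and the deformation terms will overwhelm $A$, which is exactly how the pseudo-admissibility hypothesis gets removed.

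First I would compute $V[\underline u]$ (a direct computation; cf.\ \eqref{key3-2}) and collect it by exponential order:
\[
\begin{aligned}
V[\underline u]=A&+Ne^{Nv}\big(\Delta v\,g-\varrho\nabla^2v\big)+N^2e^{Nv}\big(|\nabla v|^2g-\varrho\,dv\otimes dv\big)\\
&+N^2e^{2Nv}\big(\gamma|\nabla v|^2g+\varrho\,dv\otimes dv\big).
\end{aligned}
\]
Write $V[\underline u]=P_N+Q_N$, where $Q_N$ is the sum of the last two matrices and $P_N=A+Ne^{Nv}(\Delta v\,g-\varrho\nabla^2v)$. The structural point is that the two summands of $Q_N$ are each of the form $a\,g+b\,dv\otimes dv$, hence commute; diagonalizing in a $g$-orthonormal frame whose last vector is parallel to $\nabla v$ gives their eigenvalues on the nose:
\[
\lambda\big(g^{-1}Q_N\big)=N^2|\nabla v|^2e^{Nv}\Big[(1,\dots,1,1-\varrho)+e^{Nv}(\gamma,\dots,\gamma,\gamma+\varrho)\Big].
\]

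Next I would show this vector stays deep inside $\Gamma$. Condition \eqref{tau-alpha-sharp} forces $\varrho<\varrho_\Gamma$, so $(1,\dots,1,1-\varrho)$ lies in the open cone $\Gamma$, at some distance $\delta_0>0$ from $\partial\Gamma$ (as in the proof of Proposition~\ref{key-lemma1}); since $(\gamma,\dots,\gamma,\gamma+\varrho)\in\bar\Gamma$ by the hypothesis \eqref{key-condition1-construct} and $e^{Nv}>0$, the bracket lies in $\Gamma+\bar\Gamma\subseteq\Gamma$ and is still at distance $\geq\delta_0$ from $\partial\Gamma$. Multiplying by the positive factor $N^2|\nabla v|^2e^{Nv}\geq N^2m_0e^{Nv}$, with $m_0:=\min_{\bar M}|\nabla v|^2>0$, gives $\mathrm{dist}\big(\lambda(g^{-1}Q_N),\partial\Gamma\big)\geq N^2m_0\delta_0\,e^{Nv}$ pointwise on $\bar M$. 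On the other hand $\|g^{-1}P_N\|\leq C_A+C_1Ne^{Nv}$ pointwise, where $C_A:=\sup_{\bar M}\|g^{-1}A\|$ and $C_1:=\sup_{\bar M}\|g^{-1}(\Delta v\,g-\varrho\nabla^2v)\|$ are finite by compactness. By Weyl's inequality $\lambda(g^{-1}V[\underline u])$ lies within Euclidean distance $\sqrt n\,\|g^{-1}P_N\|$ of $\lambda(g^{-1}Q_N)$, and, using $v\geq1$, for $N$ large enough that $N^2m_0\delta_0-\sqrt n\,C_1N>0$,
\[
N^2m_0\delta_0\,e^{Nv}-\sqrt n\,\big(C_A+C_1Ne^{Nv}\big)\ \geq\ e^{N}\big(N^2m_0\delta_0-\sqrt n\,C_1N\big)-\sqrt n\,C_A\ \longrightarrow\ +\infty .
\]
Hence this left-hand side is positive on all of $\bar M$ once $N$ is large, so $\lambda(g^{-1}V[\underline u])\in\Gamma$ on $\bar M$ and $\underline g=e^{2e^{Nv}}g$ is the desired smooth conformal admissible metric.

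The step I expect to be the main obstacle is exactly the one the eigenvalue formula for $g^{-1}Q_N$ is designed to handle: because $v\geq1$, the genuinely leading term $N^2e^{2Nv}(\gamma|\nabla v|^2g+\varrho\,dv\otimes dv)$ has eigenvalues only in $\bar\Gamma$, possibly right on $\partial\Gamma$, so one must lean on the exponentially smaller---but interior---term $N^2e^{Nv}(|\nabla v|^2g-\varrho\,dv\otimes dv)$ to push the spectrum off the boundary. It is the commutativity of these two deformation matrices that makes their eigenvalues add without error, so that the \emph{absolute} distance to $\partial\Gamma$ still grows like $N^2m_0\delta_0e^{Nv}$; once that is in hand, the quadratic-in-$N$ gain comfortably dominates the uncontrolled curvature contribution $P_N$, which is precisely what replaces the pseudo-admissibility assumption of Proposition~\ref{thm3-construction}---at the cost of \eqref{key-condition1-construct}.
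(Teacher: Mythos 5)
Your construction is the same as the paper's: it takes $\underline{u}=e^{Nv}$ with $v$ the critical-point-free function from Lemma \ref{lemma-diff-topologuy} bounded below by a positive constant, and uses exactly the expansion \eqref{key3-2} and the eigenvalue identity \eqref{key3}, merely writing out in quantitative detail (distance to $\partial\Gamma$, Weyl's inequality) the step the paper compresses into ``for $N\gg1$''. The argument is correct.
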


\begin{proof}
According to Lemma \ref{lemma-diff-topologuy}, there exists a smooth   function $v$ with  $v\geq0$ and
\[ |\nabla v|^2\geq a_0>0  \mbox{ in } \bar M \]
for some positive constant $a_0$.  
Set $ \underline{u}=e^{Nv}$.  
By  \eqref{key3-2} and \eqref{key3}, $\lambda(g^{-1}V[\underline{u}])\in \Gamma$ in $\bar M$ for $N\gg1$. Thus $\underline{g}=e^{2\underline{u}}g$ is 
an admissible metric. 

\end{proof}

\subsubsection*{Confirm the condition \eqref{key-condition1-construct}}
Next we confirm \eqref{key-condition1-construct} in some cases.
By a simple computation
	\begin{equation}
	\label{gammarho0-1}
	\gamma =\frac{(n-2)(\tau-2)}{2(\tau-1)},\,\  \gamma+\varrho
	=\frac{\tau(n-2)}{2(\tau-1)}. \nonumber
\end{equation}
Thus under the assumption 
\begin{equation}
	\label{tau-alpha-3}
	\begin{cases}
		\tau\leq  0  \,& \mbox{ if } \alpha=-1,\\
		\tau\geq  2  \,& \mbox{ if } \alpha=1,
	\end{cases}
\end{equation} 
we obtain $\gamma\geq0$ and $\gamma+\varrho\geq0$. 

On the other hand,
for $0<\tau<1$ and $\tau\leq 2-\frac{2}{\varrho_\Gamma}$, one can check that
\begin{equation}
	\begin{aligned}
		(\gamma,\cdots,\gamma,\gamma+\varrho)=\gamma (1,\cdots,1,1+\frac{\varrho}{\gamma})\in\bar\Gamma.
		\nonumber
	\end{aligned}
\end{equation}

\begin{remark}
	In \cite{yuan-PUE1} we have constructed   admissible metrics under the assumption  \eqref{tau-alpha-3}. Obviously,
	the condition \eqref{tau-alpha-4} is broader than \eqref{tau-alpha-3},
	which hence extends some of results in \cite{yuan-PUE1}.
\end{remark}


 \medskip
\section{Geometric optimal condition}
\label{sec1-topo}

For the prescribed curvature equation \eqref{main-equ1}, as we discussed in Subsection \ref{sec1-AOC} 
\begin{itemize}
	\item The equation  \eqref{main-equ1} has the form of \eqref{equ0-0}.
	
\item The equation	\eqref{main-equ1}   can be further reduced to the equation 
	of the form 
	\begin{equation}
		\label{equ1-Schouten}
		\begin{aligned}
			f(\lambda(-\tilde{g}^{-1} A_{\tilde{g}})) =\psi.
		\end{aligned}
	\end{equation}
 \begin{remark}
	Notice that in \eqref{equ1-Schouten}, $f$ is different from that of \eqref{main-equ1}, with  the same notation.
\end{remark}

 \item Even for $f=\sigma_1$,  in general one could not expect the solvability of 
 $${f}(\lambda(\tilde{g}^{-1} A_{\tilde{g}})) =\psi$$ 
 in the conformal class of complete admissible metrics, due to the obstruction to the existence of complete metrics with positive scalar curvature.

\end{itemize}  
In addition,  we prove that
\begin{itemize}
	
		 \item {\em Analytic optimal condition:} \eqref{assumption-4} is the optimal condition, under which the equation \eqref{equ0-0} is of uniform ellipticity according to Proposition \ref{key-lemma1} and Lemma \ref{lemma5.11}. 
	Consequently,   \eqref{tau-alpha-sharp} is  the sharp condition so that the equation \eqref{main-equ1} can be reduced to \eqref{equ1-Schouten} being of uniform ellipticity.

	\item {\em Uniform ellipticity $\Rightarrow$ Solvability}: The results presented in Subsection \ref{subsec1-geometric-conclusion} 
	 reveal that the uniform ellipticity implies the solvability of  \eqref{equ1-Schouten} (and thus of  \eqref{main-equ1}) in the conformal class of smooth  complete admissible metrics.

\end{itemize}

A natural question to raise  is as follows.
\begin{problem}
	Can we drop the uniform ellipticity assumption 
	in Theorem \ref{existence1-compact-2}?
\end{problem}

This section is devoted to answering this problem via presenting  obstructions.
More precisely, we show that the uniform ellipticity assumption \eqref{fully-uniform2} imposed in  
Theorem \ref{existence1-compact-2}  cannot be further dropped in general.
Also this shows that  assumption \eqref{tau-alpha-sharp} on $\tau$ and $\alpha$ is sharp.

	
	In prequel \cite{yuan-PUE1} the author solved the following equation when 
	$\Gamma\neq\Gamma_n$
	\begin{equation}
		\label{conformal-equ0}
		\begin{aligned}
			f(\lambda(\tilde{g}^{-1}G_{\tilde{g}}))=\psi.
	\end{aligned}	\end{equation}  
	Unlike the case $\Gamma\neq\Gamma_n$,  at least in dimension three, the equation
	\eqref{conformal-equ0} with $\Gamma=\Gamma_n$ is in general unsolvable in the conformal class of smooth complete metrics, 
	as shown by  a topological obstruction. 
	Nevertheless,  this topological obstruction relies crucially on \eqref{sectional-einstein}, the relation between sectional curvature and Einstein tensor which holds only in dimension three. 
	
	Below we remove the dimension restriction  
	via some obstruction from function theory. 
In addition we extend the topological obstruction 
 to 
 general equation \eqref{equ1-Schouten} with type 1 cone.
To this end, in this paper we prove  the following 
key ingredients.

\begin{lemma}\label{lemma1}	For any $\lambda\in\Gamma$, we have	\begin{equation}		\begin{aligned}		\lambda_{i_1}+\cdots +\lambda_{i_{\kappa_\Gamma+1}}>0, \mbox{   } \forall 1\leq i_1<\cdots<i_{\kappa_\Gamma+1}\leq n. \nonumber		\end{aligned}	\end{equation} \end{lemma}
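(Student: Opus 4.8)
The plan is to read off the claim directly from the definition of $\kappa_\Gamma$ in \eqref{kappa1-Gamma} together with the cone structure of $\Gamma$. By symmetry of $\Gamma$ it suffices to prove $\lambda_1+\cdots+\lambda_{\kappa_\Gamma+1}>0$ after relabelling so that $\lambda_1\le\cdots\le\lambda_n$; equivalently, we must rule out $\lambda_1+\cdots+\lambda_{\kappa_\Gamma+1}\le 0$ for the $\kappa_\Gamma+1$ smallest entries. First I would recall that by the maximality in \eqref{kappa1-Gamma} the point $e:=(\overbrace{0,\dots,0}^{\kappa_\Gamma+1},\overbrace{1,\dots,1}^{n-\kappa_\Gamma-1})$ is \emph{not} in $\Gamma$ (otherwise $\kappa_\Gamma$ would be at least $\kappa_\Gamma+1$), and since $\Gamma$ is an open convex cone with $\Gamma_n\subseteq\Gamma$, one checks $e\in\partial\Gamma$: indeed $e$ is a limit of points $(\epsilon,\dots,\epsilon,1,\dots,1)\in\Gamma_n\subseteq\Gamma$, so $e\in\bar\Gamma$, and $e\notin\Gamma$.

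Next I would use the standard separating-hyperplane / supporting-functional fact for symmetric convex cones: at a boundary point there is an outward normal, and by symmetrizing it (averaging over permutations fixing $e$, i.e. permutations of the first $\kappa_\Gamma+1$ coordinates among themselves and of the last $n-\kappa_\Gamma-1$ among themselves) one obtains a supporting linear functional at $e$ of the form $L(\lambda)=a\sum_{i\le \kappa_\Gamma+1}\lambda_i+b\sum_{i>\kappa_\Gamma+1}\lambda_i$ with $L\le 0$ on $\bar\Gamma$ and $L(e)=0$, hence $b\cdot(n-\kappa_\Gamma-1)=0$. Since $\Gamma_n\subseteq\Gamma$, testing $L$ on coordinate vectors forces $a\le 0$ and $b\le 0$; and if $a=0$ then $L\equiv 0$ on the span of the last coordinates but $L$ is nontrivial, while $b=0$ would make $L(\lambda)=a\sum_{i\le\kappa_\Gamma+1}\lambda_i$ — this is exactly the functional we want. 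So the real content is to show $L$ can be taken with $a<0,\ b=0$, i.e. that the only (symmetrized) supporting functional at $e$ is a negative multiple of $\lambda\mapsto\sum_{i\le\kappa_\Gamma+1}\lambda_i$. This follows because $e$ lies in the relative interior of the face $\{\lambda_1=\cdots=\lambda_{\kappa_\Gamma+1}=0\}\cap\bar\Gamma$ of $\bar\Gamma$ (any direction moving some of the last coordinates stays in $\bar\Gamma$ for both signs, using $\Gamma_n\subseteq\Gamma$ and openness), so any supporting functional must annihilate that whole subspace, forcing $b=0$ and $a<0$. With $L(\lambda)=a\sum_{i\le\kappa_\Gamma+1}\lambda_i\le 0$ on $\bar\Gamma$ and $a<0$, we get $\sum_{i\le\kappa_\Gamma+1}\lambda_i\ge 0$ on $\bar\Gamma$, and strict inequality on the open cone $\Gamma$ since $L$ does not vanish identically on $\Gamma$.

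Alternatively — and this is probably the cleaner write-up — I would argue by contradiction directly: suppose $\lambda\in\Gamma$ with $\lambda_1+\cdots+\lambda_{\kappa_\Gamma+1}\le 0$ where $\lambda_1\le\cdots\le\lambda_n$. Consider the convex combination / scaling that pushes $\lambda$ toward the point $e$ above. Concretely, one can add to $\lambda$ a large multiple of $(\overbrace{0,\dots,0}^{\kappa_\Gamma+1},\overbrace{1,\dots,1}^{n-\kappa_\Gamma-1})\in\bar\Gamma$ — wait, that point is in $\bar\Gamma$ but we need it or something in $\Gamma$; instead add a large multiple of $(\overbrace{1,\dots,1}^{\kappa_\Gamma+1},\overbrace{t,\dots,t}^{n-\kappa_\Gamma-1})$-type vectors and rescale, using convexity of $\Gamma$, to reach a point whose first $\kappa_\Gamma+1$ coordinates are $\le 0$ and equal and whose last coordinates are positive; normalizing and taking a limit lands in $\bar\Gamma$ at a point of the form $(\overbrace{-s,\dots,-s}^{\kappa_\Gamma+1},\overbrace{1,\dots,1}^{n-\kappa_\Gamma-1})$ with $s\ge 0$, and then letting $s\downarrow 0$ by convexity with an interior point of $\Gamma_n$ gives $e\in\Gamma$, contradicting maximality of $\kappa_\Gamma$. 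The main obstacle — and the step to handle carefully — is exactly this limiting/convexity manipulation: making precise that the inequality $\sum_{i\le\kappa_\Gamma+1}\lambda_i\le 0$ together with convexity and $\Gamma_n\subseteq\Gamma$ forces $e$ into the open cone $\Gamma$, rather than merely into $\bar\Gamma$. Everything else is bookkeeping about the ordering of coordinates and the definition of $\kappa_\Gamma$.
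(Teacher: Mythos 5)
The paper's own ``proof'' of this lemma is a one-line remark that the statement follows from the convexity, symmetry and openness of $\Gamma$, so your proposal is certainly in the intended spirit; the problem is that neither of your two sketches is complete, and in both the missing step is precisely the one you yourself flag as ``the real content''. In the supporting-hyperplane version, the claim that $e=(0,\dots,0,1,\dots,1)$ (with $m=\kappa_\Gamma+1$ zeros) lies in the relative interior of the face $\{\lambda_1=\cdots=\lambda_m=0\}\cap\bar\Gamma$ is asserted rather than proved: it is not clear that $e-te_j\in\bar\Gamma$ for $j>m$ and small $t>0$, since $e$ is itself a boundary point and the perturbation $-e_j$ is not small, so ``openness'' does not apply. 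Fortunately you do not need this claim. Any supporting functional $L$ at $e$ with $L\le 0$ on $\bar\Gamma$ satisfies $L(e_j)\le 0$ for every $j$ (because $e_j\in\bar\Gamma_n\subseteq\bar\Gamma$), and $0=L(e)=\sum_{j>m}L(e_j)$ is a sum of nonpositive terms; hence $L(e_j)=0$ for all $j>m$, so $L(\lambda)=\sum_{i\le m}a_i\lambda_i$ with all $a_i\le 0$ and, since $L\not\equiv 0$, some $a_{i_0}<0$. Averaging over permutations of $\{1,\dots,m\}$ (each permuted functional is still $\le 0$ on $\bar\Gamma$ by symmetry of $\Gamma$) gives $\frac{1}{m}\bigl(\sum_{i\le m}a_i\bigr)\sum_{i\le m}\lambda_i\le 0$ with $\sum_{i\le m}a_i<0$, whence $\sum_{i\le m}\lambda_i\ge 0$ on $\bar\Gamma$, and the inequality is strict on the open set $\Gamma$.

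Your second sketch is the more standard route but is left explicitly unfinished at the limiting step. The clean completion avoids limits altogether: suppose $\lambda\in\Gamma$ with $\lambda_{i_1}+\cdots+\lambda_{i_m}\le 0$. By symmetry every permutation of $\lambda$ lies in $\Gamma$, so by convexity the average over all permutations of the coordinates $i_1,\dots,i_m$ (fixing the others) lies in $\Gamma$; after relabelling this is $\mu=(\bar\lambda,\dots,\bar\lambda,\lambda_{m+1},\dots,\lambda_n)$ with $\bar\lambda=\frac{1}{m}\sum_{k}\lambda_{i_k}\le 0$. Now use that $\Gamma+\bar\Gamma\subseteq\Gamma$ for an open convex cone: adding the vector $(-\bar\lambda,\dots,-\bar\lambda,\,R-\lambda_{m+1},\dots,R-\lambda_n)\in\bar\Gamma_n\subseteq\bar\Gamma$ for $R$ large yields $(0,\dots,0,R,\dots,R)\in\Gamma$, and dividing by $R$ puts $(0,\dots,0,1,\dots,1)$ with $m$ zeros in the \emph{open} cone $\Gamma$ (not merely in $\bar\Gamma$, because one summand already lies in $\Gamma$), contradicting the maximality in the definition \eqref{kappa1-Gamma} of $\kappa_\Gamma$. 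Either repair turns your outline into a complete proof.
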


\begin{proof}  
 The proof is based on the  convexity, symmetry and openness of $\Gamma$. 
\end{proof}

Specifically, we obtain
\begin{lemma}
	\label{lemma1-type1-yuan}
	Suppose $\Gamma$ is of type 1. Then for any
	$\lambda\in\Gamma$ there holds $$\sum_{j\neq i}\lambda_j>0, \,\, \forall 1\leq i\leq n. $$
	
\end{lemma}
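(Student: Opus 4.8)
\textbf{Proof plan for Lemma \ref{lemma1-type1-yuan}.}
The plan is to deduce this statement directly from Lemma \ref{lemma1} by identifying the value of $\kappa_\Gamma$ when $\Gamma$ is a type 1 cone. Recall that, by definition, $\Gamma$ is of type 1 precisely when $(0,\cdots,0,1)\in\partial\Gamma$. On the other hand, $\kappa_\Gamma$ as defined in \eqref{kappa1-Gamma} is the largest integer $k$ such that the vector with $k$ leading zeros and $n-k$ trailing ones lies in $\Gamma$. I claim that $\Gamma$ is of type 1 if and only if $\kappa_\Gamma=n-1$.

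First I would check this equivalence. If $\kappa_\Gamma=n-1$, then by definition $(0,\cdots,0,1)\in\Gamma$ is false and, more to the point, the maximality of $\kappa_\Gamma$ together with $\Gamma_n\subseteq\Gamma$ forces $(0,\cdots,0,1)$ to lie on $\partial\Gamma$: indeed $(0,\cdots,0,1)$ is a limit of points $(\epsilon,\cdots,\epsilon,1)\in\Gamma_n\subseteq\Gamma$, so it is in $\bar\Gamma$, while it is not in the open set $\Gamma$ since otherwise $\kappa_\Gamma$ would be $n$, i.e. $\Gamma=\Gamma_1$, contradicting $\kappa_\Gamma=n-1$ (here one uses that $\kappa_{\Gamma_1}=n$). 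Conversely, if $\Gamma$ is of type 1, then $(0,\cdots,0,1)\notin\Gamma$, so $\kappa_\Gamma\leq n-1$; and since $\Gamma_n\subseteq\Gamma$ one always has $\kappa_\Gamma\geq 0$, but in fact $\kappa_\Gamma\geq n-1$ must hold because... here I would argue that $(0,\cdots,0,1,1)\in\Gamma$: this follows from the symmetry and convexity of $\Gamma$ applied to the $n$ cyclic permutations of $(0,\cdots,0,1)\in\bar\Gamma$, whose average is $\tfrac1n(1,\cdots,1)\in\Gamma_n\subseteq\Gamma$, combined with an openness argument showing one can push $(0,\cdots,0,1)+(0,\cdots,0,1,0)$ type combinations back into $\Gamma$. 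Thus $\kappa_\Gamma=n-1$.

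With $\kappa_\Gamma=n-1$ in hand, Lemma \ref{lemma1} applied with $\kappa_\Gamma+1=n$ would seem to give only the single inequality $\lambda_1+\cdots+\lambda_n>0$, which is weaker than what we want. So instead I would apply Lemma \ref{lemma1} not with the full $\kappa_\Gamma$ but observe that the conclusion we need, $\sum_{j\neq i}\lambda_j>0$ for every $i$, is a statement about sums of $n-1$ of the coordinates. The right move is: for type 1 cones, $\kappa_\Gamma=n-1$ means the $(n-1)$-element subsets are exactly the subsets of size $\kappa_\Gamma+1-1$... Let me reconsider. The cleanest route is to use the maximality in \eqref{kappa1-Gamma} together with Lemma \ref{lemma1}: since $\kappa_\Gamma = n-1$, there are no $(\kappa_\Gamma+1)=n$-element positivity constraints beyond the trace. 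The actual desired inequality should follow from the fact that $(0,\cdots,0,1,1)\in\bar\Gamma$ (shown above) together with symmetry: for any $\lambda\in\Gamma$ and any index $i$, convexity and symmetry of $\Gamma$ let us test against the supporting functional dual to the boundary point, and the hyperplane through $(0,\cdots,0,1,1)\in\partial\bar\Gamma$ has all nonnegative coefficients with exactly one zero, giving $\sum_{j\neq i}\lambda_j\geq 0$; strictness then comes from $\lambda$ lying in the open cone $\Gamma$.

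\textbf{Main obstacle.} The delicate point is the passage from $(0,\cdots,0,1)\in\partial\Gamma$ to the sharp positivity of all $(n-1)$-fold partial sums, i.e. pinning down that the only zero-boundary behavior allowed is along the single-coordinate directions. This amounts to showing that the supporting hyperplane of $\Gamma$ at $(0,\cdots,0,1)$, after symmetrization, is (a positive multiple of) $\{\sum_{j\neq n}\lambda_j = 0\}$ — or at least that $\Gamma$ is contained in the half-space $\{\sum_{j\neq n}\lambda_j>0\}$. I expect this to follow by combining: (i) symmetry, so the supporting functional can be taken symmetric in the first $n-1$ coordinates; (ii) $\Gamma_n\subseteq\Gamma$, which forces the functional's coefficients to be nonnegative; (iii) the functional vanishes at $(0,\cdots,0,1)$, killing the last coefficient; and (iv) it cannot vanish identically. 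Normalizing then yields exactly $\sum_{j\neq n}\lambda_j$ as the (essentially unique) such functional, and running over all permutations gives the lemma. The one subtlety to handle carefully is that $\partial\Gamma$ may be non-smooth at $(0,\cdots,0,1)$, so one should phrase this via separating hyperplanes rather than a gradient computation.
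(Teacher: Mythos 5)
Your central claim that $\Gamma$ is of type 1 if and only if $\kappa_\Gamma=n-1$ is exactly backwards. By \eqref{kappa1-Gamma}, $\kappa_\Gamma=n-1$ means the vector $(0,\cdots,0,1)$ lies in the \emph{open} cone $\Gamma$, hence not on $\partial\Gamma$; that is the type 2 case. Type 1 corresponds to $\kappa_\Gamma\le n-2$ (the paper states this explicitly in Lemma \ref{lemma1-preserving-kappa} and in Section \ref{sec1-topo}). Your attempted verification also contains a concretely false step: you assert $(0,\cdots,0,1,1)\in\Gamma$ for every type 1 cone, but $\Gamma_n$ is of type 1 with $\kappa_{\Gamma_n}=0$, and $(0,\cdots,0,1,1)\notin\Gamma_n$. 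Because of this reversal you concluded that Lemma \ref{lemma1} only yields positivity of the full trace and abandoned it, when in fact it gives the statement at once: for a type 1 cone $\kappa_\Gamma+1\le n-1$, Lemma \ref{lemma1} says every sum of $\kappa_\Gamma+1$ of the $\lambda_j$ is positive, and any sum of $n-1$ entries is a positive multiple of the sum of the $(\kappa_\Gamma+1)$-element sub-sums it contains, hence positive. This specialization is precisely how the paper obtains the lemma (``Specifically, we obtain...'').

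That said, the separating-hyperplane argument sketched in your final paragraph is sound and would give an independent, self-contained proof: choose a nonzero linear functional $\ell$ with $\ell\ge 0$ on $\bar\Gamma$ and $\ell(0,\cdots,0,1)=0$ (it is then strictly positive on the open cone), average it over permutations of the first $n-1$ coordinates using the symmetry of $\Gamma$; the symmetrized functional has the form $a\sum_{j<n}\lambda_j+b\lambda_n$, with $b=0$ from evaluation at $(0,\cdots,0,1)$ and $a>0$ from evaluation at $(1,\cdots,1)\in\Gamma_n\subseteq\Gamma$, so $\Gamma\subseteq\{\sum_{j<n}\lambda_j>0\}$ and symmetry over the omitted index finishes. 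If you write up this version, remove the discussion of $\kappa_\Gamma$ and of the point $(0,\cdots,0,1,1)$ entirely: neither is needed, and both are incorrect as stated.
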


As a consequence, we derive
\begin{proposition}
	\label{corollary-2}
Suppose $\Gamma$ is of type 1. 
	If $\lambda(-g^{-1}A_g)\in \Gamma$ then $G_g>0$.  
\end{proposition}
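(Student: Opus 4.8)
\textbf{Proof plan for Proposition \ref{corollary-2}.}

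The plan is to translate the tensor inequality $G_g>0$ into a statement about eigenvalues and then reduce it to the cone-theoretic fact supplied by Lemma \ref{lemma1-type1-yuan}. First I would recall the purely algebraic relation between the Schouten tensor $A_g$ and the Einstein tensor $G_g$. Writing things out, $A_g=\frac{1}{n-2}(Ric_g-\frac{1}{2(n-1)}R_g\cdot g)$ while $G_g=Ric_g-\frac{1}{2}R_g\cdot g$, so a direct computation gives $G_g$ as a linear combination of $A_g$ and $(\tr_g A_g)\,g$. Concretely, taking the $g$-trace of the Schouten tensor yields $\tr_g A_g=\frac{1}{2(n-1)}R_g$, hence $R_g=2(n-1)\tr_g A_g$ and $Ric_g=(n-2)A_g+\frac{1}{2(n-1)}R_g\cdot g=(n-2)A_g+(\tr_g A_g)g$; substituting into the definition of $G_g$ gives
\begin{equation}
	G_g=(n-2)A_g+(\tr_g A_g)g-(n-1)(\tr_g A_g)g=(n-2)A_g-(n-2)(\tr_g A_g)g. \nonumber
\end{equation}
Thus $G_g=(n-2)\bigl(A_g-(\tr_g A_g)g\bigr)$, a relation I would record as the first step.

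Next I would diagonalize. Working at an arbitrary point $x\in M$ in a $g$-orthonormal frame that diagonalizes $A_g$, let $\mu=(\mu_1,\dots,\mu_n)$ be the eigenvalues of $g^{-1}A_g$, so that $-\mu=\lambda(-g^{-1}A_g)\in\Gamma$ by hypothesis. In this frame the eigenvalues of $g^{-1}G_g$ are, by the displayed identity, $(n-2)(\mu_i-\sum_{j}\mu_j)=-(n-2)\sum_{j\neq i}\mu_j$ for $i=1,\dots,n$. Therefore $G_g>0$ at $x$ is equivalent to $\sum_{j\neq i}(-\mu_j)>0$ for every $i$, i.e. to the statement that the vector $-\mu\in\Gamma$ has all of its ``$(n-1)$-fold partial sums'' positive.

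Finally I would invoke Lemma \ref{lemma1-type1-yuan}: since $\Gamma$ is of type $1$ and $-\mu\in\Gamma$, that lemma gives precisely $\sum_{j\neq i}(-\mu_j)>0$ for all $1\leq i\leq n$, which is what we just saw is equivalent to positivity of $g^{-1}G_g$, hence of $G_g$, at $x$. Since $x$ was arbitrary, $G_g>0$ on all of $M$. I do not anticipate a genuine obstacle here: the only points requiring care are getting the coefficients in the algebraic identity $G_g=(n-2)(A_g-(\tr_g A_g)g)$ right and noting that conjugating by the same $g^{-1}$ lets one read off eigenvalues of $G_g$ from those of $A_g$; the substantive content is entirely contained in Lemma \ref{lemma1-type1-yuan} (equivalently the $\kappa_\Gamma=1$ case of Lemma \ref{lemma1}), which we are free to assume.
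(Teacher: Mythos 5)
Your proof is correct and follows essentially the route the paper intends: the algebraic identity $G_g=(n-2)\bigl(A_g-(\mathrm{tr}_g A_g)\,g\bigr)$ reduces $G_g>0$ to positivity of the $(n-1)$-fold partial sums of $\lambda(-g^{-1}A_g)\in\Gamma$, which is exactly the content of Lemma \ref{lemma1-type1-yuan}. (One cosmetic slip: the relevant instance of Lemma \ref{lemma1} is not the ``$\kappa_\Gamma=1$ case'' but rather $\kappa_\Gamma\leq n-2$ together with the standard averaging step passing from $(\kappa_\Gamma+1)$-fold to $(n-1)$-fold sums, which Lemma \ref{lemma1-type1-yuan} already packages for you.)
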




\subsubsection*{\bf Obstructions}
With this at hand,
we may give some obstructions from topology and function theory to show that, whenever the corresponding cone $\Gamma$ is of type 1  (equivalently $f$ obeys \eqref{fully-uniform2} in $\Gamma$ by Lemma \ref{lemma5.11}),   the equation
\eqref{equ1-Schouten} 
is in general  unsolvable in the conformal class of smooth complete  admissible metrics.

	Denote $B_{r}(a)=\left\{x\in \mathbb{R}^n: |x-a|^2<r^2\right\}.$ Suppose 
$\bar B_{r_1}(a_1), \cdots, \bar B_{r_m}(a_m)$ are pairwise disjoint.
Pick $r\gg 1$ so that $\cup_{i=1}^m B_{r_i}(a_i)\subset B_r(0)$. Then  we denote
$$\Omega=B_{r+1}(0)\setminus (\cup_{i=1}^m \bar B_{r_i}(a_i)).$$ 
On $\Omega$, one 	 may pick $g$ the Euclidean  metric, which is conformal to $\frac{\delta_{ij}dx^i\otimes dx^j}{((r+2)^2-|x|^2)^2}$.


	\begin{enumerate}
		\item {\em Topological obstruction}: Assume $n=3$.
		If Theorem  \ref{existence1-compact-2} holds 
		for 	$${f}(\lambda(-\tilde{g}^{-1}A_{\tilde{g}}))= \psi$$ 
		with  some  type 1 cone $\Gamma$,	
		according to Propositions \ref{prop1-einstein-sectional} and \ref{corollary-2},
		the solution  
		on $\Omega$  is then
		a complete 
		metric  
		with negative sectional curvature. 
		This contradicts to the  Cartan-Hadamard theorem.  
		
		\item {\em Obstruction  from  function theory}: Suppose that the following problem 
		\begin{equation}  f(\lambda(D^2 u))= \psi e^{u}   		\mbox{ in }\Omega, \,\, u=+\infty \mbox{ on } \partial \Omega \nonumber 	\end{equation} 
 with some type 1 cone $\Gamma$ (i.e. $0\leq\kappa_\Gamma\leq n-2$),
  has an admissible solution $u\in C^\infty(\Omega)$. According to Lemma \ref{lemma1},
		$u$ is a strictly  $(\kappa_\Gamma+1)$-plurisubharmonic function. 
		This is a contradiction by   some results of Harvey-Lawson \cite{Harvey2012Lawson-Adv,Harvey2013Lawson-IUMJ}. 
	\end{enumerate}
	

 The above obstructions
 reveal that the uniform ellipticity assumption \eqref{fully-uniform2} imposed in  
 Theorem \ref{existence1-compact-2}   cannot be further dropped.



 \medskip

\section{Further remarks on open symmetric convex cones}
\label{sec1-cones}


For purpose of verifying  \eqref{tau-alpha-4} or  \eqref{key-condition1-construct}, which is imposed as a proper condition
to construct admissible metrics without pseudo-admissible metric assumption, it seems necessary to estimate $\varrho_\Gamma$ and $\varrho_{\tilde{\Gamma}}$. 

For this reason, the  cones with $\varrho_{\tilde{\Gamma}}\geq 2$ are of particular interest.
Below we will prove some related results.
First we prove a key ingredient by projection. 
Let $\Gamma_\infty$ 
be as in \eqref{construct1-Gamma-infty} the projection of $\Gamma$ to the subspace of former $n-1$ subscripts. 
As noted by 
 \cite{CNS3}, when $\Gamma$ is of type 1, $\Gamma_\infty$ is an open symmetric convex cone in $\mathbb{R}^{n-1}$ and with $\Gamma_\infty\neq\mathbb{R}^{n-1}$.
From the construction of $\Gamma_\infty$, we can verify that 
\begin{lemma}
	\label{lemma1-preserving-kappa}
	If $\Gamma$ is of type 1 (if and only if $\kappa_\Gamma\leq n-2$), then 
	\begin{equation}
		\label{statement1}
		\begin{aligned}
			\kappa_{\Gamma_\infty}=\kappa_\Gamma,  \,\, \varrho_\Gamma\leq \varrho_{\Gamma_\infty}.  \nonumber
		\end{aligned}
	\end{equation}
	
\end{lemma}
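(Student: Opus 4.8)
The plan is to unpack both claims directly from the definitions of $\kappa_\Gamma$ in \eqref{kappa1-Gamma} and $\varrho_\Gamma$ in \eqref{varrho1-Gamma}, using only that $\Gamma_\infty$ is the coordinate projection \eqref{construct1-Gamma-infty} and that $\Gamma$ is type 1 (so $\kappa_\Gamma\le n-2$ and, as recalled from \cite{CNS3}, $\Gamma_\infty$ is itself an open symmetric convex cone, proper in $\mathbb{R}^{n-1}$). Throughout I will exploit the elementary fact that if $(\lambda_1,\dots,\lambda_{n-1},\lambda_n)\in\Gamma$ then $(\lambda_1,\dots,\lambda_{n-1})\in\Gamma_\infty$, and conversely any point of $\Gamma_\infty$ extends to a point of $\Gamma$ by adjoining a suitable last coordinate.

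First I would prove $\kappa_{\Gamma_\infty}=\kappa_\Gamma$. For ``$\ge$'': writing $k=\kappa_\Gamma$, the vector $(\overbrace{0,\dots,0}^{k},\overbrace{1,\dots,1}^{n-k})$ lies in $\Gamma$; projecting off the last coordinate gives $(\overbrace{0,\dots,0}^{k},\overbrace{1,\dots,1}^{n-1-k})\in\Gamma_\infty$ (here $n-1-k\ge 1$ since $k\le n-2$), so $\kappa_{\Gamma_\infty}\ge k$. For ``$\le$'': if $(\overbrace{0,\dots,0}^{k+1},\overbrace{1,\dots,1}^{n-2-k})\in\Gamma_\infty$ then by definition of the projection there is some $t\in\mathbb{R}$ with $(\overbrace{0,\dots,0}^{k+1},\overbrace{1,\dots,1}^{n-2-k},t)\in\Gamma$; by symmetry of $\Gamma$ I may instead place $t$ among the first coordinates, and since $\Gamma\supseteq\Gamma_n$ and $\Gamma$ is a convex cone, averaging this vector with a point of $\Gamma_n$ and using openness shows I can push $t$ up to $1$, i.e.\ $(\overbrace{0,\dots,0}^{k+1},\overbrace{1,\dots,1}^{n-1-k})\in\Gamma$, contradicting $\kappa_\Gamma=k$ unless the count of ones is handled carefully — more cleanly, I will argue that $(\overbrace{0,\dots,0}^{k+1},\overbrace{1,\dots,1}^{n-2-k},t)\in\Gamma$ for some $t$ forces, together with $e_n\in\bar\Gamma$ or the cone structure, the vector with a $1$ in the last slot to lie in $\Gamma$, giving $\kappa_\Gamma\ge k+1$. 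This is the step needing the most care, so I will isolate it as the main obstacle (see below).

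Next, for $\varrho_\Gamma\le\varrho_{\Gamma_\infty}$: by \eqref{varrho1-Gamma}, $(1,\dots,1,1-\varrho_\Gamma)\in\partial\Gamma\subset\bar\Gamma$, where here there are $n-1$ entries equal to $1$. Projecting off the last coordinate, $(1,\dots,1)\in\bar{\Gamma_\infty}$ with $n-2$ entries — that is not yet what I want. Instead I use symmetry: $(1-\varrho_\Gamma,1,\dots,1)\in\bar\Gamma$, and projecting off the \emph{last} coordinate gives $(1-\varrho_\Gamma,1,\dots,1)\in\bar{\Gamma_\infty}$, an $(n-1)$-vector with $n-2$ ones and one entry $1-\varrho_\Gamma$; by symmetry of $\Gamma_\infty$ this is $(1,\dots,1,1-\varrho_\Gamma)\in\bar{\Gamma_\infty}$. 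Since $\varrho_{\Gamma_\infty}$ is characterized by $(1,\dots,1,1-\varrho_{\Gamma_\infty})\in\partial\Gamma_\infty$ and, for $s\ge 0$, $(1,\dots,1,1-s)\in\Gamma_\infty \iff s<\varrho_{\Gamma_\infty}$ (monotonicity along this ray, from convexity and $\Gamma_{n-1}\subset\Gamma_\infty$), membership of $(1,\dots,1,1-\varrho_\Gamma)$ in the \emph{closure} forces $\varrho_\Gamma\le\varrho_{\Gamma_\infty}$.

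The main obstacle is the ``$\le$'' direction of $\kappa_{\Gamma_\infty}=\kappa_\Gamma$: one must rule out that the projection strictly enlarges the test vectors with prescribed zeros. The resolution is that adjoining a coordinate back is controlled — any $(\lambda',t)\in\Gamma$ with $\lambda'$ having $k+1$ zeros can be modified, using that $\Gamma$ is an open convex cone containing $\Gamma_n$ and that (for a type 1 cone) membership is monotone in each coordinate direction along rays of the form $\lambda + s\,e_i$, to produce a vector of the form $(\overbrace{0,\dots,0}^{k+1},\overbrace{1,\dots,1}^{n-1-k})\in\Gamma$, which contradicts the maximality defining $\kappa_\Gamma$. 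I expect this to go through cleanly once the monotonicity lemma (Theorem \ref{yuan-k+1}, part $(\bf 1)$, giving $f_i\ge 0$, equivalently $\lambda+se_i\in\Gamma$ for $s\ge 0$ when $\lambda\in\Gamma$ — or directly from $\Gamma_n\subset\Gamma$ and convexity) is invoked, and I will state it as a one-line observation before the two displayed containments.
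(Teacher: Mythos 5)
The paper gives no proof of this lemma at all (it is left as a verification ``from the construction of $\Gamma_\infty$''), and your direct unpacking of the definitions \eqref{kappa1-Gamma}, \eqref{varrho1-Gamma}, \eqref{construct1-Gamma-infty} is exactly the intended argument; both halves are correct in substance. One operation as you literally describe it would fail, though: in the ``$\le$'' direction of $\kappa_{\Gamma_\infty}=\kappa_\Gamma$, averaging $({\overbrace{0,\dots,0}^{k+1}},{\overbrace{1,\dots,1}^{n-2-k}},t)$ with a point of $\Gamma_n$ destroys the $k+1$ zero entries, so the result is no longer a test vector for $\kappa_\Gamma$; likewise coordinatewise monotonicity only lets you \emph{increase} $t$, which is useless when $t>1$. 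The clean mechanism, using only the tools you list, is: by $\Gamma+\bar\Gamma_n\subseteq\Gamma$ assume $t\ge 1$; rescale by $1/t$ (cone property) to get $({\overbrace{0,\dots,0}^{k+1}},{\overbrace{1/t,\dots,1/t}^{n-2-k}},1)\in\Gamma$; then add $({\overbrace{0,\dots,0}^{k+2}},{\overbrace{1-1/t,\dots,1-1/t}^{n-2-k}})\in\bar\Gamma_n$ to obtain $({\overbrace{0,\dots,0}^{k+1}},{\overbrace{1,\dots,1}^{n-1-k}})\in\Gamma$, contradicting the maximality defining $\kappa_\Gamma=k$. The second half is fine as written: the projection of $\bar\Gamma$ lies in $\overline{\Gamma_\infty}$ by continuity, and your monotonicity claim along the ray $s\mapsto(1,\dots,1,1-s)$ — plus the standard convexity fact that a segment from a closure point to an interior point lies in the open cone, which rules out closure points with $s>\varrho_{\Gamma_\infty}$ — yields $\varrho_\Gamma\le\varrho_{\Gamma_\infty}$. (The parenthetical ``$n-2$ entries'' after your first projection should read $n-1$, but this does not affect the argument.)
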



This is a key ingredient for estimating $\varrho_\Gamma$ from above.
Inspired by this lemma, we will construct certain cones iteratively via projection. 
To do this we assume $\kappa_\Gamma\leq n-2$ 
and then obtain $\Gamma_\infty$. 
When $\kappa_\Gamma =\kappa_{\Gamma_\infty} \leq n-3$,  
i.e., $\Gamma_\infty$ is also of type 1, 
similarly we further construct a cone, denoted by $$\Gamma^\infty_{\mathbb{R}^{n-2}}\subset \mathbb{R}^{n-2},$$  which is the projection of $\Gamma_\infty$ to the subspace of former $n-2$ subscripts.
Accordingly, we can construct the cones by projection as follows:
\begin{equation}
	\label{construction-cones-infty}
	\begin{aligned}
		\Gamma^\infty_{\mathbb{R}^{n-3}}\subset \mathbb{R}^{n-3}, \cdots, \Gamma^\infty_{\mathbb{R}^{\kappa_\Gamma+1}} \subset \mathbb{R}^{\kappa_\Gamma+1}.
	\end{aligned}
\end{equation}
For simplicity, we denote $$\Gamma^\infty_{\mathbb{R}^{n-1}}:=\Gamma_\infty. 
$$ 

In fact for $\kappa_\Gamma+1\leq k\leq n-1$, one can check that 
\begin{equation}
	\label{construction2-cones-infty}
	\begin{aligned}
		\Gamma^\infty_{\mathbb{R}^{k}}
		=\left\{(\lambda_1,\cdots,\lambda_{k})\in \mathbb{R}^{k}: (\lambda_1,\cdots,\lambda_{k},  {\overbrace{R,\cdots,R}^{(n-k)-\mathrm{entries}}}) \in\Gamma  \mbox{ for some } R>0 
		\right\}.  
	\end{aligned}       
\end{equation}
This construction was also considered in prequel \cite[Section 3]{yuan-PUE1}. 

For $1\leq k\leq n$  we denote 
\begin{equation}
	\label{def-P_k}
	\mathcal{P}_{k}	=\{(\lambda_1,\cdots,\lambda_{n}): \lambda_{i_1}+\cdots+\lambda_{i_{k}}>0, \,  \forall 1\leq i_1<\cdots<i_{k}\leq n\}. 
\end{equation} 
First  we can check that

\begin{lemma}
	\label{lemma1-rigidity}
	If $\Gamma=\mathcal{P}_k$ for some $1\leq k\leq n$, then $\kappa_{\Gamma}=k-1$ and
	$\varrho_{\Gamma}=k. $
	In particular, $\varrho_\Gamma=1+\kappa_\Gamma.$
\end{lemma}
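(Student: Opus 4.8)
The plan is to unwind the definitions of $\kappa_\Gamma$ and $\varrho_\Gamma$ directly for $\Gamma=\mathcal{P}_k$, using only the combinatorial description \eqref{def-P_k}.

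First I would compute $\kappa_\Gamma$. Recall $\kappa_\Gamma$ is the largest $m$ with the vector having $m$ zeros followed by $(n-m)$ ones lying in $\Gamma$. For $\Gamma=\mathcal{P}_k$, membership of a point means every $k$-fold subsum is strictly positive. For the vector $e^{(m)}:=(\overbrace{0,\dots,0}^{m},\overbrace{1,\dots,1}^{n-m})$, the minimal value of a $k$-fold subsum is obtained by picking as many zeros as possible: if $m\le k-1$ the smallest $k$-subsum picks all $m$ zeros and $k-m\ge 1$ ones, giving $k-m>0$; if $m\ge k$ one can pick $k$ zeros, giving subsum $0$, which fails the strict inequality. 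Hence $e^{(m)}\in\mathcal{P}_k$ iff $m\le k-1$, so $\kappa_{\mathcal{P}_k}=k-1$.

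Next I would compute $\varrho_\Gamma$, which by \eqref{varrho1-Gamma} is the constant with $(1,\dots,1,1-\varrho_\Gamma)\in\partial\Gamma$. For $\Gamma=\mathcal{P}_k$ and the vector $\lambda=(1,\dots,1,1-\varrho)$, a $k$-fold subsum either avoids the last coordinate, contributing $k>0$, or includes it, contributing $(k-1)+(1-\varrho)=k-\varrho$. Thus $\lambda\in\mathcal{P}_k$ iff $k-\varrho>0$, i.e.\ $\varrho<k$; and the closure condition $\lambda\in\bar{\mathcal{P}_k}$ with $\lambda$ on the boundary forces $k-\varrho=0$. Here I should double-check that $\bar{\mathcal{P}_k}=\{\lambda_{i_1}+\cdots+\lambda_{i_k}\ge 0,\ \forall\, i_1<\cdots<i_k\}$ and that $\partial\mathcal{P}_k$ is where some subsum vanishes while all are nonnegative — this is immediate since each of the finitely many linear functionals $\lambda\mapsto \lambda_{i_1}+\cdots+\lambda_{i_k}$ is continuous. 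Therefore $(1,\dots,1,1-k)\in\partial\mathcal{P}_k$, giving $\varrho_{\mathcal{P}_k}=k$. Combining the two computations yields $\varrho_\Gamma=k=(k-1)+1=1+\kappa_\Gamma$, as claimed.

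The only genuine point requiring a little care — the ``main obstacle,'' such as it is — is the boundary identification for $\varrho_\Gamma$: one must confirm that the vector $(1,\dots,1,1-k)$ lies in $\partial\mathcal{P}_k$ rather than merely outside $\mathcal{P}_k$. This follows because at $\varrho=k$ the subsums including the last coordinate equal $0$ (so the point is not interior) while all subsums are still $\ge 0$ (so the point is in the closure), and $\partial S=\bar S\setminus \mathrm{int}\,S$. Everything else is the routine ``pick the worst subsum'' argument, which I would state once and apply in both computations. I would also remark that this lemma dovetails with Remark~\ref{RK1-1} and Lemma~\ref{lemma1} (which says $\Gamma\subseteq\mathcal{P}_{\kappa_\Gamma+1}$), so that $\mathcal{P}_{\kappa_\Gamma+1}$ is the extremal cone for a given $\kappa_\Gamma$.
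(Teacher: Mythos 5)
Your proof is correct, and it is exactly the routine verification the paper intends: the paper states this lemma with "we can check that" and supplies no proof, so your direct computation of the extremal $k$-subsums (together with the careful identification of $\partial\mathcal{P}_k$ as the locus where some subsum vanishes while all remain nonnegative) is the same approach, just written out. The edge cases $k=1$ ($\mathcal{P}_1=\Gamma_n$) and $k=n$ ($\mathcal{P}_n=\Gamma_1$) also match the paper's remark that $\varrho_\Gamma=1$ iff $\Gamma=\Gamma_n$ and $\varrho_\Gamma=n$ iff $\Gamma=\Gamma_1$.
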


According to Lemma \ref{lemma1-preserving-kappa} we conclude the following proposition.
\begin{proposition}
	\label{lemma1-upper-rho}
For any  $\Gamma$, we have
$$\varrho_\Gamma\leq  \kappa_\Gamma+1,$$
	with equality if and only if
	$$\Gamma 
	=\mathcal{P}_{k} \mbox{ for some } 1\leq k\leq n.$$
	
	
\end{proposition}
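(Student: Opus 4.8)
The plan is to show that $\Gamma$ is squeezed between the positive cone $\Gamma_n$ and the model cone $\mathcal{P}_{\kappa_\Gamma+1}$ of \eqref{def-P_k}, and then to promote the resulting inequality to the full equality characterization by a convex‑geometry rigidity argument. Throughout write $k=\kappa_\Gamma+1$.

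For the inequality, I would first record the elementary monotonicity $\varrho_{\Gamma'}\le\varrho_{\Gamma''}$ valid whenever $\Gamma'\subseteq\Gamma''$ are open symmetric convex cones containing $\Gamma_n$: since $(1,\dots,1)$ is interior to $\Gamma'$ and $(1,\dots,1,1-\varrho_{\Gamma'})\in\partial\Gamma'$, convexity puts the entire open segment joining them inside $\Gamma'\subseteq\Gamma''$, so the ray $(1,\dots,1,1-t)$ stays in $\Gamma''$ for all $t<\varrho_{\Gamma'}$. By Lemma~\ref{lemma1} one has $\Gamma\subseteq\mathcal{P}_k$, so this monotonicity together with $\varrho_{\mathcal{P}_k}=k$ (Lemma~\ref{lemma1-rigidity}) gives $\varrho_\Gamma\le k=\kappa_\Gamma+1$. (Alternatively, one may iterate the projection of Lemma~\ref{lemma1-preserving-kappa} down to $\mathbb{R}^{\kappa_\Gamma+1}$, where the resulting cone is of type~2 and hence has $\varrho\le\dim$.) The ``if'' half of the equivalence is exactly Lemma~\ref{lemma1-rigidity}.

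It remains to prove that $\varrho_\Gamma=\kappa_\Gamma+1$ forces $\Gamma=\mathcal{P}_k$. In that case $\Gamma\subseteq\mathcal{P}_k$ and $\varrho_\Gamma=\varrho_{\mathcal{P}_k}=k$, hence $(1,\dots,1,1-k)\in\partial\Gamma$; by symmetry all its coordinate permutations $w_i:=(1,\dots,1)-ke_i$ lie in $\overline{\Gamma}$, and $\overline{\Gamma_n}\subseteq\overline{\Gamma}$. Thus $\overline{\Gamma}$ contains the convex cone $\mathcal{Q}$ generated by $e_1,\dots,e_n$ and $w_1,\dots,w_n$. The heart of the matter is the identity $\mathcal{Q}=\overline{\mathcal{P}_k}$. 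The inclusion $\subseteq$ is immediate, since each generator lies in $\overline{\mathcal{P}_k}$ (for $w_i$, the $k$ smallest entries sum to $(1-k)+(k-1)=0$). For $\supseteq$, take $\mu\in\mathcal{P}_k$ with $\mu_1\le\dots\le\mu_n$ (permute by symmetry) and seek $a_i,b_i\ge0$ with $\mu=\sum_i a_ie_i+\sum_i b_iw_i$; writing $B=\sum_ib_i$ this amounts to $a_i=\mu_i-B+kb_i\ge0$, i.e. $b_i\ge\max\{0,(B-\mu_i)/k\}$ with $\sum_ib_i=B$. Such $b_i$ exist for some $B>0$ exactly when $\min_{B>0}\big(\sum_i\max\{0,(B-\mu_i)/k\}-B\big)<0$; the function being minimized is convex and piecewise linear, nonincreasing on $(0,\mu_k]$ and nondecreasing on $[\mu_k,\infty)$, so its minimum is attained at $B=\mu_k>0$ with value $-\frac{1}{k}(\mu_1+\dots+\mu_k)$, which is negative precisely because $\mu\in\mathcal{P}_k$. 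Hence $\mathcal{P}_k\subseteq\mathcal{Q}$, so $\overline{\mathcal{P}_k}=\mathcal{Q}\subseteq\overline{\Gamma}\subseteq\overline{\mathcal{P}_k}$; taking interiors of these convex cones yields $\Gamma=\mathcal{P}_k$.

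The main obstacle is precisely the identification $\mathcal{Q}=\overline{\mathcal{P}_k}$, i.e. recognizing $\overline{\mathcal{P}_k}$ as the closed convex cone spanned by the positive orthant and the $S_n$‑orbit of $(1,\dots,1,1-k)$; once this is set up, the one‑dimensional minimization over $B$ that certifies it is routine. A secondary point needing care is that the generators $e_i$ and $w_i$ sit on $\partial\Gamma_n$ and $\partial\Gamma$ respectively, so the construction a priori only places $\mu$ in $\overline{\Gamma}$; one recovers $\mu\in\Gamma$ at the end using that $\mathcal{P}_k$ is open and $\Gamma=\mathrm{int}\,\overline{\Gamma}$.
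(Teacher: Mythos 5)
Your argument is correct, and the rigidity half follows a genuinely different route from the paper. The paper obtains both the inequality and the equality case through the iterated projections $\Gamma\to\Gamma_\infty\to\cdots\to\Gamma^\infty_{\mathbb{R}^{\kappa_\Gamma+1}}$ of Lemma \ref{lemma1-preserving-kappa} and \eqref{construction2-cones-infty}: these preserve $\kappa$ and do not decrease $\varrho$, the bottom cone in $\mathbb{R}^{\kappa_\Gamma+1}$ sits inside the half-space $\{\sum\lambda_j>0\}$, and when $\varrho_\Gamma=\kappa_\Gamma+1$ that bottom cone is forced to equal the half-space, whence $\Gamma=\mathcal{P}_{\kappa_\Gamma+1}$ (this last lifting step is left implicit in the paper). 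You instead work entirely in $\mathbb{R}^n$: you get the inequality from $\Gamma\subseteq\mathcal{P}_{\kappa_\Gamma+1}$ (Lemma \ref{lemma1}) together with the monotonicity of $\varrho$ under inclusion, and you get rigidity by exhibiting $\overline{\mathcal{P}_k}$ as the conical hull of $e_1,\dots,e_n$ and the $S_n$-orbit of $(1,\dots,1,1-k)$, certified by the one-parameter minimization over $B$; the value $g(\mu_k)=-\frac1k(\mu_1+\cdots+\mu_k)<0$ is exactly the defining inequality of $\mathcal{P}_k$, so the argument is tight. Your version is self-contained, makes the paper's final implication explicit, and needs only elementary convex geometry (plus the standard facts that a finitely generated cone is closed and that an open convex set equals the interior of its closure); the paper's version is shorter because the projection machinery is already in place and is reused for the other statements of Section \ref{sec1-cones}. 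One cosmetic point: the existence of admissible $b_i$ requires $\min_B g(B)\le 0$ rather than $<0$, but since you prove strict negativity this does not affect anything.
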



\begin{proof}
For  $\kappa_\Gamma=n-1$, the statement is obvious.
Next we assume   $\kappa_\Gamma\leq n-2$.
	It suffices to prove rigidity. 
	If $\varrho_\Gamma=\kappa_\Gamma+1$, then $\varrho_{\Gamma^\infty_{\mathbb{R}^{\kappa_\Gamma+1}}} =\kappa_\Gamma+1$. This implies 
	\[\Gamma^\infty_{\mathbb{R}^{\kappa_\Gamma+1}}=
	\left\{(\lambda_1,\cdots,\lambda_{1+\kappa_\Gamma}): \sum_{j=1}^{\kappa_\Gamma+1}\lambda_j>0 \right\},\]
	and then $\Gamma=\mathcal{P}_{\kappa_\Gamma+1}$.

\end{proof}

As a consequence,   
we may verify  condition \eqref{tau-alpha-4} in some case.

\begin{corollary}
	\label{coro1-verify}
	Given a cone $\Gamma$ with $\kappa_\Gamma\leq n-3$, we have $1+(n-2)\varrho_\Gamma^{-1}\geq 2$.
\end{corollary}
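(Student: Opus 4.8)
\textbf{Proof proposal for Corollary \ref{coro1-verify}.}

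The plan is to reduce the inequality $1+(n-2)\varrho_\Gamma^{-1}\geq 2$ to the single estimate $\varrho_\Gamma\leq n-2$, and then to obtain the latter from Proposition \ref{lemma1-upper-rho}. First I would rewrite the target inequality: since $\varrho_\Gamma>0$ (indeed $1\leq\varrho_\Gamma\leq n$ by the remark following \eqref{mainequ-02-0}), multiplying through by $\varrho_\Gamma$ shows that $1+(n-2)\varrho_\Gamma^{-1}\geq 2$ is equivalent to $(n-2)\geq\varrho_\Gamma$, i.e. to $\varrho_\Gamma\leq n-2$. So the whole corollary amounts to the bound $\varrho_\Gamma\leq n-2$ under the hypothesis $\kappa_\Gamma\leq n-3$.

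Next I would invoke Proposition \ref{lemma1-upper-rho}, which gives the universal bound $\varrho_\Gamma\leq\kappa_\Gamma+1$ for every open symmetric convex cone $\Gamma$ (with $\Gamma_n\subseteq\Gamma$). Under the hypothesis $\kappa_\Gamma\leq n-3$ this yields $\varrho_\Gamma\leq\kappa_\Gamma+1\leq(n-3)+1=n-2$, which is exactly what is needed. Combining this with the equivalence from the first step completes the argument. One small point worth recording explicitly in the write-up: because we divided by $\varrho_\Gamma$, we use $\varrho_\Gamma>0$, which is immediate from $\varrho_\Gamma\geq1$; and the chain of inequalities is strict nowhere, so no case distinction is required.

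I do not expect any genuine obstacle here, since the statement is essentially a one-line consequence of the already-established Proposition \ref{lemma1-upper-rho}; the only thing to be careful about is the direction of the algebraic manipulation (the map $\varrho\mapsto 1+(n-2)\varrho^{-1}$ is decreasing for $n\geq3$, so a smaller upper bound on $\varrho_\Gamma$ gives a larger lower bound on $1+(n-2)\varrho_\Gamma^{-1}$), and to make sure the hypothesis is used in the sharp form $\kappa_\Gamma+1\leq n-2$ rather than merely $\kappa_\Gamma\leq n-2$. If one wanted, one could also note as a remark that the inequality is an equality precisely when $\kappa_\Gamma=n-3$ and $\Gamma=\mathcal{P}_{n-2}$, again by the rigidity clause of Proposition \ref{lemma1-upper-rho}, though this refinement is not needed for the applications to \eqref{tau-alpha-4}.
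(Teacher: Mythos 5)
Your proof is correct and coincides with the paper's intended argument: the corollary is stated as an immediate consequence of Proposition \ref{lemma1-upper-rho}, via $\varrho_\Gamma\leq\kappa_\Gamma+1\leq n-2$ and the monotonicity of $\varrho\mapsto 1+(n-2)\varrho^{-1}$. Nothing further is needed.
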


Moreover, 
  together with Lemma 
   \ref{lemma1-rigidity} we may rewrite Lemma  \ref{lemma1}  as follows:
\begin{lemma}
	\label{lemma2}
	Suppose $\Gamma$ is a cone of $\kappa_{\Gamma}=k$ with $1\leq k\leq n-2$. Then 
	$\Gamma\subseteq\mathcal{P}_{k+1}.$
\end{lemma}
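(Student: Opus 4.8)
The statement to establish is that a cone $\Gamma$ with $\kappa_\Gamma = k$, where $1 \leq k \leq n-2$, satisfies $\Gamma \subseteq \mathcal{P}_{k+1}$. The plan is to derive this directly from Lemma \ref{lemma1}, which is stated earlier and asserts that for any $\lambda \in \Gamma$ one has $\lambda_{i_1} + \cdots + \lambda_{i_{\kappa_\Gamma+1}} > 0$ for every choice of indices $1 \leq i_1 < \cdots < i_{\kappa_\Gamma+1} \leq n$. The point is simply to recognize that this inequality, quantified over all such index sets, is exactly the defining condition of $\mathcal{P}_{\kappa_\Gamma + 1}$ as given in \eqref{def-P_k}.

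Concretely, I would argue as follows. Fix an arbitrary $\lambda = (\lambda_1, \dots, \lambda_n) \in \Gamma$. Since $\kappa_\Gamma = k$, Lemma \ref{lemma1} yields
\begin{equation}
\lambda_{i_1} + \cdots + \lambda_{i_{k+1}} > 0, \quad \forall\, 1 \leq i_1 < \cdots < i_{k+1} \leq n. \nonumber
\end{equation}
Comparing with the definition
\begin{equation}
\mathcal{P}_{k+1} = \{(\lambda_1, \dots, \lambda_n) : \lambda_{i_1} + \cdots + \lambda_{i_{k+1}} > 0,\ \forall\, 1 \leq i_1 < \cdots < i_{k+1} \leq n\}, \nonumber
\end{equation}
we conclude $\lambda \in \mathcal{P}_{k+1}$. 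As $\lambda \in \Gamma$ was arbitrary, this gives $\Gamma \subseteq \mathcal{P}_{k+1}$. The reference to Lemma \ref{lemma1-rigidity} in the surrounding text is there only to explain why $\mathcal{P}_{k+1}$ is the natural object to compare against (it is itself a cone with $\kappa_{\mathcal{P}_{k+1}} = k$, so the inclusion is sharp in the sense that both cones share the same $\kappa$-value), but it is not needed for the proof of the inclusion itself.

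There is essentially no obstacle here: the entire content has been placed in Lemma \ref{lemma1}, and the present lemma is a reformulation obtained by unwinding the definition of $\mathcal{P}_{k+1}$. The only thing to be careful about is the indexing convention — one must check that the constraint $1 \leq k \leq n-2$ is precisely what makes $k+1 \leq n-1 < n$, so that $\mathcal{P}_{k+1}$ is a proper (non-trivial) cone and the index sets of size $k+1$ are genuine proper subsets; this is consistent with $\Gamma$ being of type 1, i.e. $\kappa_\Gamma \leq n-2$, which is exactly the regime in which Lemma \ref{lemma1} has nontrivial content beyond $\Gamma \subseteq \Gamma_1$.
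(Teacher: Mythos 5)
Your proof is correct and is exactly what the paper intends: Lemma \ref{lemma2} is presented there as a direct restatement of Lemma \ref{lemma1} in the notation of \eqref{def-P_k}, with no further argument given. Your observation that Lemma \ref{lemma1-rigidity} only serves to contextualize the sharpness of the inclusion, rather than being needed for it, is also consistent with the paper.
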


Then we  
revise
a question raised in \cite{yuan-PUE1}.

\begin{problem}
	Let $\Gamma$ be a cone of $\kappa_{\Gamma}=k$ with $1\leq k\leq n-2$. 
For such $\Gamma$, is $\Gamma_{n-k}\subseteq \Gamma$ correct? If so then 
	$\varrho_\Gamma  \geq 
	 \frac{n}{n-\kappa_{\Gamma}}.$
		
	
\end{problem}

From Subsection \ref{subsec1-construction-admissiblemetric} it is interesting to compute $\varrho_{\tilde{\Gamma}}$  or give a lower bound. 

\begin{proposition}	\label{lemma1-lower-rho}
	Given a cone $\Gamma$ and a constant $\varrho\leq \varrho_\Gamma$ with $\varrho\neq0$, we assume  $\tilde{\Gamma}$ is the corresponding cone as in \eqref{map1}. Then we have
	\begin{enumerate}
		\item If $\varrho<0$ then $\tilde{\Gamma}$ is of type 2 and $\varrho_{\tilde{\Gamma}}
		=\varrho_\Gamma+\frac{\varrho_\Gamma(n-\varrho_\Gamma)}{\varrho_\Gamma-\varrho}$.
		
		\item If $0<\varrho\leq\varrho_{\Gamma}$ then
		\begin{itemize}
			\item When $\Gamma$ is of type 1, $\varrho_{\tilde{\Gamma}}=n-\varrho.        $ In particular, $\varrho_{\tilde{\Gamma}}\geq n-\varrho_\Gamma\geq n-\kappa_\Gamma-1$. 
			
			\item When $\Gamma$ is of type 2, $\varrho_{\tilde{\Gamma}}>n-\varrho.$ 
			In particular, $\varrho_{\tilde{\Gamma}}+\varrho_\Gamma> n$.
		\end{itemize}
	\end{enumerate}
\end{proposition}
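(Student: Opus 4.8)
The plan is to compute $\varrho_{\tilde\Gamma}$ by tracking the hyperplane $(1,\dots,1,1-\varrho_\Gamma)\in\partial\Gamma$ through the linear change of variables defining $\tilde\Gamma$. Recall from \eqref{map1} that $\lambda\in\tilde\Gamma$ iff the point $\mu$ with $\mu_i=\frac{1}{n-\varrho}\big(\sum_j\lambda_j-\varrho\lambda_i\big)$ lies in $\Gamma$, and that $\sum_i\mu_i=\sum_i\lambda_i$. The key computation is: if $\lambda=(1,\dots,1,s)$ then $\mu_i=\frac{1}{n-\varrho}(n-1+s-\varrho)$ for $i<n$ and $\mu_n=\frac{1}{n-\varrho}(n-1+s-\varrho s)=\frac{1}{n-\varrho}(n-1-(\varrho-1)s)$. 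So the image of $(1,\dots,1,s)$ is a positive multiple of $(1,\dots,1,t)$ where, after normalizing by the first coordinate, $1-t = \frac{\varrho(s-1)}{n-1+s-\varrho}$, i.e.\ $1-t$ and $1-s$ have the same sign precisely when $\varrho>0$ (and opposite sign when $\varrho<0$), and the map $s\mapsto t$ is a Möbius transformation on the line $\lambda_1=\dots=\lambda_{n-1}=1$. First I would make this Möbius correspondence explicit and note that $(1,\dots,1,t)\in\bar\Gamma$ iff $(1,\dots,1,s)\in\partial\tilde\Gamma$ (using that the linear map carries $\bar\Gamma$ to $\overline{\tilde\Gamma}$ and preserves faces).

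Next I would split into the two cases. When $\varrho<0$: since $n-\varrho>n$, one checks directly from the formula that the boundary ray $(1,\dots,1,1-\varrho_\Gamma)\in\partial\Gamma$ pulls back to a point $(1,\dots,1,s_0)$ of $\partial\tilde\Gamma$ with $s_0>1$; feeding $t=1-\varrho_\Gamma$ into the Möbius relation and solving for $s_0$ gives $1-s_0 = -\big(\varrho_\Gamma + \frac{\varrho_\Gamma(n-\varrho_\Gamma)}{\varrho_\Gamma-\varrho}\big)$, hence $\varrho_{\tilde\Gamma}=\varrho_\Gamma+\frac{\varrho_\Gamma(n-\varrho_\Gamma)}{\varrho_\Gamma-\varrho}$. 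Because the sign reversal sends $t<1$ to $s>1$, the whole boundary half-line in the $t$-variable with $t\le 1-\varrho_\Gamma$ lands on the opposite side, which forces $(0,\dots,0,1)\in\tilde\Gamma$, i.e.\ $\tilde\Gamma$ is of type 2 — or, more cleanly, I would invoke Lemma \ref{lemma5.11} together with the observation that $\tilde\Gamma\ni(1,\dots,1,1-\varrho_{\tilde\Gamma})$ with $\varrho_{\tilde\Gamma}>n$, forcing $\tilde\Gamma_\infty=\mathbb R^{n-1}$. When $0<\varrho\le\varrho_\Gamma$: here $0<n-\varrho<n$. The sign-preserving Möbius relation, applied to the extreme boundary point $t=1-\varrho_\Gamma$, gives a corresponding $s_0$, but I must also track the \emph{other} end of $\partial\Gamma$. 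Using $\Gamma\subseteq\Gamma_1$, the cone $\Gamma$ contains all $(1,\dots,1,s)$ with $s>1-\varrho_\Gamma$; its preimage is all $(1,\dots,1,t)$ with $t>$ (the $s_0$-image), and one computes that as $s\to+\infty$ the image $t\to \frac{\varrho-(\varrho-1)}{1} = $ a finite limit, namely $t\to 1-(n-\varrho)$ after the normalization — this is the extra boundary ray that did not exist in $\Gamma$, and it shows $(1,\dots,1,1-(n-\varrho))\in\overline{\tilde\Gamma}$. Combined with the fact that $(1,\dots,1,t)\in\tilde\Gamma$ for $t$ just below $1$, this pins $\varrho_{\tilde\Gamma}=n-\varrho$ when no \emph{interior} boundary point of $\Gamma$ interferes — which is exactly the type 1 situation (no $(1,\dots,1,s)\in\Gamma$ for $s\le 1-\varrho_\Gamma$ strictly, and $(0,\dots,0,1)\notin\Gamma$), while in the type 2 case $\Gamma$ contains points with $\lambda_n$ arbitrarily negative along other coordinate configurations, which after the transformation strictly enlarges $\tilde\Gamma$ past the hyperplane $\{(1,\dots,1,1-(n-\varrho))\}$, giving $\varrho_{\tilde\Gamma}>n-\varrho$, hence $\varrho_{\tilde\Gamma}+\varrho_\Gamma>n$. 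The stated "in particular" bounds then follow from $\varrho_\Gamma\le\kappa_\Gamma+1$ (Proposition \ref{lemma1-upper-rho}) and $\varrho\le\varrho_\Gamma$.

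The main obstacle I anticipate is the type 1 versus type 2 dichotomy in case (2): proving that in the type 1 case the value $n-\varrho$ is actually attained and not exceeded requires showing that $\partial\tilde\Gamma$ in the direction $(1,\dots,1,t)$ is governed \emph{exactly} by the two families traced above (the image of $\partial\Gamma$ near $(1,\dots,1,1-\varrho_\Gamma)$ and the new limiting ray at infinity), with nothing from the rest of $\partial\Gamma$ cutting in closer to the origin. This is really a statement that the supporting hyperplane of $\Gamma$ at $(1,\dots,1,1-\varrho_\Gamma)$ transforms to the supporting hyperplane of $\tilde\Gamma$ at the relevant boundary point, which I would establish by a direct convexity/duality argument: the linear substitution is invertible, so it carries supporting hyperplanes to supporting hyperplanes, and one just has to identify which face of $\Gamma$ is responsible. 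In the type 2 case the strict inequality will come from openness of $\Gamma$ together with $\Gamma_\infty=\mathbb R^{n-1}$: one exhibits an explicit point of $\Gamma$ whose image lies strictly inside $\tilde\Gamma$ on the far side of the hyperplane $\{(1,\dots,1,1-(n-\varrho))\}$, and concludes by convexity of $\tilde\Gamma$ that the whole segment to $(0,\dots,0,1)$'s neighborhood is included, pushing $\varrho_{\tilde\Gamma}$ strictly above $n-\varrho$.
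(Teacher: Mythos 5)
Your overall strategy --- push distinguished rays through the explicit linear change of variables \eqref{map1} and read off $\varrho_{\tilde{\Gamma}}$ --- is the same as the paper's, and your formula in case (1) and the value $n-\varrho$ in case (2) are correct. But the justification of ``$\tilde{\Gamma}$ is of type 2'' in case (1) has a genuine gap. Your ``cleaner'' route claims $\varrho_{\tilde{\Gamma}}>n$; this is impossible, since $\tilde{\Gamma}\subseteq\Gamma_1$ forces $\varrho_{\tilde{\Gamma}}\leq n$, and indeed your own formula gives $\varrho_{\tilde{\Gamma}}<n$ whenever $\varrho<0$ and $\varrho_\Gamma<n$. (Type 2 is not detected by the size of $\varrho_{\tilde{\Gamma}}$: Corollary \ref{corollary1-example-type2} exhibits type 2 cones with any $\varrho_{\tilde{\Gamma}}\in(1,n)$.) Your primary route is not a proof either: the M\"obius analysis lives entirely on the line $\{(1,\dots,1,s)\}$, and $(0,\dots,0,1)$ does not lie on that line, so no ``sign reversal'' along it can show $(0,\dots,0,1)\in\tilde{\Gamma}$. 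The fix is a one-line computation: $(0,\dots,0,1)\in\tilde{\Gamma}$ iff its image $\frac{1}{n-\varrho}(1,\dots,1,1-\varrho)$ under $\mu_i=\frac{1}{n-\varrho}(\sum_j\lambda_j-\varrho\lambda_i)$ lies in $\Gamma$, which holds because $1-\varrho>1>1-\varrho_\Gamma$ when $\varrho<0$.

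Case (2) is correct in substance but needlessly convoluted, and your anticipated ``main obstacle'' dissolves with the paper's argument. Instead of reaching the ray through $(0,\dots,0,1)$ as a limit $s\to+\infty$ (where, incidentally, you swap the roles of $s$ and $t$: it is $s\to 1-(n-\varrho)$ as $t\to\infty$, not the other way around), simply substitute $\mu=(0,\dots,0,1)\in\bar{\Gamma}$ into the change of variables to get $\frac{1}{\varrho}(1,\dots,1,1-(n-\varrho))\in\overline{\tilde{\Gamma}}$. Since the invertible linear map is a homeomorphism carrying $\partial\Gamma$ onto $\partial\tilde{\Gamma}$, type 1 (i.e.\ $(0,\dots,0,1)\in\partial\Gamma$) puts this point on $\partial\tilde{\Gamma}$, and the line $\{(1,\dots,1,s)\}$ meets $\partial\tilde{\Gamma}$ in exactly one point by convexity, so $\varrho_{\tilde{\Gamma}}=n-\varrho$ with no supporting-hyperplane discussion; type 2 puts it in the open cone, giving the strict inequality. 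Finally, the sign relation you state ($1-t$ and $1-s$ have the same sign iff $\varrho>0$) contradicts your own displayed formula $1-t=\varrho(s-1)/(n-1+s-\varrho)$; this slip does not propagate to your final formulas, but it should be corrected.
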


\begin{proof}
	
	Case 1: $\varrho<0$.
	Let $\mu=(1,\cdots,1,1-\varrho_\Gamma)\in\partial \Gamma$. The corresponding vector
	\begin{equation}
		\begin{aligned}
			\lambda=\frac{\varrho-\varrho_\Gamma}{\varrho}(1,\cdots,1,1-\frac{\varrho_\Gamma(n-\varrho)}{\varrho_\Gamma-\varrho})\in\partial\tilde{\Gamma}. \nonumber
		\end{aligned}
	\end{equation}
	
	Case 2: $0<\varrho\leq\varrho_\Gamma$. Set $\mu=(0,\cdots,0,1)\in\bar \Gamma$.   Accordingly we have
	\begin{equation}
		\begin{aligned}
			\lambda=\frac{1}{\varrho}(1,\cdots,1, 1-(n-\varrho))\in\overline{\tilde{\Gamma}}. \nonumber
		\end{aligned}
	\end{equation}
	
	
	If $\Gamma$ is of type 1, then $\varrho_{\tilde{\Gamma}}=n-\varrho.$
	So  $\varrho_{\tilde{\Gamma}}\geq n-\varrho_\Gamma\geq n-\kappa_\Gamma-1$ by Proposition \ref{lemma1-upper-rho}.  
	In particular, when $\tilde{\Gamma}$ is of type 2 (if and only if $\varrho<\varrho_\Gamma$), we obtain  $	\varrho_{\tilde{\Gamma}}>n-\varrho. $ 
	
	If $\Gamma$ is of type 2, then $	\varrho_{\tilde{\Gamma}}>n-\varrho. $  
	
\end{proof}

\begin{corollary}
	Fix a cone $\Gamma$.
	Let  $\varrho$ be as in  \eqref{beta-gamma-A-3}, and assume $\varrho\leq \varrho_\Gamma$.  
As in  \eqref{map1} we obtain $\tilde{\Gamma}$. We have $\varrho_{\tilde{\Gamma}}\geq 2$, provided either one of the following hods.
	\begin{enumerate}
		\item 	$(\alpha,\tau)$ obeys \eqref{tau-alpha-4}.
		
		\item 
		$\tau>1$ and 
		$\kappa_\Gamma\leq n-3$.
	\end{enumerate}
	
\end{corollary}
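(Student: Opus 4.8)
The plan is to deduce the statement directly from Proposition \ref{lemma1-lower-rho} together with the explicit formulas for $\gamma$ and $\varrho$ recorded in \eqref{beta-gamma-A-3}. Recall $\varrho=\frac{n-2}{\tau-1}$, so the sign of $\varrho$ is governed by the sign of $\tau-1$. I would split the argument along the case distinction in Proposition \ref{lemma1-lower-rho}, i.e. according to whether $\varrho<0$ (which, given the hypotheses \eqref{tau-alpha-sharp} and \eqref{beta-gamma-A-3}, corresponds to $\tau<1$, hence $\alpha=-1$) or $0<\varrho\leq\varrho_\Gamma$ (which corresponds to $\tau>1$).

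\textbf{Case (1): $(\alpha,\tau)$ obeys \eqref{tau-alpha-4}.} First suppose $\alpha=-1$, so $\tau\leq 2-\tfrac{2}{\varrho_\Gamma}$. Since $\varrho_\Gamma\geq 1$ we have $2-\tfrac{2}{\varrho_\Gamma}\leq 2$; but more to the point, I would check whether $\tau<1$ or $1<\tau\leq 2-\tfrac{2}{\varrho_\Gamma}$ can occur and handle each via the corresponding branch of Proposition \ref{lemma1-lower-rho}. For $\tau<1$ we are in the $\varrho<0$ branch, where $\varrho_{\tilde\Gamma}=\varrho_\Gamma+\frac{\varrho_\Gamma(n-\varrho_\Gamma)}{\varrho_\Gamma-\varrho}$; since $1\leq\varrho_\Gamma\leq n$ and $\varrho<0<\varrho_\Gamma$, each term is $\geq 0$ and the first is $\geq 1$, so one needs to extract the extra unit — I expect one uses $\varrho_\Gamma\geq 1$ plus the contribution of the second term, or directly that the condition $\tau\leq 2-\tfrac2{\varrho_\Gamma}$ forces $-\varrho=\frac{n-2}{1-\tau}$ to be large enough. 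For $\tau>1$ (possible when $\alpha=-1$ only if $1<2-\tfrac2{\varrho_\Gamma}$, i.e. $\varrho_\Gamma>2$), we are in branch (2) with $0<\varrho\leq\varrho_\Gamma$, and $\varrho_{\tilde\Gamma}\geq n-\varrho$; substituting $\varrho=\frac{n-2}{\tau-1}$ and using $\tau\leq 2-\tfrac2{\varrho_\Gamma}$ i.e. $\tau-1\leq 1-\tfrac2{\varrho_\Gamma}=\frac{\varrho_\Gamma-2}{\varrho_\Gamma}$, one gets $\varrho\geq\frac{(n-2)\varrho_\Gamma}{\varrho_\Gamma-2}$, and then $\varrho_{\tilde\Gamma}\geq n-\varrho$ must be combined with the fact that in the type-2 subcase the inequality is strict — actually here I would instead use the other available bound $\varrho_{\tilde\Gamma}\geq n-\varrho_\Gamma$ or $\geq n-\kappa_\Gamma-1$ together with $\kappa_\Gamma\leq n-3$ when that holds; the bookkeeping needs care. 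When $\alpha=1$, \eqref{tau-alpha-4} gives $\tau\geq 2$, hence $0<\varrho=\frac{n-2}{\tau-1}\leq n-2$, and by Proposition \ref{lemma1-lower-rho}(2), $\varrho_{\tilde\Gamma}\geq n-\varrho\geq n-(n-2)=2$, which is exactly the claim.

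\textbf{Case (2): $\tau>1$ and $\kappa_\Gamma\leq n-3$.} Here $\Gamma$ is of type 1 (indeed $\kappa_\Gamma\leq n-3<n-2$), so by Proposition \ref{lemma1-lower-rho}(2) we have the clean bound $\varrho_{\tilde\Gamma}\geq n-\varrho_\Gamma\geq n-\kappa_\Gamma-1$, the last step by Proposition \ref{lemma1-upper-rho}. Since $\kappa_\Gamma\leq n-3$, this gives $\varrho_{\tilde\Gamma}\geq n-(n-3)-1=2$, as desired. (Note $\tau>1$ is used only to guarantee $\varrho>0$, placing us in branch (2) rather than branch (1); the hypothesis $\varrho\leq\varrho_\Gamma$ is assumed in the statement and is what makes $\tilde\Gamma$ well-defined with the stated properties.) The main obstacle I anticipate is the $\alpha=-1$ part of Case (1): one must check that the two sub-ranges $\tau<1$ and $1<\tau\leq 2-\tfrac2{\varrho_\Gamma}$ together with the corresponding formulas from Proposition \ref{lemma1-lower-rho} each yield $\varrho_{\tilde\Gamma}\geq 2$, and the $\tau<1$ branch requires showing $\varrho_\Gamma+\frac{\varrho_\Gamma(n-\varrho_\Gamma)}{\varrho_\Gamma-\varrho}\geq 2$ using precisely the constraint $\tau\leq 2-\tfrac2{\varrho_\Gamma}$ rewritten as a lower bound on $-\varrho$; this is the one spot where a short but non-obvious algebraic manipulation is needed rather than a direct appeal to an earlier result.
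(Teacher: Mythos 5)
Your overall strategy---read everything off Proposition \ref{lemma1-lower-rho} together with $\varrho=\tfrac{n-2}{\tau-1}$ from \eqref{beta-gamma-A-3}, using Proposition \ref{lemma1-upper-rho} for the bound $\varrho_\Gamma\leq\kappa_\Gamma+1$---is exactly how the paper intends the corollary to be read (it is stated without proof, immediately after Proposition \ref{lemma1-lower-rho}). Your Case (2) and the $\alpha=1$ half of Case (1) are complete and correct. The genuine gap is the $\alpha=-1$ half of Case (1): you leave the key step as "a short but non-obvious algebraic manipulation," and the hint you give for it points in the wrong direction. You say the constraint $\tau\leq 2-\tfrac{2}{\varrho_\Gamma}$ should be "rewritten as a lower bound on $-\varrho$" and that it "forces $-\varrho$ to be large enough." It is the opposite: in the branch $\varrho<0$ the quantity $\varrho_{\tilde{\Gamma}}=\varrho_\Gamma+\tfrac{\varrho_\Gamma(n-\varrho_\Gamma)}{\varrho_\Gamma-\varrho}$ is \emph{decreasing} in $-\varrho$ (a larger $-\varrho$ enlarges the denominator), so a lower bound on $\varrho_{\tilde{\Gamma}}$ requires an \emph{upper} bound on $-\varrho$.

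Here is the missing computation. If $\varrho_\Gamma\geq 2$, the claim is immediate since $1\leq\varrho_\Gamma\leq n$ and $\varrho<0$ make the second term nonnegative, so $\varrho_{\tilde{\Gamma}}\geq\varrho_\Gamma\geq 2$. If $\varrho_\Gamma<2$, then $\tau\leq 2-\tfrac{2}{\varrho_\Gamma}$ gives $1-\tau\geq\tfrac{2-\varrho_\Gamma}{\varrho_\Gamma}>0$, hence
\begin{equation}
	-\varrho=\frac{n-2}{1-\tau}\leq\frac{(n-2)\varrho_\Gamma}{2-\varrho_\Gamma},
	\qquad
	\varrho_\Gamma-\varrho\leq\varrho_\Gamma+\frac{(n-2)\varrho_\Gamma}{2-\varrho_\Gamma}=\frac{\varrho_\Gamma(n-\varrho_\Gamma)}{2-\varrho_\Gamma}, \nonumber
\end{equation}
and therefore $\tfrac{\varrho_\Gamma(n-\varrho_\Gamma)}{\varrho_\Gamma-\varrho}\geq 2-\varrho_\Gamma$, which yields $\varrho_{\tilde{\Gamma}}\geq\varrho_\Gamma+(2-\varrho_\Gamma)=2$. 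As for the sub-range $1<\tau\leq 2-\tfrac{2}{\varrho_\Gamma}$ with $\alpha=-1$ that you flag as needing care: it is vacuous in context, since \eqref{tau-alpha-4} is imposed on top of \eqref{tau-alpha-sharp}, which for $\alpha=-1$ forces $\tau<1$; and even ignoring that, $\tau>1$ and $\tau\leq 2-\tfrac{2}{\varrho_\Gamma}$ give $\varrho\geq\tfrac{(n-2)\varrho_\Gamma}{\varrho_\Gamma-2}$, which together with $\varrho\leq\varrho_\Gamma\leq n$ forces $\varrho=\varrho_\Gamma=n$, the degenerate case $\Gamma=\Gamma_1$ excluded by \eqref{assumption-4}. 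With these two points supplied, your proof closes.
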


However, 
one  could not expect that one can obtain  an effective estimate for all type 2 cones as shown by the following:


\begin{corollary}
	\label{corollary1-example-type2}
	For  any   $1<t<n$, there is a type 2 cone $\tilde{\Gamma}$
	with  $\varrho_{\tilde{\Gamma}}=t.$ 
\end{corollary}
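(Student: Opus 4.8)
The goal is to realize any prescribed value $t\in(1,n)$ as $\varrho_{\tilde\Gamma}$ for some type 2 cone $\tilde\Gamma$. The plan is to produce $\tilde\Gamma$ not abstractly but as the image cone in \eqref{map1} of a conveniently chosen pair $(\Gamma,\varrho)$, and then read off $\varrho_{\tilde\Gamma}$ directly from Proposition \ref{lemma1-lower-rho}. The cleanest starting point is $\Gamma=\Gamma_1=\mathcal P_1$, for which $\varrho_\Gamma=n$ (this is the extremal case recorded in the remark after \eqref{mainequ-02-0}), and which is itself of type 2 when $n\geq 2$ in the sense relevant here; more importantly, for $\Gamma=\Gamma_1$ every value $\varrho<\varrho_\Gamma=n$ is admissible in \eqref{assumption-4}.

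First I would split according to the sign of $\varrho$. For $0<\varrho<n=\varrho_{\Gamma_1}$: since $\Gamma_1$ is of type 2, case (2) of Proposition \ref{lemma1-lower-rho} gives $\varrho_{\tilde\Gamma}>n-\varrho$, which by itself is only an inequality. So instead I would compute $\varrho_{\tilde\Gamma}$ exactly: the boundary vector $(1,\dots,1,1-n)\in\partial\Gamma_1$ maps under the inverse of the linear change in \eqref{map1} to a scalar multiple of $(1,\dots,1,1-t)$ for the appropriate $t$, exactly as carried out in the proof of Proposition \ref{lemma1-lower-rho}; tracking constants, one finds $t$ is a strictly increasing continuous function of $\varrho$ on the relevant interval, sweeping out an interval of values of $\varrho_{\tilde\Gamma}$ that is strictly larger than $n-\varrho$. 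For $\varrho<0$: case (1) of Proposition \ref{lemma1-lower-rho} with $\Gamma=\Gamma_1$ gives the explicit formula $\varrho_{\tilde\Gamma}=\varrho_\Gamma+\frac{\varrho_\Gamma(n-\varrho_\Gamma)}{\varrho_\Gamma-\varrho}=n$ (since $n-\varrho_\Gamma=0$), which is not what we want from this base cone, so for the small-$t$ regime I would instead take a base cone $\Gamma$ with $1<\varrho_\Gamma<n$ — e.g. $\Gamma=\Gamma_k$ with $\varrho_{\Gamma_k}=n/k$ from Remark \ref{RK1-1}, or $\Gamma=\mathcal P_k$ with $\varrho_{\mathcal P_k}=k$ from Lemma \ref{lemma1-rigidity} — and vary $\varrho$ over negative values to make $\varrho_{\tilde\Gamma}=\varrho_\Gamma+\frac{\varrho_\Gamma(n-\varrho_\Gamma)}{\varrho_\Gamma-\varrho}$ range continuously over $(\varrho_\Gamma,n)$. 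Combining a family of such base cones, every target $t\in(1,n)$ is attained; and in every case $\varrho<\varrho_\Gamma$, so by Lemma \ref{lemma5.11} / the type-2 characterization inside Proposition \ref{lemma1-lower-rho} the resulting $\tilde\Gamma$ is of type 2.

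The remaining check is that $\tilde\Gamma$ really is an \emph{open symmetric convex} cone — but this is exactly the assertion already verified in the discussion preceding \eqref{def-f} ("We can check that $\tilde\Gamma$ is also an open symmetric convex cone in $\mathbb R^n$"), valid for any $\varrho\neq 0$ with $\varrho<n$, so nothing new is needed there. The only genuinely fiddly step is bookkeeping the affine normalization: one must confirm that the vector obtained by applying the map $\mu\mapsto\lambda$ of \eqref{map1} to a chosen boundary point of $\Gamma$ is a \emph{positive} multiple of $(1,\dots,1,1-t)$ (so that it lies on $\partial\tilde\Gamma$ and computes $\varrho_{\tilde\Gamma}$ rather than its reflection), and that the resulting function $\varrho\mapsto t$ is continuous and surjective onto the claimed range. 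I expect that to be the main obstacle — not conceptually hard, but the place where a sign error or a misidentified boundary ray would break the argument — and I would handle it by just writing the two explicit one-parameter formulas (one for $\varrho>0$, one for $\varrho<0$) and invoking the intermediate value theorem.
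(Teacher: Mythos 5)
Your second branch (negative $\varrho$ applied to a base cone with small $\varrho_\Gamma$) is exactly the paper's proof: the paper takes $\Gamma=\Gamma_n=\mathcal{P}_1$, so $\varrho_{\Gamma_n}=1$, and for the unique $\varrho<0$ with $\frac{n-\varrho}{1-\varrho}=t$ the formula of Proposition \ref{lemma1-lower-rho}(1) gives $\varrho_{\tilde\Gamma}=1+\frac{n-1}{1-\varrho}=t$, sweeping out all of $(1,n)$ from this single base cone; case (1) also certifies that $\tilde\Gamma$ is of type 2. So your proposal does reach the goal, and by essentially the same route.

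However, your first branch (base cone $\Gamma_1$ with $0<\varrho<n$) is vacuous and its claimed conclusion is false. The change of variables in \eqref{map1} preserves the trace, $\sum_i\lambda_i=\sum_i\mu_i$, so the half-space $\Gamma_1=\{\sum_i\mu_i>0\}$ is a fixed point of the construction: $\tilde\Gamma=\Gamma_1$ for every admissible $\varrho$, hence $\varrho_{\tilde\Gamma}=n$ identically, and no value in the open interval $(1,n)$ is ever attained. Your own ``fiddly step'' is where this shows up: the image of $(1,\dots,1,1-n)\in\partial\Gamma_1$ is $-\frac{n-\varrho}{\varrho}(1,\dots,1,1-n)$, a \emph{negative} multiple for $0<\varrho<n$, i.e.\ the reflected ray; the boundary ray of $\tilde\Gamma$ of the normalized form $(1,\dots,1,1-s)$ comes instead from $\mu=(-1,\dots,-1,n-1)\in\partial\Gamma_1$ and has $s=n$. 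So ``$t$ is a strictly increasing function of $\varrho$ sweeping out an interval'' fails there. Since the negative-$\varrho$ branch with $\Gamma=\mathcal{P}_1$ already covers all of $(1,n)$, the fix is simply to delete the $\Gamma_1$ branch and run the second branch with $k=1$.
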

\begin{proof}
	Given $1<t<n$, there is a unique $\varrho<0$ so that $\frac{ n-\varrho}{1-\varrho}=t$. For such $\varrho$  we get a type 2 cone
	$$\tilde{\Gamma}=\left\{(\lambda_1,\cdots,\lambda_n)\in\mathbb{R}^n: \sum_{j=1}^n \lambda_j -\varrho\lambda_i >0, \, 1\leq i\leq n\right\}.$$ 
	By Proposition \ref{lemma1-lower-rho}, $\varrho_{\tilde{\Gamma}} 
	=1+\frac{ n-1}{1-\varrho}=t$. 
	
\end{proof}

 \medskip
 \subsubsection*{Acknowledgements} 
 The author  wishes to express his gratitude to Professor Yi Liu for 
 answering questions related to the proof of Lemma \ref{lemma-diff-topologuy}.
The author also  wishes to   thank Ze Zhou for useful discussion on the homogeneity lemma.


 \bigskip
 


\begin{thebibliography}{99}
	
	
	\bibitem{Aubin1976}  T. Aubin,   
	\textit{\'Equations diff\'erentielles non lin\'eaires et probl\`eme de Yamabe concernant la courbure scalaire}, {J. Math. Pures  Appl.} {\bf 55}  (1976),  269--296.
	
	\bibitem{Aviles1988McOwen2}
	P. Aviles and R. McOwen,
	{\em Conformal deformation to constant negative scalar curvature on noncompact Riemannian manifolds},
	{J. Differential Geom.} {\bf 27} (1988), 225--239.
	
	\bibitem{Aviles1988McOwen}
	P. Aviles and R. McOwen,
	\textit{Complete conformal metrics with negative scalar curvature in compact Riemannian manifolds},
	{Duke Math. J.} {\bf 56} (1988),  395--398. 


 \bibitem{Besse1987} A. Besse, {Einstein manifolds}. Classics in Mathematics. Springer-Verlag, Berlin, 2008.
 
\bibitem{CNS3}
L. Caffarelli, L. Nirenberg and J. Spruck, \textit{The Dirichlet problem for nonlinear second-order elliptic equations III: Functions of eigenvalues of the Hessians},
{Acta Math.} {\bf 155} (1985),  261--301.

\bibitem{FuShengYuan}
J.-X. Fu, W.-M. Sheng and L.-X. Yuan,
{\em Prescribed $k$-curvature problems on complete noncompact Riemannian manifolds}, 
Int. Math. Res. Not.   
{2020}, no. 23, 
9559--9592.

   \bibitem{Guan2007AJM} B. Guan, \textit{Conformal metrics with prescribed curvature functions on manifolds with boundary}, Amer. J. Math. {\bf129} (2007),  915--942.

\bibitem{Guan2008IMRN}
B. Guan,  \textit{Complete conformal metrics of negative Ricci curvature on compact manifolds with boundary}, Int. Math. Res. Not. IMRN 2008, Art. ID rnn 105, 25 pp.

 
 
 \bibitem{Gursky-Streets-Warren2010}
 M. Gursky, J. Streets and M. Warren,
 \textit{Conformally bending three-manifolds with boundary}, Ann. Inst. Fourier (Grenoble) {\bf60} (2010), 
 2421--2447. 

\bibitem{Gursky-Streets-Warren2011}
M. Gursky, J. Streets  and M. Warren,
\textit{Existence of complete conformal metrics of negative Ricci curvature on manifolds with boundary},
Calc. Var.  
PDE. {\bf41} (2011),  21--43. 


\bibitem{Gursky2003Viaclovsky} 	M. Gursky  and J.  Viaclovsky, \textit{Fully nonlinear equations on Riemannian manifolds with negative curvature}, Indiana Univ. Math. J. {\bf52} (2003),   399--419. 


\bibitem{Harvey2012Lawson-Adv} F. Harvey and H. Lawson, {\em Geometric plurisubharmonicity and convexity: an introduction,}
Adv. Math. {\bf230} (2012), 
2428--2456.

\bibitem{Harvey2013Lawson-IUMJ} F. Harvey and H. Lawson, 
{\em p-convexity, p-plurisubharmonicity and the Levi problem}, Indiana Univ. Math. J. {\bf62} (2013), 
149--169.


\bibitem{Jin1988}
Z.-R. Jin, 
{A counterexample to the Yamabe problem for complete noncompact manifolds}.  
Lecture Notes in Math. vol. 1306, pp. 93--101, Springer-Verlag, Berlin and New York, 1988.

\bibitem{Jin1993}
Z.-R. Jin, 
{\em Prescribing scalar curvatures on the conformal classes of complete metrics with negative curvature}, 
{Trans. Amer. Math. Soc.} {\bf 340} (1993), 785--810.

\bibitem{Li2011Sheng}
Q.-R. Li and W.-M. Sheng,
\textit{Some Dirichlet problems arising from conformal geometry}, Pacific J. Math. 
{\bf 251} (2011), 337--359.



\bibitem{Loewner1974Nirenberg}
C. Loewner  and L. Nirenberg, {Partial differential equations invariant under conformal or projective transformations}. Contributions to analysis, pp.
245--272. Academic Press, New York, 1974. 

 \bibitem{Milnor-1997}
J. W. Milnor,  Topology from the differentiable viewpoint.  Princeton University Press, Princeton, NJ, 1997.

\bibitem{Schoen1984} R. Schoen,  {\em Conformal deformation of a Riemannian metric to constant scalar curvature}, {J. Differential Geom.} {\bf20}  (1984), 479--495.


\bibitem{Trudinger1968} N. Trudinger, 
{\em Remarks concerning the conformal deformation of Riemannian structures on compact manifolds},
Ann. Scuola Norm. Sup. Pisa Cl. Sci. (3) {\bf22}  (1968), 265--274.
 
  
  \bibitem{yuan-PUE1}
 R.-R. Yuan, 
 {\em The partial uniform ellipticity and prescribed problems on the conformal classes of complete metrics},
 arXiv:2203.13212.  
  
  \end{thebibliography}
\end{document}